\documentclass[11pt, a4paper]{amsart}
\usepackage[left=1in, right=1in]{geometry}
\usepackage{bbm}
\usepackage{float, graphicx}
\usepackage[]{epsfig}
\usepackage{amsmath, amsthm, amssymb, mathscinet}
\allowdisplaybreaks
\usepackage{epsfig}
\usepackage{verbatim}
\usepackage{multicol}
\usepackage{url}
\usepackage{latexsym}
\usepackage{mathrsfs}
\usepackage[colorlinks, bookmarks=true]{hyperref}
\usepackage{graphicx}
\usepackage{enumerate}
\usepackage[normalem]{ulem}
\usepackage{bm}
\usepackage{dsfont}
\usepackage{stmaryrd}
\usepackage{enumitem}
\usepackage{soul}
\usepackage{color}
\usepackage{xcolor}
\usepackage{hyperref}
\usepackage{indentfirst}
\usepackage[capitalise]{cleveref}

\numberwithin{equation}{section}
\usepackage{mathtools}

\newcommand{\bN}{{\mathbb{N}}}
\newcommand{\bR}{{\mathbb R}}
\newcommand{\bS}{{\mathbb S}}

\newcommand{\bE}{{\mathbb E}}

\DeclareMathOperator{\Airy}{Ai}

\def\<{\langle}
\def\>{\rangle}

\def\Lamp[#1]{\boldsymbol{\Lambda}_{\mathrm{AMP}}^{(#1)}}
\def\lalg[#1]{\Lambda_{\mathrm{alg}, #1}}

\def\de{{\rm d}}

\def\RR{\mathbb{R}}
\def\SS{\mathbb{S}}

\def\calI{\mathcal{I}}
\def\calJ{\mathcal{J}}

\def\calW{\mathcal{W}}

\newcommand{\R}{\mathbb{R}}

\newcommand{\eps}{\varepsilon}

\newcommand{\dist}{\operatorname{dist}}

\newcommand{\norm}[1]{\left\lVert#1\right\rVert}
\newcommand{\RN}[1]{%
  \textup{\uppercase\expandafter{\romannumeral#1}}%
}

\newcommand{\RNum}[1]{\uppercase\expandafter{\romannumeral #1\relax}}

\theoremstyle{plain} 
\newtheorem{theorem}{Theorem}[section]
\newtheorem*{theorem*}{Theorem}
\newtheorem{lemma}[theorem]{Lemma}
\newtheorem*{lemma*}{Lemma}

\newtheorem*{corollary*}{Corollary}
\newtheorem{proposition}[theorem]{Proposition}
\newtheorem*{proposition*}{Proposition}

\newtheorem*{assumption*}{Assumption}

\newtheorem*{definition*}{Definition}

\newtheorem*{example*}{Example}
\newtheorem{remark}[theorem]{Remark}

\newtheorem*{remark*}{Remark}
\newtheorem*{remarks*}{Remarks}

\title[clamped plate resembles membrane]{A vibrating clamped plate resembles a vibrating membrane at high frequency}
\date{}

\author{Or Kuperman}
\address{(OK) Einstein Institute of Mathematics, Edmond J. Safra Campus, The Hebrew University of Jerusalem, Jerusalem 9190401, Israel} 
\email{or.kuperman@mail.huji.ac.il}

\author{Zhengjiang Lin}
\address{(ZL) Department of Mathematics, Massachusetts Institute of Technology, 77 Massachusetts Ave, 02139 Cambridge MA, USA} 
\email{linzj@mit.edu}

\author{Dan Mangoubi}
\address{(DM) Einstein Institute of Mathematics, Edmond J. Safra Campus, The Hebrew University of Jerusalem, Jerusalem 9190401, Israel} 
\email{dan.mangoubi@mail.huji.ac.il}

\begin{document}

\begin{abstract}
    We study the clamped plate eigenvalue problem. We show that clamped plate eigenfunctions can be approximated in the $L^2$-sense by their oscillatory components, which solve the Helmholtz equation, with an error that decays exponentially fast in the frequency on any compact subdomain. Additionally, we establish an upper bound on the boundary localization of clamped plate eigenfunctions analogous to the one observed for membrane eigenfunctions.
\end{abstract}

\maketitle

\section{Introduction}
We consider the eigenvalue problem for a clamped plate under tension in a bounded smooth domain $\Omega\subseteq\bR^d$. For clarity of presentation, we first describe the case without tension:
\begin{align}\label{eq:clamped-plate}
\begin{cases}
    \Delta^2 u = \lambda u & \text{in } \Omega, \\
    u = \partial_\nu u = 0 & \text{on } \partial\Omega,
\end{cases}
\end{align}
where $\partial_\nu$ is the outward unit normal. A classical motivation for studying \eqref{eq:clamped-plate} comes from the celebrated experiments of Chladni \cite{chladni}. When a metal plate sprinkled with fine particles is set into vibration, the particles migrate toward the regions of zero displacement and form striking patterns, known as Chladni figures (see \Cref{fig:chladni 1}). These patterns naturally motivate the study of the nodal set of a plate eigenfunction. In the membrane case, namely for Laplacian eigenfunctions, the structure of nodal sets has been extensively studied in recent years; see, for instance, \cite{bruning1978, yau-problem1982, donnelly-fefferman1988, logunov-malinnikova-2-3,logunov-lower, logunov-upper,MR4514975}. By contrast, much less is known about the nodal sets of clamped plates, even though the clamped plate problem is the classical mathematical model underlying Chladni's experiments. Indeed, in Kirchhoff--Love thin-plate theory, bending stiffness leads to a fourth-order equation rather than a second-order one \cite{L88,L44,rayleigh}. After separation of variables, the transverse vibration is governed by a bi-Laplacian eigenvalue problem such as~\eqref{eq:clamped-plate}. Nevertheless, simulated Laplacian nodal patterns and experimental Chladni figures can be remarkably similar (cp.\ \Cref{fig:chladni 1} and \Cref{fig:laplacian 1}). Although the present paper does not directly address nodal sets, its purpose is to provide a quantitative explanation for the similarity of these two models.

To begin with, observe that on a closed manifold the maximum principle, or alternatively the spectral theorem, shows that the bi-Laplacian eigenfunctions are the same as Laplacian eigenfunctions. In other words, in the absence of a boundary the plate problem coincides with the membrane problem. In the presence of a boundary, however, the clamped conditions $u = \partial_\nu u = 0$ impose two boundary constraints and prevent such an immediate identification. The main point of the present paper is that, nonetheless, the high-frequency clamped-plate problem remains extremely close to a second-order membrane problem.


\begin{figure}[ht]
\centering
\includegraphics[width=0.4\textwidth]{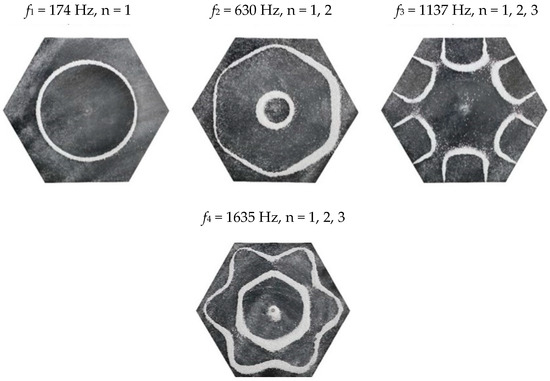}
    \caption{Chladni figures: nodal line patterns observed in a $14.5$ cm hexagon plate (Figure 11 in \cite{BC24}).}    \label{fig:chladni 1}
\end{figure}


More precisely, any clamped plate eigenfunction $u_{\lambda}$ of eigenvalue~$\lambda$ admits a decomposition into an oscillating component $u_{\lambda, o}$, which solves the Helmholtz equation with a positive eigenvalue, and an exponential component $u_{\lambda, e}$, which solves the Helmholtz equation with a negative eigenvalue. We write
\begin{align*}
u_{\lambda} = u_{\lambda, o}+u_{\lambda, e} ,
\end{align*}
where $u_{\lambda, o} = \frac{1}{2}\bigl(u_{\lambda} - \lambda^{-1/2} \Delta u_{\lambda}\bigr)$ and $u_{\lambda, e} = \frac{1}{2}\bigl(u_{\lambda} + \lambda^{-1/2} \Delta u_{\lambda}\bigr)$
yielding
\begin{equation*}
-\Delta u_{\lambda, o} =\lambda^{1/2} u_{\lambda, o} \qquad \text{ and } \qquad
-\Delta u_{\lambda, e} =-\lambda^{1/2} u_{\lambda, e
}.
\end{equation*}
As mentioned above, on closed manifolds $u_{\lambda, e}=0$. In this paper, we prove that $u_{\lambda, e}$ remains small in an $L^2$-sense even in the presence of a boundary. Orthogonality (see Lemma~\ref{lem:orthogonal u v}) shows that
\[\|u_\lambda\|^2_{L^2(\Omega)}=\|u_{\lambda, e}\|^2_{L^2(\Omega)}+\|u_{\lambda, o}\|^2_{L^2(\Omega)}\ .\]
Our first main result gives a quantitatively decaying bound on the exponential component.
\begin{theorem}
    Let $\Omega\subseteq \R^d$ be a bounded $C^4$-domain. Let $u_\lambda$ be any clamped plate eigenfunction of eigenvalue~$\lambda$. We have
        \begin{align*}
            \|u_{\lambda, e}\|^2_{L^2(\Omega)}\leq C \lambda^{-1/4}\|u_{\lambda}\|^2_{L^2(\Omega)}.
        \end{align*}
        where $C=C(\Omega)$ is a positive constant which depends on~$\Omega$ and is independent of~$\lambda$.
\end{theorem}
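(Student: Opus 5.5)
Throughout write $k=\lambda^{1/4}$, $u=u_\lambda$, $v=u_{\lambda,o}$, $w=u_{\lambda,e}$, so that $-\Delta v=k^2 v$, $\Delta w=k^2 w$, and normalize $\|u\|_{L^2(\Omega)}=1$. The clamped conditions give $v=-w$ and $\partial_\nu v=-\partial_\nu w$ on $\partial\Omega$; consequently all tangential derivatives also agree up to sign, $\nabla_T v=-\nabla_T w$. The plan has three steps: a boundary-layer estimate for $w$, a Rellich identity that controls the boundary trace of $w$, and energy bounds of order $k^2$.

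\emph{Step 1 (boundary layer).} I will show that a solution of $\Delta w=k^2 w$ satisfies
\[
\|w\|^2_{L^2(\Omega)}\le C k^{-1}\|w\|^2_{L^2(\partial\Omega)}.
\]
Since $\Delta(w^2)=2|\nabla w|^2+2k^2w^2\ge 2k^2 w^2$, the function $w^2$ is a subsolution. I would integrate it against a weight $\phi$ that is a smoothed version of $e^{-k\,\dist(x,\partial\Omega)}$. On a collar of width $\sim 1$, $C^2$-regularity of the distance function gives $\Delta\phi\le (1+o(1))k^2\phi$ there, and the weight is exponentially small beyond the collar. Green's formula then leaves boundary terms $\int_{\partial\Omega}(\phi\,\partial_\nu w^2-w^2\partial_\nu\phi)$. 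Alternatively, one can write $w$ against the explicit boundary-layer profile and compare it with the Dirichlet problem for $\Delta-k^2$, using the maximum principle. I expect this step to be technical but standard.

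\emph{Step 2 (Rellich identity).} Let $X$ be a $C^1$ vector field with $X=\nu$ on $\partial\Omega$. For $-\Delta v=k^2 v$, the identity $\operatorname{div}\bigl(2(Xv)\nabla v-|\nabla v|^2X+k^2v^2X\bigr)=\dots$ gives
\[
\int_{\partial\Omega}\bigl(|\partial_\nu v|^2-|\nabla_Tv|^2+k^2v^2\bigr)=O\bigl(\|\nabla v\|^2+k^2\|v\|^2\bigr).
\]
For $w$ the same identity holds with $k^2$ replaced by $-k^2$. The boundary data match up to sign, so subtracting the two identities cancels the derivative terms and leaves
\[
2k^2\int_{\partial\Omega}w^2\le C\bigl(\|\nabla v\|^2+k^2\|v\|^2+\|\nabla w\|^2+k^2\|w\|^2\bigr).
\]
This cancellation is exactly where the two clamped conditions are used.

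\emph{Step 3 (energy bounds).} I need the right-hand side above to be $O(k^2)$. For this I use Green's identities. For $w$,
\[
\int_\Omega|\nabla w|^2+k^2\int_\Omega w^2=\int_{\partial\Omega}w\,\partial_\nu w.
\]
For $v$,
\[
\int_\Omega|\nabla v|^2-k^2\int_\Omega v^2=\int_{\partial\Omega}v\,\partial_\nu v,
\]
and the right-hand sides coincide. Together with the orthogonality $\|v\|^2+\|w\|^2=1$ (Lemma~\ref{lem:orthogonal u v}), this gives $\|\nabla w\|^2+2k^2\|w\|^2\le\|\nabla v\|^2$.

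It therefore remains to bound $\|\nabla v\|^2\lesssim k^2$. Since $\nabla v=\tfrac12(\nabla u-k^{-2}\nabla\Delta u)$, it suffices to bound $\|\nabla u\|^2\le\|u\|\,\|\Delta u\|=k^2$ and $\|\nabla\Delta u\|^2\lesssim k^6$. I expect this last bound to be the main obstacle, because $\Delta u$ carries no boundary condition. I would first try interior elliptic estimates combined with the Rellich-type bound itself in an absorbing (bootstrap) fashion. Failing that, I would use $H^3$ regularity of the clamped problem, $\|u\|_{H^3}\lesssim\|\Delta^2u\|_{H^{-1}}+\|u\|$, combined with interpolation between $H^2$ and $H^4$.

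\emph{Conclusion.} Granting the bound $\|\nabla v\|^2\lesssim k^2$, Steps 2 and 3 give $\int_{\partial\Omega}w^2\le C$. Step 1 then yields
\[
\|u_{\lambda,e}\|^2_{L^2(\Omega)}\le Ck^{-1}=C\lambda^{-1/4},
\]
which is the claim.
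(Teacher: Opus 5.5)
Your route works and differs from the paper's at both main stages, though two details need repair.

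\textbf{Comparison with the paper.} For the trace bound $\int_{\partial\Omega}w^2\le C\|u\|^2$, the paper uses a single Rellich-type identity for the fourth-order operator, $\int_{\partial\Omega}(\Delta u)^2=-\int_\Omega u\,[L,A]u$ with $A=\eta\,\partial_s$. It controls the fourth-order commutator by the full $H^4$ bound $\|\nabla^4u\|^2\lesssim\beta^4\|u\|^2$.

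You instead write the second-order Rellich identities for $v$ and $w$ separately and subtract. The clamped conditions cancel the $(\partial_\nu\cdot)^2-|\nabla_T\cdot|^2$ terms. Your Green-identity comparison $\|\nabla w\|^2+k^2\|w\|^2=\|\nabla v\|^2-k^2\|v\|^2$ then reduces everything to $\|\nabla v\|^2\lesssim k^2$, i.e.\ to an $H^3$ bound on $u$. This makes the role of both boundary conditions transparent and needs one derivative less.

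In practice the $H^3$ bound is obtained exactly as in \Cref{lem:w42}: $H^4$ regularity for the Dirichlet problem for $\Delta^2$, plus Gagliardo--Nirenberg interpolation with $\|\nabla^2u\|^2\lesssim k^4$, gives $\|\nabla^3u\|^2\lesssim k^6$. So your fallback is the right choice and no bootstrap is needed. Note that the crude bound $\|\Delta^2u\|_{H^{-1}}\le\lambda\|u\|$ would only give $k^8$.

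For the passage from $\partial\Omega$ to the interior, the paper shows that $f_e(s)=\int_{\partial\Omega_s}u_e^2$ decays like $e^{-s\sqrt\beta}$, via a second-order differential inequality from the first variation formula. It then treats $\Omega_{\delta_0/4}$ by subharmonicity (\Cref{lem:subharmonic-estimate}). This gives decay on every level set, which the paper also needs for \Cref{thm:approx-and-decay-intro} and for the ball example. Your weighted identity yields only the integrated bound, but that is all this statement requires.

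\textbf{Repairs.} In Step 1, with $\phi=1$ and $\partial_\nu\phi=k$ on $\partial\Omega$, the boundary term contains $2\int_{\partial\Omega}w\,\partial_\nu w$. This is not controlled by $\int_{\partial\Omega}w^2$: for tangentially oscillating data, $|\partial_\nu w|$ can be much larger than $k|w|$. Step 3 only bounds it by $O(k^2)$, which is useless here. Substitute $\int_{\partial\Omega}w\,\partial_\nu w=\int_\Omega(|\nabla w|^2+k^2w^2)$ to get
\[
k\int_{\partial\Omega}w^2\,\de\sigma=\int_\Omega 2(1-\phi)\bigl(|\nabla w|^2+k^2w^2\bigr)+w^2\Delta\phi\,\de x\ \ge\ (k^2-Ck)\int_\Omega w^2\,\de x .
\]
The inequality uses $\phi\le1$ and $\Delta\phi=(k^2-k\Delta d)\phi$ on the collar, up to exponentially small errors from the cutoff. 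Equivalently, use the weight $1-\phi$, which vanishes on $\partial\Omega$.

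In Step 3, your inequality $\|\nabla w\|^2+2k^2\|w\|^2\le\|\nabla v\|^2$ would require $\|w\|\le\|v\|$, which is not known a priori. What actually follows is $\|\nabla v\|^2=\|\nabla w\|^2+k^2$, and that is all you use.
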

Indeed, we show that 
$\|u_{\lambda, e}\|_{L^2(\partial\Omega)}$
is bounded in terms of $\|u\|_{L^2(\Omega)}$ and 
 if we let 
 \begin{equation}
 \label{eq:omegas}
     \Omega_s=\{x\in\Omega|\dist(x, \partial\Omega)>s\}
\end{equation}
 then $\|u\|_{L^2(\partial\Omega_s)}$ and  $\|u\|_{L^2(\Omega_s)}$ decay exponentially with the frequency.
 \begin{theorem}
 \label{thm:approx-and-decay-intro}
  Let $\Omega$ be as above. For any $u_{\lambda}$ as above
    \begin{align*}
             \|u_{\lambda, e}\|^2_{L^2(\partial\Omega)} \leq C \|u_{\lambda}\|^2_{L^2(\Omega)}.
    \end{align*}
    Moreover,
     \begin{align*}
             \|u_{\lambda, e}\|^2_{L^2(\partial\Omega_s)} \leq Ce^{-\frac{1}{2}\lambda^{1/4} s} \|u_{\lambda}\|^2_{L^2(\Omega)},
    \end{align*} 
      and
      \[\|u_{\lambda, e}\|^2_{L^2(\Omega_s)}\leq C\lambda^{-1/4}e^{-\frac{1}{2}\lambda^{1/4}s}\|u_{\lambda}\|^2_{L^2(\Omega)},\]
         for $0\leq s<\delta$, where $\delta=\delta(\Omega)$ is small enough. 
\end{theorem}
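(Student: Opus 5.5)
Throughout I write $k=\lambda^{1/4}$, $v=u_{\lambda,e}$, $w=u_{\lambda,o}$, so that $\Delta v=k^2v$, $\Delta w=-k^2w$ and $\Delta u_\lambda=k^2(v-w)$. The clamped conditions give $v=-w$ on $\partial\Omega$ and $\nabla u_\lambda=0$ on $\partial\Omega$. Consequently, on $\partial\Omega$,
\[\Delta u_\lambda=2k^2 v=\partial_\nu^2u_\lambda .\]
The plan has two independent parts: a boundary estimate, obtained from a Rellich identity for the plate, and an exponential decay estimate, obtained from energy/ODE arguments for $v$ on the level sets $\partial\Omega_s$.

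\emph{Boundary bound.} Choose a $C^3$ vector field $X$ on $\overline\Omega$ with $X=\nu$ on $\partial\Omega$. I will use the Rellich--Pohozaev identity obtained by multiplying $\Delta^2u_\lambda=\lambda u_\lambda$ by $X\cdot\nabla u_\lambda$ and integrating by parts twice.
\begin{itemize}
\item Since $\nabla u_\lambda=0$ on $\partial\Omega$, every boundary term except one vanishes. The surviving term is $\pm\frac12\int_{\partial\Omega}(X\cdot\nu)|\Delta u_\lambda|^2$.
\item The interior terms are bounded by $C\int_\Omega\bigl(|\nabla^2u_\lambda|^2+|\nabla u_\lambda||\nabla\Delta u_\lambda|+\lambda u_\lambda^2\bigr)$.
\item By elliptic regularity for the clamped problem on a $C^4$ domain (interpolating $\|u_\lambda\|_{H^j}\le Ck^j\|u_\lambda\|$ for $0\le j\le 4$), these interior terms are $O(k^4\|u_\lambda\|^2)$.
\end{itemize}
This gives $\int_{\partial\Omega}|\Delta u_\lambda|^2\le Ck^4\|u_\lambda\|^2$. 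Inserting $\Delta u_\lambda=2k^2v$ yields $\|v\|^2_{L^2(\partial\Omega)}\le C\|u_\lambda\|^2$, which is the first claim. The same regularity estimate also gives the total-energy bound
\[\int_\Omega\bigl(|\nabla v|^2+k^2v^2\bigr)\le Ck^2\|u_\lambda\|^2 ,\]
which I will need below.

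\emph{Decay on level sets.} For $0\le s<\delta$, with $\delta$ below the normal injectivity radius, the sets $\partial\Omega_s$ are $C^3$ hypersurfaces and $\mathrm{dist}(\cdot,\partial\Omega)$ is smooth there. Set
\[f(s)=\int_{\partial\Omega_s}v^2,\qquad E(s)=\int_{\Omega_s}\bigl(|\nabla v|^2+k^2v^2\bigr)=\int_{\partial\Omega_s}v\,\partial_nv ,\]
where $n$ is the outward normal of $\Omega_s$ and the last equality uses $\Delta v=k^2v$.
\begin{itemize}
\item Differentiating along the inward normal flow gives $f'(s)=-2\int_{\partial\Omega_s}v\,\partial_nv+O(f)=-2E(s)+O(f(s))$, where the $O(f)$ term comes from mean curvature.
\item A trace inequality via the divergence theorem applied to $v^2X$ gives $f(s)\le\frac1k(1+C/k)E(s)$.
\item Together these yield $f'\le-(2k-C)f$. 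Hence $f(s)\le e^{-ks}f(0)$ for $k$ large, which combined with the boundary bound gives the second claim with room to spare beyond the exponent $\frac12k$.
\end{itemize}

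\emph{Interior bound.} This is the step I expect to be the main obstacle, because the coarea formula is only valid in the collar, where $\mathrm{dist}(\cdot,\partial\Omega)$ is smooth.
\begin{itemize}
\item \emph{Collar.} For the part of $\Omega_s$ inside the collar, the coarea formula gives
\[\int_{\Omega_s\setminus\Omega_\delta}v^2=\int_s^\delta f\le k^{-1}e^{-ks}f(0).\]
\item \emph{Deep interior.} For $\Omega_\delta$ I would use that $E$ is decreasing with $-E'(s)=\int_{\partial\Omega_s}(|\nabla v|^2+k^2v^2)\ge 2kE(s)$, by AM--GM applied to $E=\int_{\partial\Omega_s}v\,\partial_nv$. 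Hence $E(\delta)\le e^{-2k\delta}E(0)\le Ck^2e^{-2k\delta}\|u_\lambda\|^2$, and so
\[\int_{\Omega_\delta}v^2\le k^{-2}E(\delta)\le Ce^{-2k\delta}\|u_\lambda\|^2.\]
\end{itemize}
For $s<\delta/2$ (shrinking $\delta$ if needed), $e^{-2k\delta}\le k^{-1}e^{-ks}$ once $k$ is large, so this term is absorbed into $Ck^{-1}e^{-ks/2}\|u_\lambda\|^2$. That proves the third claim. Small $k$ is handled trivially by enlarging $C$.
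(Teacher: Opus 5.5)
Your proof is correct. The boundary step matches the paper's, but the decay and interior steps take a different route.

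\textbf{Boundary bound.} The paper also uses a Rellich-type identity with the multiplier $\eta(s)\partial_s u$, an extension of $-\partial_\nu u$. It writes this compactly as $\int_{\partial\Omega}(\Delta u)^2\,\de\sigma=-\int_\Omega u\,[L,A]u\,\de x$ and bounds $\|[L,A]u\|_{L^2}$ through the $H^4$ estimates. Your classical Pohozaev form is the same mechanism. It distributes derivatives more evenly, so $H^2$--$H^3$ bounds suffice.

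\textbf{Level-set decay.} The paper computes the second variation, $f_e''\ge 1.8\beta f_e$. It pairs this with the crude bound $f_e'\le Cf_e$ and runs an ODE contradiction argument, which needs room up to $\delta_0/2$, to reach $f_e'\le-\sqrt{\beta}f_e$ on $[0,\delta_0/4]$. You combine the first variation $f'=-2E+O(f)$ with the trace inequality $f\le k^{-1}(1+C/k)E$. This gives $f'\le-(2k-C)f$ directly. Your argument is shorter, and it yields the essentially sharp rate $e^{-(2k-C)s}$ rather than $e^{-ks}$.

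\textbf{Deep interior.} The paper uses subharmonicity of $u_e^2$ and a mean-value lemma to dominate $\int_{\Omega_{\delta_0/4}}u_e^2$ by the collar integral. That step needs no further a priori bounds. Your energy monotonicity $-E'\ge 2kE$ gives the exponentially small bound $Ce^{-2k\delta}\|u\|^2$ outright. Its cost is the global estimate $E(0)\le Ck^2\|u\|^2$, which relies on the $H^3$ bound; the paper has that bound available anyway.

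\textbf{Trade-off.} The paper's second-order, convexity-style treatment of $f_e$ parallels the log-convexity argument it uses for $f_o$ in the delocalization section. Your route is more elementary and quantitatively stronger.
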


In the next two theorems we show delocalization properties of the clamped plate eigenfunctions near the boundary. 
In the case of membranes one knows that $\lambda$-eigenfunctions cannot localize near the boundary at a scale of~$o(\lambda^{-1/3})$ as $
\lambda\to\infty$~(see e.g. \cite[Lemma 3.2]{hassell-tao-mrl} and \cite{hassell-tao-mrl-erratum}). It turns out that a similar delocalization holds also for clamped plate $\lambda$-eigenfunctions.
\begin{theorem}[Delocalization near the boundary]
\label{thm:delocalization intro}
     Let $\Omega$ be as above.  Let $u_\lambda$ be as above. For any $\gamma \in (0,1)$, let $s(\lambda) = \gamma \lambda^{-1/6}$. Then
 \begin{align*}
            \|u_{\lambda}\|^2_{L^2(\Omega\setminus\Omega_{s(\lambda)})}\leq C \gamma  \|u_{\lambda}\|^2_{L^2(\Omega)}
        \end{align*}
        for some $C=C(\Omega)>0$.
\end{theorem}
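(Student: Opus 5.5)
We split $u_\lambda = u_{\lambda,o}+u_{\lambda,e}$ and treat the two components separately on the collar $\Omega\setminus\Omega_{s}$, with $s=s(\lambda)=\gamma\lambda^{-1/6}$. Write $k=\lambda^{1/4}$, so that $-\Delta u_{\lambda,o}=k^2u_{\lambda,o}$. By the orthogonality of Lemma~\ref{lem:orthogonal u v} and the triangle inequality,
\[
\|u_\lambda\|^2_{L^2(\Omega\setminus\Omega_s)}\le 2\|u_{\lambda,o}\|^2_{L^2(\Omega\setminus\Omega_s)}+2\|u_{\lambda,e}\|^2_{L^2(\Omega\setminus\Omega_s)} .
\]
The exponential part is easy. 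Integrating the bound of Theorem~\ref{thm:approx-and-decay-intro} on $\partial\Omega_t$ over $t\in(0,s)$ via the coarea formula ($|\nabla \dist|=1$ near the boundary) gives
\[
\|u_{\lambda,e}\|^2_{L^2(\Omega\setminus\Omega_s)}\le C\int_0^\infty e^{-\frac12 kt}\,dt\,\|u_\lambda\|^2\le C\lambda^{-1/4}\|u_\lambda\|^2 .
\]
Since $\gamma<1$, we have $\lambda^{-1/4}\le \lambda^{-1/6}$, and in the regime $\gamma\ge\lambda^{-1/12}$ this is $\le \gamma$. The complementary regime $\gamma<\lambda^{-1/12}$ is handled by the same slab argument used below for the oscillatory part, now applied to $u_\lambda$ itself. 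So the real content is a bound of the form $\|u_{\lambda,o}\|^2_{L^2(\Omega\setminus\Omega_s)}\le C\gamma\|u_\lambda\|^2$.

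For the oscillatory component we follow the Hassell--Tao strategy of a Rellich identity combined with a one-dimensional Hardy/Poincaré argument in the normal direction. First, a Rellich identity with a vector field $X$ extending the normal $\nu$ yields control of the Neumann-type boundary energy. Pairing $-\Delta u_o=k^2u_o$ with $Xu_o$ gives
\[
\int_{\partial\Omega}\bigl(|\partial_\nu u_o|^2-|\nabla_T u_o|^2+k^2|u_o|^2\bigr)\le C\bigl(\|\nabla u_o\|^2+k^2\|u_o\|^2\bigr)\le Ck^2\|u_\lambda\|^2 .
\]
The boundary data of $u_o$ is not free, because the clamped conditions give $u_o=-u_e$ and $\partial_\nu u_o=-\partial_\nu u_e$ on $\partial\Omega$. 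Hence $\|u_o\|_{L^2(\partial\Omega)}\le C\|u\|$ by Theorem~\ref{thm:approx-and-decay-intro}. The tangential term can also be controlled through $u_e$: since $u_e$ solves $\Delta u_e=k^2u_e$ and is exponentially concentrated in a layer of width $k^{-1}$, interior and boundary elliptic estimates give $\|\nabla_Tu_e\|_{L^2(\partial\Omega)}$ and $\|\partial_\nu u_e\|_{L^2(\partial\Omega)}$ of size $\lesssim k\|u\|$. Altogether we aim for $\int_{\partial\Omega}|\partial_\nu u_o|^2\le Ck^2\|u\|^2$, in close analogy with the Dirichlet estimate $\|\partial_\nu\phi\|^2_{L^2(\partial\Omega)}\lesssim\lambda_{\mathrm{Lap}}$ used for membranes.

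Next, in boundary normal coordinates $(y,t)\in\partial\Omega\times(0,\delta)$, we write $f(t)=u_o(y,t)$ and use the fundamental theorem of calculus twice:
\[
f(t)=f(0)+tf'(0)+\int_0^t(t-\tau)f''(\tau)\,d\tau ,
\]
where $f''$ is expressed through the equation as $-k^2u_o$ minus tangential Laplacian and curvature terms. Squaring and integrating over $y\in\partial\Omega$ and $t\in(0,s)$ gives the first two contributions:
\[
\|u_o\|^2_{L^2(\Omega\setminus\Omega_s)}\lesssim s\|u_o\|^2_{\partial\Omega}+s^3\|\partial_\nu u_o\|^2_{\partial\Omega}+(\text{second-order term}).
\]
With $s=\gamma k^{-2/3}$, the term $s^3k^2$ equals $\gamma^3$, and $s\le\gamma$, so both are acceptable. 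This cubic scaling is exactly where the exponent $-1/6$ comes from.

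The main obstacle is the second-order term: a naive bound $s^4\int(k^2|u_o|+|\Delta_Tu_o|)^2$ is far too large. We would handle it as Hassell--Tao do. We do not bound $f''$ crudely. Instead we apply a Rellich/commutator identity on the collar with the weighted vector field $X=(s-t)_+\partial_t$, which gives directly
\[
\int_{\Omega\setminus\Omega_s}\bigl(k^2|u_o|^2-|\nabla_Tu_o|^2+|\partial_tu_o|^2\bigr)\lesssim s\int_{\partial\Omega}\bigl(|\partial_\nu u_o|^2+k^2|u_o|^2\bigr)+\text{curvature errors}.
\]
This bound is combined with a one-dimensional Poincaré inequality in $t$ to absorb the tangential gradient. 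If that proves delicate, we would fall back on citing \cite[Lemma 3.2]{hassell-tao-mrl}, adapted to inhomogeneous boundary data $u_o|_{\partial\Omega}=-u_e|_{\partial\Omega}$. The extra boundary terms are bounded via Theorem~\ref{thm:approx-and-decay-intro} and contribute $O(s)\|u\|^2\le O(\gamma)\|u\|^2$.
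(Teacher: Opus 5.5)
Your splitting $u_\lambda=u_{\lambda,o}+u_{\lambda,e}$ is the same as the paper's, and your bound on the exponential part works. The case split in $\gamma$, and the unexplained ``slab argument applied to $u_\lambda$ itself'', are unnecessary. Since $f_e(t)=\int_{\partial\Omega_t}u_e^2\,\de\sigma_t\le C\|u\|^2_{L^2(\Omega)}$ for small $t$, you get directly $\|u_e\|^2_{L^2(\Omega\setminus\Omega_s)}=\int_0^s f_e(t)\,\de t\le Cs\|u\|^2_{L^2(\Omega)}\le C\gamma\|u\|^2_{L^2(\Omega)}$.

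The genuine gap is in the oscillatory part, at exactly the step you call the main obstacle and then leave open. The Rellich identity with $X=(s-t)_+\partial_t$ controls only the collar integral of $k^2u_o^2-|\nabla_Tu_o|^2+(\partial_tu_o)^2$, and this integrand is not coercive in $u_o$. Take the ball eigenfunctions $u_{m,o}$ from Theorem~\ref{thm:whispering gallery}, where $w_{\mu,1}=\mu+O(\mu^{1/3})$. For them, $\int_{\partial\Omega_t}|\nabla_Tu_o|^2=k^2\bigl(1+O(k^{-2/3})\bigr)\int_{\partial\Omega_t}u_o^2$ for $t\lesssim k^{-2/3}$. So your identity says essentially nothing about $\int_{\Omega\setminus\Omega_s}u_o^2$ in precisely the case that saturates the theorem. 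A one-dimensional Poincar\'e inequality in $t$ bounds $u_o$ by $\partial_tu_o$ and its trace; it cannot absorb a tangential gradient. Your pointwise Taylor expansion in $t$ at fixed $y$ has the same defect: the cancellation in $(k^2+\Delta_T)u_o$ only appears after pairing with $u_o$ and integrating over $\partial\Omega_t$. Deferring to an unspecified adaptation of \cite[Lemma 3.2]{hassell-tao-mrl} does not supply the missing estimate. Your preliminary bound $\|\nabla_Tu_e\|_{L^2(\partial\Omega)}+\|\partial_\nu u_e\|_{L^2(\partial\Omega)}\lesssim k\|u\|_{L^2(\Omega)}$ is also only asserted; trace inequalities combined with Lemma~\ref{lem:w42} give only $k^{3/2}$.

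The missing idea is to work with the level-set norms $f_o(s)=\int_{\partial\Omega_s}u_o^2\,\de\sigma_s$, and never to estimate the indefinite combination $\alpha u_o^2-|\nabla_{g_s}u_o|^2$ by itself.

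\emph{Second derivative.} By the first variation formula, $f_o''=\int_{\partial\Omega_s}(2\partial_su_o+H_su_o)^2\,\de\sigma_s-2E(s)$, where $E$ is the normal energy \eqref{eq:def E}. Using the equation, $E(s)=\int_{\partial\Omega_s}\bigl((\partial_su_o)^2+H_su_o\partial_su_o-|\nabla_{g_s}u_o|^2+\alpha u_o^2-\tfrac12(\partial_sH_s)u_o^2\bigr)\,\de\sigma_s$.

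\emph{Near-conservation of $E$.} One has $|E'(s)|\le C\int_{\partial\Omega_s}(u_o^2+|\nabla u_o|^2)\,\de\sigma_s$ with no factor of $\beta$, because the tangential and $\alpha$-terms cancel up to the second fundamental form. Also $|E(0)|\le C\beta\|u\|^2_{L^2(\Omega)}$, via the commutator identity with $A=\eta\partial_s$. That identity bounds the boundary combination $\int_{\partial\Omega}\bigl((\partial_su_o)^2-u_o\partial_s^2u_o\bigr)$ directly, with no separate bound on $\partial_\nu u_o$ or $\nabla_Tu_o$. Hence $|E|\le C\beta\|u\|^2_{L^2(\Omega)}$.

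\emph{ODE argument.} Cauchy--Schwarz then gives $(f_o')^2\le f_o\bigl(f_o''+C\beta\|u\|^2_{L^2(\Omega)}\bigr)$. Normalize $\|u\|_{L^2(\Omega)}=1$. This forces $(\sqrt{f_o})'\le\sqrt{C\beta}$. Otherwise $\sqrt{f_o}$ becomes convex and keeps growing with slope at least $\sqrt{C\beta}$, which contradicts $\|u_o\|_{L^2(\Omega)}\le 1$ for large $\beta$. Together with $f_o(0)=\int_{\partial\Omega}u_e^2\le C$, this gives $f_o(s)\le C(1+\beta s^2)$. Integrating yields $C(s+\beta s^3)$, which is your $s+k^2s^3$ scaling with $\beta=k^2$. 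This ODE argument in the normal direction, using global normalization, is what replaces your unresolved Taylor remainder.
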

The delocalization scale $\lambda^{-1/6}$ in \Cref{thm:delocalization intro} is optimal when $\Omega = B_1\subseteq \bR^d$ is the unit ball for $d \geq 2$. In other words, there exist solutions $u_{\lambda}$ to \eqref{eq:clamped-plate} that concentrate near the boundary precisely at this scale $\lambda^{-1/6}$.
\begin{theorem}[Boundary localization in balls]\label{thm:whispering gallery} Consider the clamped plate problem in the unit ball~$\Omega = B_1$ of~$\RR^d$ for $d \geq 2$. There exists a sequence of $L^2$-normalized eigenfunctions $(u_{m})_{m=1}^{\infty}$ with the following property: if we denote the corresponding eigenvalues by~$(\lambda_{m})_{m=1}^{\infty}$, then   for all $\eps>0$ there exists  $\gamma(\eps)>0$, such that for $s(m)=\gamma(\eps)\lambda_m^{-1/6}$,
    \begin{align*}
          \lim_{m \to \infty} \|u_{m}\|^2_{L^2(\Omega \setminus \Omega_{s(m)})} > (1-\eps).
     \end{align*}
\end{theorem}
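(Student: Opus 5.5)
Separating variables in the unit ball and exhibiting explicit whispering-gallery type eigenfunctions is the natural route. In polar coordinates, take $u(r,\omega)=f(r)Y_\ell(\omega)$ with $Y_\ell$ a spherical harmonic of degree $\ell$. Write $\lambda=k^4$. Every solution of $\Delta^2u=k^4u$ that is regular at the origin is a combination of the two radial profiles $r^{1-d/2}J_\nu(kr)$ and $r^{1-d/2}I_\nu(kr)$, where $\nu=\ell+\frac{d-2}{2}$. So we set
\[
f(r)=r^{1-d/2}\bigl(J_\nu(kr)I_\nu(k)-I_\nu(kr)J_\nu(k)\bigr).
\]
This gives $f(1)=0$. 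The condition $f'(1)=0$ reduces to the cross-product equation
\[
J_\nu'(k)I_\nu(k)-I_\nu'(k)J_\nu(k)=0.
\]
For each large $\nu$ (that is, $\ell=m$), we pick $k=k_m$ to be the first positive root of this equation. We must show this root exists. It should lie between the first positive zero $j_{\nu,1}$ of $J_\nu$ and the next zero of $J_\nu'$, since $J_\nu'/J_\nu$ runs from $+\infty$ to $-\infty$ on that interval while $I_\nu'/I_\nu$ is positive and bounded there. Set $u_m=f_m(r)Y_m(\omega)$, normalized in $L^2$, and $\lambda_m=k_m^4$.

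The heart of the proof is uniform Airy asymptotics in the transition region. Olver's expansion gives $J_\nu(\nu+t\nu^{1/3})\approx (2/\nu)^{1/3}\Airy(-2^{1/3}t)$ uniformly for $t$ in compacts, with Airy-type tail bounds outside. The first root $j_{\nu,1}=\nu+a_1\nu^{1/3}+O(\nu^{-1/3})$ then gives $k_m=\nu+O(\nu^{1/3})$. Indeed, near $k\approx\nu$ the ratio $I_\nu'(k)/I_\nu(k)\approx \sqrt{1+\nu^2/k^2}/1\approx\sqrt2$ is $O(1)$, whereas $J_\nu'/J_\nu$ varies on the scale $\nu^{-1/3}$ in $k$ with size $\nu^{-2/3}$ times the Airy logarithmic derivative. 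The root therefore sits at $k_m=\nu+c\,\nu^{1/3}+o(\nu^{1/3})$, with $c$ determined by an Airy condition. More precisely, $-c$ lies between the first zero of $\Airy$ and the first zero of $\Airy'$.

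Next we compare the two pieces of $f$. We have $I_\nu(kr)/I_\nu(k)\le e^{-c'\nu(1-r)}$ roughly for $r<1$, because $\frac{d}{dr}\log I_\nu(kr)\ge k\sqrt{1+\nu^2/k^2}\cdot$const $\gtrsim\nu$. Also $|J_\nu(k)|\lesssim\nu^{-1/3}$. So the exponential component contributes only a boundary layer of width $\nu^{-1}$ and size $O(\nu^{-1/3})$. Its $L^2$ mass is $O(\nu^{-2/3}\nu^{-1})$, against a main term of order $\nu^{-2/3}\nu^{-1/3}$, so it is negligible. This matches the first main theorem above.

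The main term $r^{1-d/2}J_\nu(k_mr)$ is what matters. Substituting $kr=\nu+t\nu^{1/3}$, we have $1-r\approx(c-t)\nu^{-2/3}$. The mass $\int_0^1 J_\nu(k r)^2r\,dr$ is concentrated where $t$ is bounded: Airy decays like $e^{-\frac{2}{3}(2^{1/3}|t|)^{3/2}}$ for $t\to-\infty$, and for $kr<\nu$ outside the transition zone the standard exponential decay of $J_\nu$ below its turning point applies. Hence for any $\eps$ there is $T$ with at least $(1-\eps)$ of the mass in $1-r\le T\nu^{-2/3}$. Since $\lambda_m^{-1/6}=k_m^{-2/3}\approx\nu^{-2/3}$, choosing $\gamma(\eps)=2T$ gives the claim in the limit. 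The main obstacle is making the Airy approximation uniform, including the tail estimate $\int_{t<-T}$ being small relative to $\int_{|t|\le T}$, together with the root location $k_m$. I would handle both with Olver's uniform asymptotic expansion and the monotone decay of $J_\nu(x)$ for $x<\nu$ (for example via $J_\nu(\nu\operatorname{sech}\alpha)\le e^{-\nu(\alpha-\tanh\alpha)}/\sqrt{2\pi\nu\tanh\alpha}$).
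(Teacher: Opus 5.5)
Your argument is correct in outline. It uses the same eigenfunctions as the paper: your cross-product equation is $W_\nu(k)=0$ for the Wronskian $W_\nu=J_\nu I_\nu'-J_\nu'I_\nu$, so $k_m=w_{\nu,1}$. It also uses the same split into a $J$-part and an $I$-part. The key step, however, is done differently.

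The paper never integrates asymptotics. Lommel's integral (\Cref{lem:u_mo-square-integral}) writes $\int_{B_s}u_{m,o}^2$ exactly in terms of $J_\mu(w_{\mu,1}s)$ and $J_\mu'(w_{\mu,1}s)$. So the inner mass is controlled by the Airy asymptotics of $J_\mu,J_\mu'$ at the single point $\mu-(a-2)\mu^{1/3}$. For the total mass, the numerator $T_\mu$ is increasing in $s$, so $T_\mu(1)\ge T_\mu(\mu/w_{\mu,1})=\mu^2J_\mu'(\mu)^2$. The paper adds $\mu+1.855\mu^{1/3}<w_{\mu,1}<\mu+2\mu^{1/3}$, which comes from $j_{\mu,1}<w_{\mu,1}<j_{\mu+1,1}$ via $W_\mu=J_\mu I_{\mu+1}+I_\mu J_{\mu+1}$, and a pointwise bound on $\Airy'$. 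This gives the explicit bound $1-400e^{-a^{3/2}/4}$ with no uniform asymptotics or tail integration. The $I$-part is handled by the general estimate of \Cref{thm:approx-and-decay}.

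You instead integrate Olver's uniform Airy approximation over a window $t\in[-T,c_m]$ and bound the region $x<\nu-T\nu^{1/3}$ separately. You treat the $I$-part directly via $I_\nu(kr)/I_\nu(k)\le r^\nu$, which follows from $xI_\nu'(x)/I_\nu(x)>\nu$; that part is self-contained and fine. Your route needs more analytic input. In particular, the bound below the turning point must be a genuine inequality that keeps the prefactor $(2\pi\nu\tanh\alpha)^{-1/2}$. Kapteyn's bound $e^{-\nu(\alpha-\tanh\alpha)}$ alone loses a factor of order $\nu^{2/3}$ against the main mass $\int_0^{k}J_\nu(x)^2x\,\mathrm{d}x\asymp\nu^{2/3}$. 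In exchange, your route identifies the limiting rescaled mass profile ($\Airy^2$ in the variable $t$) and does not rely on an exact integral identity.

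Several details need fixing, though none of them breaks the argument.
\begin{itemize}
\item On $(j_{\nu,1},j'_{\nu,2})$ the ratio $J_\nu'/J_\nu$ decreases from $+\infty$ to $0$, not to $-\infty$. The intermediate value argument still works because $I_\nu'/I_\nu>0$. You also do not need $k_m$ to be the first root, though there is none before $j_{\nu,1}$, since $J_\nu'/J_\nu<\nu/x<I_\nu'/I_\nu$ on $(0,j_{\nu,1})$.
\item In fact $k_m-j_{\nu,1}=O(1)$. So $-2^{1/3}c$ equals the first zero of $\Airy$; it does not lie between the first zeros of $\Airy$ and $\Airy'$. All you use is that $(k_m-\nu)\nu^{-1/3}$ stays in a compact subset of $(0,\infty)$, and your interval already gives that.
\item The main term is $\int_0^1J_\nu(k_mr)^2r\,\mathrm{d}r\asymp\nu^{-4/3}$, not $\nu^{-1}$. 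The $I$-part is $O(\nu^{-5/3})$, so it is still negligible.
\item The window $t\ge-T$ corresponds to $1-r\lesssim(c+T)\nu^{-2/3}$. So $\gamma(\eps)$ should be of size $c+T$, not $2T$.
\end{itemize}
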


We finally show that one has no localization on the flat cylinder. We prove
\begin{theorem}[No localization in flat cylinders]\label{thm:delocalization on cylinder}
Consider the clamped plate problem in the cylinder $\Omega = \bS^1 \times [-1, 1] $.  Let $(u_\lambda)_{\lambda}$ be any sequence of $L^2$-normalized eigenfunctions with $\lambda\to\infty$. For any $s \in (0,1)$, there exists $\delta = \delta(s) >0$ such that
       \begin{align*}
            \varlimsup_{\lambda\to\infty}\|u_{\lambda}\|^2_{L^2(\Omega\setminus\Omega_s)} < (1-\delta) \quad  \mathrm{ and }\quad
            \varlimsup_{\lambda\to\infty}\|u_{\lambda}\|^2_{L^2(\Omega_s)} < (1-\delta).
       \end{align*}
       Hence, eigenfunctions neither localize near the boundary nor localize in the interior of the domain.
\end{theorem}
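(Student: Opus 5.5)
On the cylinder $\Omega=\bS^1\times[-1,1]$ we can separate variables and write everything explicitly. Put $k=\lambda^{1/4}$ and write $x=(\theta,t)\in\bS^1\times[-1,1]$. Expanding an eigenfunction in Fourier modes in $\theta$ gives $u=\sum_{n\in\mathbb{Z}} e^{in\theta} f_n(t)$. Since the boundary conditions $u=\partial_t u=0$ at $t=\pm1$ hold mode by mode, each $f_n$ solves
\[(\partial_t^2-n^2)^2 f_n = k^4 f_n \quad\text{on } (-1,1), \qquad f_n=f_n'=0 \text{ at } t=\pm1 .\]
So $f_n$ is a one-dimensional clamped beam eigenfunction with a shift. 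The eigenspace for $\lambda$ is spanned by such modes, possibly with several $n$ sharing the same $\lambda$. By Parseval in $\theta$, the masses $\|u\|^2_{L^2(\Omega\setminus\Omega_s)}$ and $\|u\|^2_{L^2(\Omega_s)}$ split as sums over $n$ of the one-dimensional masses of $f_n$ on $\{1-s<|t|\le1\}$ and on $\{|t|<1-s\}$. It therefore suffices to prove a bound that is uniform in $n$ for each one-dimensional problem: there is $\delta(s)>0$ such that both fractions of $\|f_n\|^2_{L^2(-1,1)}$ are at most $1-\delta$, for all $k$ large and all $n$ allowed by the eigenvalue equation.

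\textbf{Solving the ODE.} Set $\alpha^2=k^2-n^2$ and $\beta^2=k^2+n^2$. The general solution is
\[f=a\cos(\alpha t)+b\sin(\alpha t)+c\cosh(\beta t)+d\sinh(\beta t).\]
By the symmetry $t\mapsto -t$ we may treat even and odd solutions separately. For the even case, $f=a\cos\alpha t + c\cosh\beta t$. The conditions at $t=1$ give $c=-a\cos\alpha/\cosh\beta$ and the dispersion relation $\alpha\tan\alpha=-\beta\tanh\beta$. One must also handle the case $n>k$, where $\alpha$ is imaginary and both components are exponential. In that case the clamped conditions force $f=0$, because the operator $(\partial_t^2-n^2)^2-k^4$ is positive once $n^4>k^4$ (use integration by parts with the clamped conditions). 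Hence $|n|\le k$. The exponential piece $c\cosh\beta t$ has $L^2$-mass at most $C|a|^2/\beta\le C|a|^2/k$, since $\beta\ge k$. This matches the first main theorem. Consequently $f=a\cos\alpha t+O(k^{-1/2}|a|)$ in $L^2$, uniformly in $n$.

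\textbf{Oscillatory part.} The remaining task is to bound the mass distribution of $\cos(\alpha t)$, or $\sin(\alpha t)$ in the odd case, on $[-1,1]$. The delicate regime is $\alpha$ bounded or small, i.e. $n$ close to $k$. There the oscillatory part does not oscillate, and the exponential part is not negligible relative to the scale of $\alpha$. The dispersion relation controls this regime. For $\beta$ large, $\tanh\beta\approx1$, so $\alpha\tan\alpha\approx-\beta\le -k$. This forces $\tan\alpha$ to be large and negative, so $\alpha$ lies just to the right of $(m+\tfrac12)\pi$ for some $m\ge0$, with $\alpha$ bounded below by about $\pi/2$. The odd case is analogous and gives $\alpha\gtrsim\pi$. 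Thus $\alpha\ge c_0>0$ uniformly.

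For $\alpha\in[c_0,\infty)$, the function $g(\alpha)=\int_{|t|<1-s}\cos^2(\alpha t)\,dt\big/\int_{-1}^1\cos^2(\alpha t)\,dt$ is continuous. It lies strictly in $(0,1)$ for each $\alpha$, and it tends to $1-s$ as $\alpha\to\infty$. Hence $g$ is bounded away from $0$ and $1$ on this range by constants depending only on $s$. Adding the $O(k^{-1/2})$ exponential error then gives the claim for each mode, uniformly. Summing over $n$ yields both $\varlimsup$ bounds.

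\textbf{Main obstacle.} The key step is the uniform lower bound $\alpha\ge c_0$ extracted from the dispersion relation, together with ruling out the near-degenerate modes $n\approx k$. I would first verify carefully that $\tanh\beta\ge\tanh k$ makes the root location in each branch of $\tan$ stable. Then I would check that the ratio $|c|^2\|\cosh\beta t\|^2/\|a\cos\alpha t\|^2$ is $O(1/k)$ uniformly, using that $|\cos\alpha|=O(\alpha/\beta)$ from the dispersion relation. This gives $|c|^2\cosh^2\beta\lesssim |a|^2\alpha^2/\beta^2$.
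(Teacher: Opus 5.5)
Your proposal is correct and follows essentially the paper's argument: separation of variables, reduction to single modes (you use Parseval in $\theta$ plus parity in $t$, the paper uses orthogonality of its explicit basis on $\Sigma_{sh}$), negligibility of the hyperbolic part, and control of the mass fraction of $\cos^2(\alpha t)$ or $\sin^2(\alpha t)$. The only real difference is the last step, where your continuity/compactness argument for $g$ on $[c_0,\infty)$ replaces the paper's explicit limits in its two regimes (mode index $\to\infty$, or bounded mode index with angular frequency $\to\infty$), giving a non-explicit $\delta(s)$ rather than the paper's explicit bounds; also, $\alpha>\pi/2$ (resp.\ $\alpha>\pi$) follows simply from $\alpha\tan\alpha<0$ (resp.\ from $\alpha\cot\alpha\le 1<\beta\coth\beta$ on $(0,\pi)$), not from $\tan\alpha$ being large and negative, which is false when $|n|\ll k$.
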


\begin{figure}[ht]
\centering
    \includegraphics[width=0.4\textwidth]{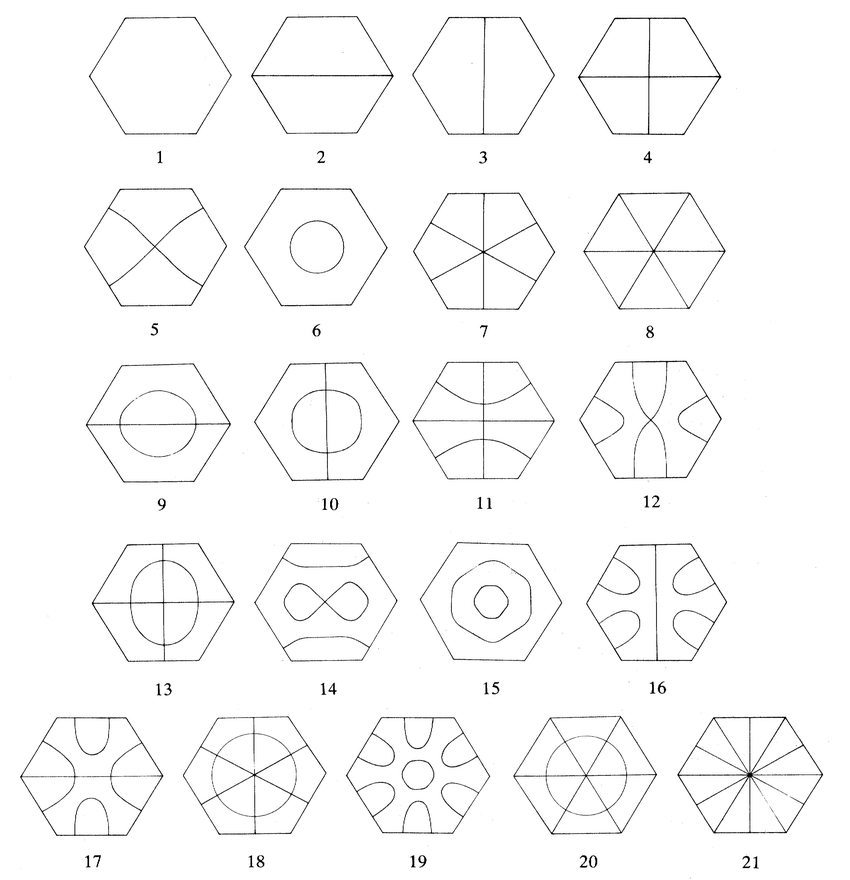}
    \caption{Nodal sets of Dirichlet Laplacian eigenfunctions in a hexagon (Figure 2 in \cite{BR78}).}
    \label{fig:laplacian 1}
\end{figure}

\subsection{Motivation and related open directions}
Our main motivation for this work was a natural question posed to us by Iosif Polterovich, which remains open: does the nodal set of a clamped plate eigenfunction become dense at the wavelength scale $\lambda^{-1/4}$ as $\lambda\to\infty$? It is tempting to compare the plate eigenfunction with its oscillating component, whose nodal set is dense at the corresponding wavelength scale \cite{bruning1978, yau-problem1982}. However, Enciso and Greilhuber~\cite{enciso2025non} recently showed, surprisingly, that no universal density holds: there is no constant $C>0$ such that for every smooth domain~$\Omega\subset\bR^2$ and every clamped plate eigenfunction~$u_\lambda$, the nodal set $\{u_\lambda=0\}$ is $C\lambda^{-1/4}$-dense.
Nevertheless, the question whether in a given domain~$\Omega$ the nodal sets become dense remains open. 

One may also ask whether a density result  holds under  generic or curvature assumptions. For example, if one could show that the oscillating component~$u_{\lambda,o}$ has sufficiently large $\sup$-norm, say, at least inverse polynomial in the frequency, on every ball of the wavelength scale, then such a density statement would follow from the approximation result in \Cref{thm:approx-and-decay-intro}. This leads naturally to several questions concerning the small-scale delocalization of eigenfunctions.
For membrane eigenfunctions~$v_\lambda$ on a manifold~$M$, the pointwise Weyl law \cite{hormander-pointwise-weyl} implies that the sequence  $|v_\lambda(x)|^2$ is Ces\`aro summable uniformly in~$x$ to the constant~$1/\mathrm{Vol}(M)$ as $\lambda \to \infty$. In particular, the Ces\`aro means are uniformly bounded from below. It is natural to ask whether stronger forms of convergence hold under generic assumptions or geometric conditions, or after removing a density-zero subsequence of eigenfunctions. One may similarly ask whether the $L^1$-norm on every ball of the wavelength scale admits a uniform lower bound, which would provide a quantitative form of delocalization in the spirit of Steinerberger's question \cite{steinerberger-L1}. It is plausible that delocalization results of this type for membrane eigenfunctions could also be adapted to the oscillating components of clamped plate eigenfunctions. Relevant global results include the Shnirelman--Zelditch--Colin de Verdi\`ere quantum ergodicity theorem \cite{shnirelman1973, zelditch1987, CdV1985}, which states, say on closed manifolds, that when the geodesic flow is ergodic, the probability measures $|v_\lambda(x)|^2\, \de x$ converge to the uniform measure along a density-one subsequence of eigenfunctions. We also mention the theorem of Dyatlov--Jin--Nonnenmacher \cite{Dyatlov-Jin2018, Dyatlov-Jin-Nonnenmacher2022}, which gives on closed manifolds of negative curvature a uniform lower bound for $\|v_\lambda\|_{L^2(U)}$ on every fixed open set~$U$. These results, however, concern fixed spatial scales and do not capture the behavior of eigenfunctions on the wavelength scale.

Finally, we point to a closely related direction for future investigation. The boundary conditions for a freely vibrating plate, or more generally a shell, are considerably more intricate; see, for instance, \cite[ch. II]{landau-lifshitz-elasticity}. In particular, their description requires more refined mechanical models than the clamped plate problem considered here; we refer to the discussions in \cite[ch.\ X, \S224]{rayleigh}, \cite[ch.\ 4, \S22]{timoshenko-plates1959}, and \cite[ch.\ 3.5, ch.\ 10]{reddy}. It would be interesting to determine whether analogous reductions to second-order equations remain valid in these settings and, if so, how such reductions affect localization and delocalization phenomena as well as the resulting nodal patterns.

\clearpage\section*{Acknowledgments} 
We are very grateful to Sasha Logunov for his guidance, accompanying this project, several valuable  questions, insightful discussions and encouragement, and  to David Jerison for his invaluable discussions and for suggesting the title of this paper.
We are grateful to Iosif Polterovich who raised the question on the density of zeros for the clamped plate, serving as the initial motivation for this work. We are also grateful to Josef Greilhuber for illuminating discussions.
DM would like to thank the hospitality of the Mathematics department at MIT where part of this work was done.

\section{The clamped plate under tension and preliminary estimates}\label{sec:preliminary}

We start by generalizing the clamped plate eigenvalue problem \eqref{eq:clamped-plate}. In more practical plate models as in \cite{WC43,L44,rayleigh,GHS10} the equation in~\eqref{eq:clamped-plate} usually includes  an additional term~$\tau \Delta u$, where the positive tension parameter $\tau$ depends on the thickness, densities, Young's modulus, and Poisson's ratio of the metal plate. Hence, it leads to the boundary problem
    \begin{align}\label{eq:main clamped}
        \begin{cases}
            Lu := \Delta^2 u - \tau \Delta u - \lambda u = 0 \text{ in } \Omega,
            \\
            u = \partial_{\nu} u = 0 \text{ on } \partial\Omega.
        \end{cases}
    \end{align}
    Similarly to the case where $\tau=0$ we can define oscillating and exponential parts. Set $-\alpha, \beta$ to be the roots of the equation $x^2-\tau x-\lambda =0$.
    \begin{align*}
        \alpha \coloneqq \left(\frac{\tau ^2}{4} + \lambda\right)^{1/2}- \frac{\tau}{2}, \quad\beta \coloneqq  \left(\frac{\tau ^2}{4} + \lambda\right)^{1/2}+\frac{\tau}{2},
    \end{align*}
and let 
    \begin{align*}
        u_o \coloneqq \frac{1}{\alpha + \beta} (\beta-\Delta)u,
            \quad
            u_e \coloneqq \frac{1}{\alpha + \beta} (\alpha+\Delta)u.
    \end{align*}
    
\begin{lemma}[see \cite{WC43}]
\label{lem:two eigenfunctions}
 We have $u=u_o+u_e$, $L=(\Delta+\alpha)(\Delta-\beta)$ and
    \begin{align*}
        \begin{cases}
            (\Delta + \alpha)u_o = 0 \text{ in } \Omega, \\
            (\Delta - \beta)u_e = 0 \text{ in } \Omega,\\
            -u_o=u_e = \frac{\Delta u}{\alpha + \beta} \text{ on }\partial\Omega,\\
            -\partial_{\nu}u_o= \partial_{\nu}u_e = \frac{\partial_{\nu} \Delta u}{\alpha + \beta} \text{ on } \partial\Omega.
        \end{cases}
    \end{align*}
\end{lemma}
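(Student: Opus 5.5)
The plan is to verify each claim of Lemma~\ref{lem:two eigenfunctions} by direct algebra, using only the definitions of $\alpha,\beta$ and the clamped boundary conditions.

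\emph{The decomposition and the factorization.} Adding the definitions gives
\[
u_o+u_e=\frac{1}{\alpha+\beta}\bigl((\beta-\Delta)u+(\alpha+\Delta)u\bigr)=u .
\]
Since $-\alpha$ and $\beta$ are the roots of $x^2-\tau x-\lambda$, Vieta's formulas give $\beta-\alpha=\tau$ and $\alpha\beta=\lambda$. Because $\Delta$ commutes with constants,
\[
(\Delta+\alpha)(\Delta-\beta)=\Delta^2-(\beta-\alpha)\Delta-\alpha\beta=\Delta^2-\tau\Delta-\lambda=L .
\]
We note that $\alpha+\beta=2(\tau^2/4+\lambda)^{1/2}>0$, since $\lambda>0$ for a nontrivial clamped plate eigenfunction. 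Hence the division by $\alpha+\beta$ is legitimate.

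\emph{The interior equations.} The two factors of $L$ commute, so
\[
(\Delta+\alpha)u_o=\frac{-1}{\alpha+\beta}(\Delta+\alpha)(\Delta-\beta)u=-\frac{Lu}{\alpha+\beta}=0,
\]
and
\[
(\Delta-\beta)u_e=\frac{1}{\alpha+\beta}(\Delta-\beta)(\Delta+\alpha)u=\frac{Lu}{\alpha+\beta}=0 .
\]

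\emph{The boundary identities.} By the clamped boundary conditions, $u=0$ and $\partial_\nu u=0$ on $\partial\Omega$. Substituting into the definitions gives, on $\partial\Omega$,
\[
u_o=-\frac{\Delta u}{\alpha+\beta},\qquad u_e=\frac{\Delta u}{\alpha+\beta},
\]
since the terms $\beta u$ and $\alpha u$ vanish there. In the same way, the normal derivatives satisfy $\partial_\nu u_o=-\partial_\nu\Delta u/(\alpha+\beta)$ and $\partial_\nu u_e=\partial_\nu\Delta u/(\alpha+\beta)$, because the terms $\beta\partial_\nu u$ and $\alpha\partial_\nu u$ vanish.

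The only point needing care is regularity, namely that $\Delta u$ and $\partial_\nu\Delta u$ have well-defined boundary traces. This follows from standard elliptic regularity for the clamped problem on a $C^4$ domain, which gives $u\in H^4(\Omega)$ up to the boundary. Then $\Delta u\in H^2(\Omega)$, and its trace and normal-derivative trace exist. Apart from this, there is no real obstacle: the lemma is an algebraic identity.
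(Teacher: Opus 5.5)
Your proposal is correct and takes the same approach as the paper, which dismisses the lemma as "Clear." Your direct verification — Vieta's relations $\beta-\alpha=\tau$, $\alpha\beta=\lambda$ for the factorization, then substituting $u=\partial_\nu u=0$ on $\partial\Omega$ — is exactly the intended check. Your regularity remark ($u\in H^4(\Omega)$, so $\Delta u$ and $\partial_\nu\Delta u$ have traces) is a harmless extra detail.
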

\begin{proof}
Clear.
\end{proof}
We prove approximation and delocalization in the following two theorems.
\begin{theorem}[Approximation by Helmholtz solutions]\label{thm:approx-and-decay}
    Assume that $\Omega \subseteq \R^d$ is a~$C^4$-bounded domain.
    There are positive constants $C,\beta_0,\delta$ depending only on $\Omega$, such that for any solution~$u$ to the clamped plate eigenvalue problem \eqref{eq:main clamped} with $\beta \geq \beta_0$, for any $s \in[0,\delta]$,
        \begin{align}\label{ineq:main exp}
             \|u_e\|^2_{L^2(\partial\Omega_s)} \leq C e^{-s \sqrt{\beta}} \|u\|^2_{L^2(\Omega)}.
        \end{align}
        We also have
        \begin{align}\label{ineq:main global} 
              \|u_e\|^2_{L^2(\Omega_s)}\leq C \beta^{-1/2} e^{-s \sqrt{\beta}} \|u\|^2_{L^2(\Omega)}.
        \end{align}
        In particular, 
        \begin{align*}
              \|u_e\|^2_{L^2(\Omega)}\leq C \beta^{-1/2} \|u\|^2_{L^2(\Omega)}.
        \end{align*}
\end{theorem}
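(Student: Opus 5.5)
The plan is an energy-decay argument for the exponential component $u_e$ in the shells $\Omega\setminus\Omega_s$, together with one boundary estimate at $s=0$ coming from a Rellich identity for the full equation $Lu=0$. Throughout, $\delta$ is chosen so that the distance function $\rho=\dist(\cdot,\partial\Omega)$ is $C^3$ on $\{\rho<2\delta\}$ and each $\partial\Omega_s$ is a uniformly $C^3$ hypersurface. Let $\nu$ denote the outward normal of $\Omega_s$, and set
\[
E(s)=\int_{\Omega_s}\bigl(|\nabla u_e|^2+\beta u_e^2\bigr).
\]

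\textbf{Step 1 (decay of $E$).} Since $(\Delta-\beta)u_e=0$ by Lemma~\ref{lem:two eigenfunctions}, Green's formula gives $E(s)=\int_{\partial\Omega_s}u_e\,\partial_\nu u_e$. The coarea formula ($|\nabla\rho|=1$) gives
\[
-E'(s)=\int_{\partial\Omega_s}\bigl(|\nabla u_e|^2+\beta u_e^2\bigr)\ \ge\ \int_{\partial\Omega_s}\bigl((\partial_\nu u_e)^2+\beta u_e^2\bigr).
\]
By AM--GM the right-hand side is at least $2\sqrt\beta\int_{\partial\Omega_s}|u_e\,\partial_\nu u_e|\ge 2\sqrt\beta\,E(s)$. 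Hence
\[
E(s)\le E(0)\,e^{-2\sqrt\beta s}.
\]
This is stronger than needed, since the theorem only asks for the rate $e^{-s\sqrt\beta}$.

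\textbf{Step 2 (from $E$ to the claimed norms).} For \eqref{ineq:main global}, simply note $\|u_e\|^2_{L^2(\Omega_s)}\le \beta^{-1}E(s)$. For \eqref{ineq:main exp}, apply a scaled trace inequality on the layer $\Omega_s\setminus\Omega_{s+h}$ with $h=\beta^{-1/2}$ (this requires $s+h\le 2\delta$, which holds once $\beta\ge\beta_0$):
\[
\int_{\partial\Omega_s}u_e^2\le C\Bigl(h^{-1}\int_{\mathrm{layer}}u_e^2+h\int_{\mathrm{layer}}|\nabla u_e|^2\Bigr)\le C\beta^{-1/2}E(s).
\]
So both estimates reduce to the single claim
\[
E(0)\le C\sqrt\beta\,\|u\|^2_{L^2(\Omega)},
\]
after which the final statement of the theorem is the case $s=0$.

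\textbf{Step 3 (boundary estimate at $s=0$; the main obstacle).} By Lemma~\ref{lem:two eigenfunctions}, $u_e=\Delta u/(\alpha+\beta)$ and $\partial_\nu u_e=\partial_\nu\Delta u/(\alpha+\beta)$ on $\partial\Omega$. Also $(\alpha+\beta)^2=\tau^2+4\lambda\sim\beta^2$ and $\alpha\beta=\lambda$.

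First I would prove $\|\Delta u\|^2_{L^2(\partial\Omega)}\le C\beta^2\|u\|^2$. The tool is a Rellich--Pohozaev identity: multiply $Lu=0$ by $X\cdot\nabla u$, where $X=-\nabla\rho$ is a smooth extension of $\nu$, and integrate by parts. Using $u=\partial_\nu u=0$, so that $\nabla u=0$ on $\partial\Omega$ and $\Delta u=\partial^2_{\nu\nu}u$ there, the boundary contribution is $\tfrac12\int_{\partial\Omega}(\Delta u)^2\,X\cdot\nu$. The interior terms are bounded by $C\int\bigl(|D^2u|^2+\tau|\nabla u|^2+\lambda u^2\bigr)$, which is $\le C\lambda\|u\|^2$ by the energy identity $\int(\Delta u)^2+\tau|\nabla u|^2=\lambda\int u^2$ and $H^2_0$ elliptic regularity. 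This yields $\|u_e\|^2_{L^2(\partial\Omega)}\le C\|u\|^2$, which is the first claim of Theorem~\ref{thm:approx-and-decay-intro}.

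Second, I need $\|\partial_\nu u_e\|_{L^2(\partial\Omega)}\lesssim\sqrt\beta\,\|u\|$. Then $E(0)\le\|u_e\|_{\partial}\|\partial_\nu u_e\|_{\partial}\lesssim\sqrt\beta\,\|u\|^2$, which is exactly the claim of Step 2. I would try two routes.
\begin{enumerate}
\item A Rellich identity for $u_e$ itself, with the same field $X$. It gives
\[
\int_{\partial\Omega}\bigl((\partial_\nu u_e)^2-|\nabla_T u_e|^2-\beta u_e^2\bigr)X\cdot\nu=O(E(0)),
\]
so $\|\partial_\nu u_e\|^2_{\partial}\le \beta\|u_e\|^2_{\partial}+\|\nabla_Tu_e\|^2_{\partial}+C\|u_e\|_{\partial}\|\partial_\nu u_e\|_{\partial}$. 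The last term is absorbed. The remaining problem is the tangential term $\|\nabla_T\Delta u\|_{\partial}$, which is the delicate point.
\item If route 1 stalls on that term, a higher-order Rellich identity for $L$. Multiply $Lu=0$ by $X\cdot\nabla\Delta u$ (or by $X\cdot\nabla$ applied to $(\Delta-\beta)u$). This should produce $\int_{\partial\Omega}\bigl((\partial_\nu\Delta u)^2-|\nabla_T\Delta u|^2\bigr)X\cdot\nu$ with interior error of size $O(\beta^3\|u\|^2)$, up to lower-order terms controlled by Step 1 and interpolation. Combined with route 1, this is the estimate I expect to require the most care.
\end{enumerate}
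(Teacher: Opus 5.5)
Your Steps 1 and 2 are correct. So is the first half of Step 3: the Rellich identity with multiplier $X\cdot\nabla u$ yields $\int_{\partial\Omega}u_e^2\,\de\sigma\le C\|u\|^2_{L^2(\Omega)}$, which is Proposition~\ref{prop:boundary laplacian estimates}. It needs only the $H^2$ bounds of Lemma~\ref{lem:w22}, whereas the paper's identity $\int_{\partial\Omega}(\Delta u)^2\,\de\sigma=-\int_\Omega u\,L(Au)\,\de x$ uses the $H^4$ bounds.

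The gap is the estimate $E(0)\le C\sqrt{\beta}\,\|u\|^2_{L^2(\Omega)}$, equivalently $\|\partial_\nu\Delta u\|_{L^2(\partial\Omega)}\lesssim\beta^{3/2}\|u\|_{L^2(\Omega)}$, to which you reduce everything. It is not proved, and as set up neither route can prove it. On $\partial\Omega$ you have $u_e=\Delta u/(\alpha+\beta)$, $\partial_\nu u_e=\partial_\nu\Delta u/(\alpha+\beta)$ and $\nabla_T u_e=\nabla_T\Delta u/(\alpha+\beta)$. So route 1, route 2, and the Rellich identity for $u_o$ (which is what the multiplier $X\cdot\nabla(\Delta-\beta)u$ gives) all control one and the same indefinite boundary form $\int_{\partial\Omega}\bigl((\partial_\nu\Delta u)^2-|\nabla_T\Delta u|^2\bigr)\,\de\sigma$, modulo terms you already control. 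No combination of them separates the normal part from the tangential part. You would need an independent bound on $\|\nabla_T\Delta u\|_{L^2(\partial\Omega)}$, and the proposal does not supply one. The crude bound $E(0)\le C\beta\|u\|^2_{L^2(\Omega)}$ from Lemmas~\ref{lem:w22} and~\ref{lem:w42} loses a factor $\sqrt{\beta}$. Through Steps 1--2 it only gives the theorem for $s\gtrsim\beta^{-1/2}\log\beta$.

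The missing idea is that $E(0)$ is not needed. Set $f_e(s)=\int_{\partial\Omega_s}u_e^2\,\de\sigma_s$. The first variation formula and Green's formula give, as in \eqref{eq:fe'}, $f_e'(s)=-2E(s)+\int_{\partial\Omega_s}H_s u_e^2\,\de\sigma_s$. Integrate over $[0,t]$, and use your boundary estimate together with $\|u_e\|_{L^2(\Omega)}\le\|u\|_{L^2(\Omega)}$ (Lemma~\ref{lem:orthogonal u v}). This gives $\int_0^t E(s)\,\de s\le C\|u\|^2_{L^2(\Omega)}$. Since $E$ is nonincreasing, $E(\beta^{-1/2})\le C\sqrt{\beta}\,\|u\|^2_{L^2(\Omega)}$. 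Restarting Step 1 at $s_0=\beta^{-1/2}$ and applying Step 2 then gives \eqref{ineq:main exp} and \eqref{ineq:main global} for $s\ge\beta^{-1/2}$, even with rate $e^{-2s\sqrt{\beta}}$. For $s\le\beta^{-1/2}$, use $f_e'\le Cf_e$, so $f_e(s)\le Cf_e(0)$. Integrate this over the collar $\Omega\setminus\Omega_{\beta^{-1/2}}$, and bound the remaining part by $\beta^{-1}E(\beta^{-1/2})$.

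Repaired this way, your argument is a genuinely different proof from the paper's, which never uses $E$. The paper proves $f_e''\ge 1.8\beta f_e$ and controls $e^{s\sqrt{1.8\beta}}f_e$ by a convexity/contradiction argument. It then needs the subharmonicity Lemma~\ref{lem:subharmonic-estimate} for the deep interior, which your monotone energy $E(s)$ handles automatically.
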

\begin{theorem}[Delocalization near the boundary]\label{thm:delocalization}
Let $u$ be a solution to Problem~\eqref{eq:main clamped}. The function $u_o$ delocalizes near the boundary in the sense that for any $s \in[0,\delta]$,
        \begin{align*}
            \|u_o\|^2_{L^2(\Omega\setminus\Omega_s)}\leq C(s + \beta s^3)\|u\|^2_{L^2(\Omega)}.
        \end{align*}
In particular, for any $s = \gamma \beta^{-1/3}$ with $\gamma \in (0,1)$,
    \begin{align*}
  \|u_o\|^2_{L^2(\Omega\setminus\Omega_s)}\leq C\gamma\|u\|^2_{L^2(\Omega)}.
        \end{align*}
\end{theorem}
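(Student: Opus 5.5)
The plan is to reduce the strip estimate to two trace bounds on the parallel hypersurfaces $\partial\Omega_t$, $0\le t\le s$, and then integrate in $t$. Throughout I use normal coordinates $(t,y)\in[0,\delta]\times\partial\Omega$ near the boundary, with $t=\dist(x,\partial\Omega)$; for $\delta=\delta(\Omega)$ small these are $C^3$ with Jacobian comparable to $1$. Note also $\alpha\le\beta\le\alpha+\tau$, so $\alpha$ and $\beta$ are interchangeable up to constants.

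\emph{Step 1: boundary data of $u_o$.} By Lemma~\ref{lem:two eigenfunctions}, $u_o=-u_e$ and $\partial_\nu u_o=-\partial_\nu u_e$ on $\partial\Omega$. Theorem~\ref{thm:approx-and-decay} with $s=0$ gives $\|u_o\|^2_{L^2(\partial\Omega)}=\|u_e\|^2_{L^2(\partial\Omega)}\le C\|u\|^2_{L^2(\Omega)}$. For the derivatives of $u_e$ I will use that it solves $(\Delta-\beta)u_e=0$ and is exponentially small inside. A boundary-layer energy estimate then gives $\|\nabla u_e\|^2_{L^2(\partial\Omega)}\le C\beta\|u\|^2_{L^2(\Omega)}$: one can run the same argument that proves \eqref{ineq:main exp}, or a Rellich identity for $\Delta-\beta$ with the vector field $X=\chi(t)\nabla t$. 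Hence $\|\nabla u_o\|^2_{L^2(\partial\Omega)}\le C\beta\|u\|^2_{L^2(\Omega)}$. The global bound $\|u_o\|_{L^2(\Omega)}\le\|u\|_{L^2(\Omega)}$ comes from orthogonality.

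\emph{Step 2: uniform normal-derivative bound on $\partial\Omega_t$.} The goal is
\[
\sup_{0\le t\le\delta}\|\partial_t u_o\|^2_{L^2(\partial\Omega_t)}\le C\beta\|u\|^2_{L^2(\Omega)}.
\]
I would apply the Rellich identity for $\Delta+\alpha$ with the field $X=\nabla t$ on the strip $\Omega\setminus\Omega_t$. The interior terms involve $DX(\nabla u_o,\nabla u_o)$, $\operatorname{div}X\,|\nabla u_o|^2$ and $\alpha\operatorname{div}X\,u_o^2$. These are bounded by $C(\|\nabla u_o\|^2_{L^2(\Omega)}+\alpha\|u_o\|^2_{L^2(\Omega)})$. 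Green's formula gives $\|\nabla u_o\|^2_{L^2(\Omega)}=\alpha\|u_o\|^2_{L^2(\Omega)}+\int_{\partial\Omega}u_o\partial_\nu u_o$, so these terms are at most $C\beta\|u\|^2_{L^2(\Omega)}$ by Step 1. The boundary term on $\partial\Omega$ is controlled by Step 1. On $\partial\Omega_t$ the boundary term is $\int_{\partial\Omega_t}\bigl(|\partial_t u_o|^2-|\nabla_T u_o|^2+\alpha u_o^2\bigr)$.

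\emph{Main obstacle.} The term $-|\nabla_T u_o|^2$ on $\partial\Omega_t$ has the wrong sign, and it is the one step I am unsure about. What I would try first is the Hassell--Tao device: also apply the Rellich identity on $\Omega_t$ itself. There, on the same hypersurface $\partial\Omega_t$, the outward normal is reversed, so that integrand enters with the opposite orientation. I would then combine this with the averaged identity $\int_{\partial\Omega_t}u_o\partial_t u_o=\pm\bigl(\int|\nabla u_o|^2-\alpha\int u_o^2\bigr)$, taken over the appropriate region. If this does not close pointwise in $t$, I would settle for a bound averaged over a window $[t,t+\beta^{-1/2}]$, which is enough for Step 3 up to constants.

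\emph{Step 3: integrate along normals.} Write $u_o(t,y)=u_o(0,y)+\int_0^t\partial_r u_o(r,y)\,dr$. Cauchy--Schwarz gives
\[
\|u_o\|^2_{L^2(\partial\Omega_t)}\le C\Bigl(\|u_o\|^2_{L^2(\partial\Omega)}+t\int_0^t\|\partial_r u_o\|^2_{L^2(\partial\Omega_r)}dr\Bigr)\le C(1+\beta t^2)\|u\|^2_{L^2(\Omega)}.
\]
Integrating over $t\in[0,s]$ with the coarea formula yields $\|u_o\|^2_{L^2(\Omega\setminus\Omega_s)}\le C(s+\beta s^3)\|u\|^2_{L^2(\Omega)}$. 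Taking $s=\gamma\beta^{-1/3}$ gives $s+\beta s^3\le\gamma\beta^{-1/3}+\gamma^3\le C\gamma$ for $\beta\ge\beta_0$ and $\gamma\in(0,1)$, which is the second claim.
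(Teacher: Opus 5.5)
Your Steps~1 and~3 are reasonable in outline. The conclusion of Step~3, $\|u_o\|^2_{L^2(\partial\Omega_t)}\le C(1+\beta t^2)\|u\|^2_{L^2(\Omega)}$, is exactly Proposition~\ref{prop:fo(s) bound}. But everything rests on Step~2, which is not established, and the device you suggest cannot establish it. The Rellich identity for $\Delta+\alpha$ on $\Omega\setminus\Omega_t$ controls only the indefinite quantity
\[
Q(t)=\int_{\partial\Omega_t}\bigl((\partial_t u_o)^2-|\nabla_T u_o|^2+\alpha u_o^2\bigr)\,\de\sigma_t ,
\]
giving $|Q(t)|\le C\beta\|u\|^2_{L^2(\Omega)}$. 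Running the identity on $\Omega_t$ expresses $-Q(t)$ as an interior integral, and adding the two just reproduces the global identity on $\Omega$. So nothing separating $(\partial_t u_o)^2$ from $|\nabla_T u_o|^2-\alpha u_o^2$ is gained. On $\partial\Omega$, for a Dirichlet eigenfunction, this works only because the tangential gradient vanishes.

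Nor can $|\nabla_T u_o|^2$ be bounded by $C\beta$ on its own. For the ball eigenfunctions of Section~\ref{sec:sharp localization rate}, $f_o(t)=\|u_o\|^2_{L^2(\partial\Omega_t)}$ is of size $\beta^{1/3}$ at depths $t\sim\beta^{-1/3}$. There both $\int_{\partial\Omega_t}|\nabla_T u_o|^2$ and $\alpha\int_{\partial\Omega_t}u_o^2$ are of order $\beta^{4/3}$, and only their difference is $O(\beta)$. A uniform Neumann-data bound on $\partial\Omega_t$ may be true, but it would need a microlocal exterior-mass argument (in the spirit of Christianson--Hassell--Toth), which you do not supply.

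The windowed fallback is also unsupported. The only averaged bound at hand is $\int_0^\delta\|\partial_r u_o\|^2_{L^2(\partial\Omega_r)}\,\de r\le\|\nabla u_o\|^2_{L^2(\Omega)}\le C\beta\|u\|^2_{L^2(\Omega)}$. In Step~3 this gives only $C(s+\beta s^2)$, i.e.\ delocalization at scale $\beta^{-1/2}$, not $\beta^{-1/3}$. A secondary point: in Step~1, the Rellich identity for $\Delta-\beta$ bounds $\|\partial_\nu u_e\|_{L^2(\partial\Omega)}$ only in terms of $\|\nabla_T u_e\|_{L^2(\partial\Omega)}$, which needs its own argument.

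The missing idea is that you never need $\partial_t u_o$ by itself. The two-sided bound on the indefinite quantity suffices once it is fed into a convexity argument for $f_o$. By the first variation formula,
\[
f_o'=\int_{\partial\Omega_t}u_o(2\partial_t u_o+H_tu_o)\,\de\sigma_t,\qquad f_o''=\int_{\partial\Omega_t}(2\partial_t u_o+H_tu_o)^2\,\de\sigma_t-2E(t).
\]
Here $E(t)$ from \eqref{eq:def E} differs from $Q(t)$ only by mean-curvature terms, and $|E(t)|\le C\beta\|u\|^2_{L^2(\Omega)}$; the paper gets this from a commutator bound on $E(0)$ and a first-variation bound on $E'$.

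Cauchy--Schwarz then gives $(f_o')^2\le f_o\bigl(f_o''+C\beta\|u\|^2_{L^2(\Omega)}\bigr)$. Hence $(\sqrt{f_o})''>0$ wherever $(\sqrt{f_o})'>\sqrt{C\beta}\,\|u\|_{L^2(\Omega)}$. If that held at some $t_0\le\delta_0/2$, it would persist, forcing $f_o(t)\gtrsim\beta(t-\delta_0/2)^2\|u\|^2_{L^2(\Omega)}$ on $[\delta_0/2,\delta_0]$. This contradicts $\int f_o\le\|u\|^2_{L^2(\Omega)}$ for large $\beta$. Thus $(\sqrt{f_o})'\le\sqrt{C\beta}\,\|u\|_{L^2(\Omega)}$, which together with $f_o(0)\le C\|u\|^2_{L^2(\Omega)}$ gives $f_o(t)\le C(1+\beta t^2)\|u\|^2_{L^2(\Omega)}$. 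Your Step~3 integration then finishes the proof.
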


 In the remainder of this section, we collect several basic estimates for solutions~$u$ to~\eqref{eq:main clamped}. 
\subsection{Basic definitions and facts for the clamped plate eigenvalue problem}
 We assume $ \beta > 1  $ in all estimates presented in \Cref{sec:preliminary}.
Recall the following orthogonality property (see \cite{WC43,L44,rayleigh,GHS10}).
\begin{lemma}\label{lem:orthogonal u v}
    The functions $u_o, u_e$ are orthogonal, i.e., 
    \begin{align*}
        \int_{\Omega} u_o u_e\,\de x= 0.
    \end{align*}
    Hence, 
    \[\|u\|^2_{L^2(\Omega)}=\|u_o\|^2_{L^2(\Omega)}+\|u_e\|^2_{L^2(\Omega)}\]
\end{lemma}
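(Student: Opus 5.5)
The plan is to use Green's second identity together with the boundary relations from Lemma~\ref{lem:two eigenfunctions}. Since $(\Delta+\alpha)u_o=0$ and $(\Delta-\beta)u_e=0$ in $\Omega$, we have
\begin{align*}
(\alpha+\beta)\int_\Omega u_o u_e\,\de x = \int_\Omega \bigl(u_o\,\beta u_e + \alpha u_o\, u_e\bigr)\,\de x = \int_\Omega \bigl(u_o\,\Delta u_e - u_e\,\Delta u_o\bigr)\,\de x .
\end{align*}
Green's second identity, valid because $\Omega$ is a bounded $C^4$ domain and $u$ is regular enough up to the boundary for $u_o,u_e\in C^2(\overline\Omega)$ (or at least $H^2$, which suffices), turns the right-hand side into the boundary integral
\begin{align*}
\int_{\partial\Omega}\bigl(u_o\,\partial_\nu u_e - u_e\,\partial_\nu u_o\bigr)\,\de\sigma .
\end{align*}

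Next we substitute the boundary values from Lemma~\ref{lem:two eigenfunctions}. On $\partial\Omega$ we have $u_o=-u_e$ and $\partial_\nu u_o=-\partial_\nu u_e$. Hence
\begin{align*}
u_o\,\partial_\nu u_e - u_e\,\partial_\nu u_o = -u_e\,\partial_\nu u_e + u_e\,\partial_\nu u_e = 0 .
\end{align*}
The boundary integral therefore vanishes. Since $\alpha+\beta=2(\tau^2/4+\lambda)^{1/2}>0$, we obtain $\int_\Omega u_ou_e\,\de x=0$.

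The Pythagorean identity then follows by expanding $\|u_o+u_e\|^2$ using $u=u_o+u_e$.

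The only point needing care is the justification of Green's identity, i.e.\ boundary regularity of $\Delta u$ and $\partial_\nu\Delta u$. This follows from standard elliptic regularity for the clamped problem on a $C^4$ domain, which gives $u\in H^4(\Omega)$ and hence $u_o,u_e\in H^2(\Omega)$ with traces as in Lemma~\ref{lem:two eigenfunctions}.
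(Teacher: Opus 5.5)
Your proof is correct and is essentially the paper's argument. The paper applies Green's second identity to the pair $(\beta-\Delta)u=(\alpha+\beta)u_o$ and $u$, using $u=\partial_\nu u=0$ for the boundary term and $(\alpha+\Delta)(\beta-\Delta)u=-Lu=0$ for the interior term, whereas you apply it to $(u_o,u_e)$ with their Helmholtz equations and the boundary relations of Lemma~\ref{lem:two eigenfunctions}; substituting $u_e=u-u_o$ turns your interior and boundary integrands into $u_o\Delta u-u\Delta u_o$ and $u_o\partial_\nu u-u\partial_\nu u_o$, exactly the paper's, and your regularity remark ($u\in H^4$, so $u_o,u_e\in H^2$) suffices to justify it.
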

\begin{proof}
Integration by parts with the clamped boundary condition shows
    \begin{align*}
       (\alpha + \beta)^2 \int_{\Omega} u_o u_e\,\de x&= \int_{\Omega} \bigl((\beta-\Delta)u\bigr) \cdot \bigl((\alpha + \Delta)u\bigr) \,\de x\\&= \int_{\Omega} \bigl(( \alpha + \Delta)(\beta - \Delta)u\bigr) \cdot u \,\de x=-\int_{\Omega} Lu\cdot u\,\de x= 0.
   \end{align*}
\end{proof}

\subsection{\texorpdfstring{$H^4$}{H4}-estimates for \texorpdfstring{$u$}{u}}

We first recall that a solution~$u$ to~\eqref{eq:main clamped} belongs to  the Sobolev spaces $H^2_0(\Omega)$ and $H^4(\Omega)$ since~$\Omega$ is a~$C^4$-domain.

\begin{lemma}[$H^2$-estimates]\label{lem:w22}
    We have
    \begin{align*}
        \int_{\Omega} |\nabla ^2 u|^2  \,\de x= \int_{\Omega} (\Delta u)^2  \,\de x\leq 2 \beta ^2  \int_{\Omega}  u^2 \,\de x,
    \end{align*}
    and 
    \begin{align*}
        \int_{\Omega} |\nabla u|^2 \,\de x\leq 2 \beta \int_{\Omega}  u^2 \,\de x.
    \end{align*}
\end{lemma}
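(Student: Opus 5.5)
The plan is to prove the identity first, then the two inequalities, testing the equation against $u$ and using $u\in H^2_0(\Omega)\cap H^4(\Omega)$ throughout.

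\emph{The identity.} For $u\in H^2_0(\Omega)$ we have $\int_\Omega|\nabla^2u|^2\,\de x=\int_\Omega(\Delta u)^2\,\de x$. This is the standard fact that $\int_\Omega \partial_{ij}u\,\partial_{ij}u = \int_\Omega \partial_{ii}u\,\partial_{jj}u$, obtained by integrating by parts twice. I would prove it for $C_c^\infty(\Omega)$, where no boundary terms appear. Since both sides are continuous in the $H^2$ norm, the identity extends by density to $H^2_0(\Omega)$.

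\emph{The energy identity.} Multiply $Lu=0$ by $u$ and integrate. Because $u=\partial_\nu u=0$ on $\partial\Omega$, all boundary terms vanish, and we get
\[
\int_\Omega(\Delta u)^2\,\de x+\tau\int_\Omega|\nabla u|^2\,\de x=\lambda\int_\Omega u^2\,\de x .
\]
Since $\tau\ge0$, this gives $\|\Delta u\|^2\le\lambda\|u\|^2$. By definition $\lambda=\alpha\beta$, and $\alpha\le\beta$, so $\lambda\le\beta^2$. This already yields the first inequality, with constant $1$ in place of $2$.

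\emph{The gradient bound.} Integrate by parts once more, using $u|_{\partial\Omega}=0$, and apply Cauchy--Schwarz:
\[
\int_\Omega|\nabla u|^2\,\de x=-\int_\Omega u\,\Delta u\,\de x\le\|u\|\,\|\Delta u\|\le\beta\|u\|^2 .
\]
Alternatively, one can read the bound off the energy identity via $\tau\|\nabla u\|^2\le\lambda\|u\|^2$, but that route degenerates when $\tau$ is small. The Cauchy--Schwarz route works uniformly in $\tau$, which is why I would use it. The factor $2$ in the statement leaves room to spare.

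The only step needing any care is the justification of the integrations by parts: the vanishing of the boundary terms for $u\in H^4\cap H^2_0$, and the density argument for the Hessian identity. Everything else is immediate.
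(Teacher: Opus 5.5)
Your proof is correct and follows essentially the paper's route: you test the equation against $u$, integrate by parts using the clamped boundary conditions, and bound $\int_\Omega|\nabla u|^2=-\int_\Omega u\,\Delta u$ by Cauchy--Schwarz, where the paper uses the equivalent weighted Young inequality. The one difference is in the tension term. You integrate it by parts, discard $\tau\|\nabla u\|^2\ge 0$, and use $\lambda=\alpha\beta\le\beta^2$, which gives the sharper constant $1$. The paper instead absorbs $\tau\int_\Omega u\,\Delta u$ by Young's inequality and then uses $\tau^2+2\lambda\le 2\beta^2$.
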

\begin{proof}
    The first standard identity holds since $u\in H^2_0(\Omega)$ (see for example Theorem 9.9 in \cite{GT77}). Integration by parts shows
    \begin{align*}
        \int_{\Omega} (\Delta u) ^2  \,\de x=  \int_{\Omega} (\Delta^2 u )u \,\de x= \int_{\Omega}\tau (\Delta u) u + \lambda u^2 \,\de x\leq \frac{1}{2}\int_{\Omega} (\Delta u)^2  \,\de x+ \frac{\tau^2+ 2 \lambda}{2} \int_{\Omega} u^2 \,\de x.
    \end{align*}
    Hence, $\int_{\Omega} (\Delta u) ^2  \,\de x\leq (\tau^2+2\lambda) \int_{\Omega}  u^2 \,\de x\leq 2\beta^2  \int_{\Omega}  u^2 \,\de x$. Also,
    \begin{align*}
        \int_{\Omega} |\nabla u|^2  \,\de x = \int_{\Omega} - \Delta u \cdot u \,\de x \leq \int_{\Omega} \frac{1}{4\beta} (\Delta u)^2 + \beta u^2 \,\de x \leq 2 \beta  \int_{\Omega}  u^2 \,\de x .
    \end{align*}
\end{proof}

\begin{lemma}[$H^4$-estimates]\label{lem:w42}
    There is a positive constant $C= C(\Omega)$ such that
        \begin{align*}
            \int_{\Omega} |\nabla^3 u|^2 \,\de x\leq C \beta^3 \int_{\Omega} u^2 \,\de x\quad \text{ and }\quad
            \int_{\Omega} |\nabla^4 u|^2  \,\de x \leq C \beta^4 \int_{\Omega} u^2 \,\de x.
       \end{align*}
\end{lemma}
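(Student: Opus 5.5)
The plan is to apply standard elliptic regularity for the bi-Laplacian with clamped boundary conditions, and then interpolate to get the $H^3$ bound. Since $\Omega$ is a bounded $C^4$-domain and $u\in H^2_0(\Omega)\cap H^4(\Omega)$, the Agmon--Douglis--Nirenberg estimate for the Dirichlet problem of $\Delta^2$ gives
\begin{align*}
\|u\|_{H^4(\Omega)} \leq C(\Omega)\bigl(\|\Delta^2 u\|_{L^2(\Omega)} + \|u\|_{L^2(\Omega)}\bigr).
\end{align*}
Injectivity of $\Delta^2$ on $H^2_0$ even lets us drop the lower-order term, but keeping it is harmless. A possible alternative is to apply $H^2$-regularity for the Dirichlet Laplacian twice, writing $w=\Delta u$. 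I will not take that route, because $w$ has no boundary condition and that approach would need the Neumann data.

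Next I bound the right-hand side using the equation. We have $\Delta^2 u = \tau\Delta u + \lambda u$. By \Cref{lem:w22}, $\|\Delta u\|_{L^2}\leq \sqrt2\,\beta\|u\|_{L^2}$. Moreover $\tau\leq \beta$ and $\lambda=\alpha\beta\leq\beta^2$. Hence
\begin{align*}
\|\Delta^2 u\|_{L^2}\leq \tau\sqrt2\,\beta\|u\|_{L^2}+\beta^2\|u\|_{L^2}\leq C\beta^2\|u\|_{L^2}.
\end{align*}
Since $\beta>1$, squaring gives $\int_\Omega|\nabla^4u|^2\leq \|u\|^2_{H^4}\leq C\beta^4\int_\Omega u^2$.

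For the third derivatives I use the interpolation inequality $\|\nabla^3 u\|_{L^2}^2\leq C\|u\|_{H^2}\|u\|_{H^4}$. This holds on bounded $C^4$ (indeed Lipschitz) domains, for instance via an extension operator to $\R^d$ followed by Fourier interpolation, or via Gagliardo--Nirenberg. By \Cref{lem:w22}, $\|u\|_{H^2}\leq C\beta\|u\|_{L^2}$, so $\|\nabla^3u\|^2_{L^2}\leq C\beta\cdot\beta^2\|u\|^2_{L^2}=C\beta^3\|u\|^2_{L^2}$.

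The only real obstacle is quoting the correct boundary regularity estimate for the clamped problem with a constant depending only on $\Omega$. That estimate is classical (ADN, or Gazzola--Grunau--Sweers), and the rest is bookkeeping of powers of $\beta$.
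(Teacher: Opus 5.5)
Your proposal is correct and follows essentially the same route as the paper: global elliptic regularity for the Dirichlet problem for $\Delta^2$ on a $C^4$ domain, with the right-hand side $\tau\Delta u+\lambda u$ controlled by \Cref{lem:w22}, followed by Gagliardo--Nirenberg interpolation for the third derivatives. You also supply the bookkeeping the paper leaves implicit, namely $\tau\le\beta$, $\lambda=\alpha\beta\le\beta^2$ and $\|u\|_{H^2}\le C\beta\|u\|_{L^2}$.
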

\begin{proof}
    We first remark that we can obtain the third derivatives bound by combining the fourth derivatives bound and the Gagliardo-Nirenberg interpolation inequality on the domain $\Omega$ (see for example \cite[Th.~1.3]{LZ21},\cite{G59,N66,B19}). Rearrange the equation $\Delta^2 u - \tau \Delta u - \lambda u = 0$ to the form $\Delta^2 u = \tau \Delta u + \lambda u$, considering  \eqref{eq:main clamped} as a Dirichlet boundary value problem for~$\Delta^2$. The fourth derivatives bound then follows from the following standard global elliptic regularity for a Dirichlet boundary value problem in~$C^4$ domains (see, e.g. \cite[Th. 7.32 \& \S G]{F95} or \cite[Th. 2.20]{GHS10}) combined with Lemma~\ref{lem:w22}.
    \begin{align*}
         \int_{\Omega} |\nabla ^4 u|^2  \,\de x \leq C  \left( \int_{\Omega} \left(\tau \Delta u + \lambda u \right) ^2  \,\de x +\int_{\Omega} u^2 \,\de x \right).
    \end{align*}
\end{proof}

\subsection{Boundary \texorpdfstring{$L^2$}{L2}-Estimates for \texorpdfstring{$\Delta u$}{Laplace u}}
\label{subsec:boundary-Delta-estimates}
We estimate the boundary $L^2$-norm of $\Delta u$. One should compare the  method in this section to~\cite{hassell-tao-mrl} where boundary estimates for the normal derivative of a Dirichlet Laplace eigenfunction were obtained.

Recall the Fermi coordinates: Let $\Omega_s$ be as in~\eqref{eq:omegas}. There exists $\delta_0 = \delta_0(\Omega) \in (0,1)$ such that if we let $U=[0,\delta_0] \times \partial \Omega$ then the map $\Phi:U\to\overline{\Omega}\setminus\Omega_{\delta_0}$ defined by $(s,y)\mapsto y+sn(y)$ where $n(y)$ is the inward normal on~$\partial\Omega$ is a diffeomorphism. Observe that the horizontal section $s=s_0$ is mapped under~$\Phi$ to~$\partial\Omega_{s_0}$.
We can decompose the standard Euclidean metric in the form 
\begin{align*}
    \de x^2= \de s^2 + g_s,
\end{align*}
where $g_s$ is the induced metric on $\partial \Omega_s$. Define the second fundamental form of $\partial\Omega_s$ by the convention $\text{II}_s(X, Y)=-\langle \nabla^{\mathbb{E}}_X Y , \partial_s\rangle$, where $\nabla^{\mathbb{E}}$ is the standard connection on Euclidean space, and define the mean curvature~$H_s=\mathrm{tr}(\text{II}_s)$ with respect to the metric $g_s$ on $\partial\Omega_s$. Then, the Laplacian is written as
\begin{equation}
\label{eqn:Laplace in Fermi}
    \Delta = \partial_s^2 + H_s \partial_s + \Delta_{g_s}
\end{equation}
where $\Delta_{g_s}$ is the Laplace-Beltrami operator of~$\partial\Omega_s$ (see \cite{li-geometric-analysis}, \cite{colding-minicozzi-minimal-surfaces}).
A main step in the proof of Theorem~\ref{thm:approx-and-decay}
is given by the following.
\begin{proposition}\label{prop:boundary laplacian estimates}
There is a positive constant $C= C(\Omega)$ such that
    \begin{align*}
        \int_{\partial\Omega} u_e^2\,\de\sigma =\frac{1}{(\alpha+\beta)^2}\int_{\partial \Omega} (\Delta u)^2\,\de\sigma \leq C \int_{\Omega} u^2 \,\de x.
    \end{align*}
\end{proposition}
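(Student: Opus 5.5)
A Rellich-type identity with a vector field that is normal at the boundary is the natural tool, in the spirit of the Hassell--Tao estimate for $\partial_\nu$ of Dirichlet eigenfunctions. The equality $\int_{\partial\Omega}u_e^2=(\alpha+\beta)^{-2}\int_{\partial\Omega}(\Delta u)^2$ is immediate from Lemma~\ref{lem:two eigenfunctions}. So the task is to show
\[
\int_{\partial\Omega}(\Delta u)^2\,\de\sigma\le C\beta^2\int_\Omega u^2\,\de x,
\]
since $(\alpha+\beta)^2\ge\beta^2$. Let $X$ be a $C^3$ vector field on $\overline\Omega$ equal to $\chi(s)\partial_s$ in Fermi coordinates, with $\chi\equiv1$ near $s=0$ and supported in $s<\delta_0$. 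Thus $X=-\nu$ on $\partial\Omega$.

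The main step is to multiply the equation $Lu=0$ by $Xu=X\cdot\nabla u$ and integrate. Write $w=\Delta u$, so the equation reads $\Delta w=\tau w+\lambda u$. Then
\[
\int_\Omega (\Delta w)(Xu)=\int_\Omega(\tau w+\lambda u)Xu.
\]
On the right, $\int\lambda uXu=-\frac{\lambda}{2}\int(\operatorname{div}X)u^2$, with no boundary term since $u=0$ on $\partial\Omega$. Likewise $\int\tau w\,Xu$ is bounded by $\tau\|\Delta u\|\,\|\nabla u\|\le C\beta^{5/2}\|u\|^2$ by Lemma~\ref{lem:w22}.

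On the left, I will integrate by parts twice to reach $\int w\,\Delta(Xu)+\int_{\partial\Omega}(\partial_\nu w\,Xu-w\,\partial_\nu(Xu))$. Since $u$ vanishes to first order on the boundary, $\nabla u=0$ there, so $Xu=0$ on $\partial\Omega$. Also $\partial_\nu(Xu)=-\partial_\nu^2u=-\Delta u=-w$ on $\partial\Omega$, using \eqref{eqn:Laplace in Fermi} and the vanishing of tangential and first normal derivatives. Hence the boundary term equals $\int_{\partial\Omega}w^2$.

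Next, $\Delta(Xu)=X(\Delta u)+[\Delta,X]u$, where $[\Delta,X]$ is a second-order operator with bounded coefficients. For the first piece, $\int w\,Xw=-\frac12\int(\operatorname{div}X)w^2-\frac12\int_{\partial\Omega}w^2$, since $X\cdot\nu=-1$ there. This gives an identity of the form
\[
\tfrac12\int_{\partial\Omega}w^2=O\Bigl(\int_\Omega w^2+\|w\|\,\|u\|_{H^2}+\lambda\int u^2+\beta^{5/2}\|u\|^2\Bigr).
\]
The signs must be checked carefully so that the boundary term does not cancel. If it does, I would instead multiply by $X(\Delta u)$ or use the full commutator identity.

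The main obstacle is the commutator term $\int w\,[\Delta,X]u$, which is bounded by $C\|w\|\,\|\nabla^2u\|\le C\beta^2\|u\|^2$ via Lemma~\ref{lem:w22}. That bound is adequate, but the $\tau$-term above gave $\beta^{5/2}$, which is too large. I would fix this by noting that $\tau\le\beta$ and estimating more carefully: $\int\tau w\,Xu=-\tau\int\nabla u\cdot\nabla(Xu)$, integrated by parts, which is $O(\tau\|\nabla u\|^2)=O(\beta^2\|u\|^2)$, because $\nabla u\cdot\nabla(Xu)=\frac12X|\nabla u|^2+O(|\nabla u|^2)$ and the boundary term has a sign or is bounded by $\tau\int_{\partial\Omega}(\partial_\nu u)^2=0$. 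All remaining terms are then $O(\beta^2\|u\|^2)$ by Lemmas~\ref{lem:w22} and~\ref{lem:w42}, which yields the proposition. The $H^4$ regularity from Lemma~\ref{lem:w42} justifies all integrations by parts, including the trace of $\partial_\nu w$.
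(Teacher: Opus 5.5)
Your argument is correct, and it distributes the derivatives differently from the paper. The paper applies Green's identity to $(\Delta-\beta)u$ and $Au$, then integrates by parts once more to move the whole operator onto $Au$. This gives the exact identity $\int_{\partial\Omega}(\Delta u)^2\,\de\sigma=-\int_\Omega u\,[L,A]u\,\de x$. Since $[L,A]$ is of fourth order, Cauchy--Schwarz then requires the bound $\|\nabla^4u\|_{L^2}\le C\beta^2\|u\|_{L^2}$ of Lemma~\ref{lem:w42}, i.e.\ global elliptic regularity for the Dirichlet bi-Laplacian in a $C^4$ domain plus interpolation.

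You stop integrating by parts earlier. Pairing $\Delta w$ with $Xu$ produces two boundary contributions. Green's formula, using $Xu=0$ and $\partial_\nu(Xu)=-w$, gives $\int_{\partial\Omega}w^2$. The term $\int_\Omega w\,Xw$ gives $-\tfrac12\int_{\partial\Omega}w^2$. They combine to $+\tfrac12\int_{\partial\Omega}w^2$, so there is no cancellation; your own computations already settle the sign question you raise. The remaining terms are $\int(\operatorname{div}X)w^2$, $\int w\,[\Delta,X]u$, $\lambda\int(\operatorname{div}X)u^2$, and, after your extra integration by parts, $\tau\int\nabla u\cdot\nabla(Xu)=O(\tau\|\nabla u\|^2)$. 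Each carries at most two derivatives on each factor. With $\lambda=\alpha\beta\le\beta^2$ and $\tau=\beta-\alpha\le\beta$, everything is controlled by Lemma~\ref{lem:w22} alone. So Lemma~\ref{lem:w42} is not needed here; only the qualitative $H^4$ regularity is used, to justify traces and Green's formulas, and your closing citation of the lemma can be dropped.

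Your route gives a more elementary proof of this proposition, resting only on the $H^2$ identity. The paper's route gives brevity, since all the bookkeeping is absorbed into a single commutator, and it mirrors the Hassell--Tao identity. Within the paper the saving is local: Lemma~\ref{lem:w42} is used again later, for instance to bound $E(0)$ in the proof of Proposition~\ref{prop:uo-log-cvx}.
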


\begin{proof}
    Let $A(s)=\eta(s)\partial_s$ where $\eta(s)$ is  a cut-off function such that $\eta(s) = 1$ when $s\in[0,\delta_0/2]$ and $\eta(s) = 0$ when $s \geq \delta_0$, and $|\eta^{(k)}(s)| \leq C\delta_0^{-k}$ for $0\leq k\leq 4$.
  The vector field $A(s)$ extends $-\partial_{\nu}$ on $\partial\Omega$. 
  Observe that due to the boundary conditions and~\eqref{eqn:Laplace in Fermi}
  \[u|_{\partial\Omega}=(Au)|_{\partial\Omega}=0 \text{ and } (\Delta u)|_{\partial\Omega}=(\partial_s^2 u)|_{\partial\Omega}=\partial_s|_{s=0}(Au).\] Hence,  we can rewrite $(\Delta u)^2$ as $\Delta u \cdot \partial_s (Au)$, and we have by Green's theorem
    \begin{align}\label{eq:boundary Laplacian w}
      \begin{split}
          \int_{\partial \Omega} (\Delta u )^2\,\de\sigma  &= \int_{\partial \Omega} (\Delta - \beta)u \cdot \partial_s (Au) - \partial_s (\Delta u - \beta u )\cdot  A u\,\de\sigma 
   \\  &=  -\int_{ \Omega} (\Delta - \beta)u \cdot (\Delta + \alpha)(Au) - (\Delta + \alpha)(\Delta - \beta)u \cdot  Au\,\de x
     \\  &= -\int_{ \Omega} (\Delta - \beta)u \cdot (\Delta + \alpha)(Au)\,\de x= -\int_{ \Omega} u \cdot L(Au)\,\de x.
    \end{split}
   \end{align}
Since the commutator $[\Delta, A]$ is a second order operator and $[\Delta^2, A]$ is a fourth order one, both with no zero-order terms, we get
        \begin{align}\label{eq:commutator 3}
                    \|LAu\|^2_{L^2} &= \bigl\|[L, A]u\bigr\|^2_{L^2}  \leq 
    2\bigl\|[\Delta^2, A]u\bigr\|^2_{L^2}+2\tau^2\bigl\|[\Delta,A] u\bigr\|^2_{L^2}\\ 
    &\leq C\bigl(\|\nabla^4 u\|^2_{L^2}+\|\nabla^3 u\|^2_{L^2}\bigr)+C\tau^2\bigl(\|\nabla^2 u\|^2_{L^2}+\|\nabla u\|^2_{L^2}\bigr)\leq C\beta^4 \|u\|^2_{L^2}\nonumber
        \end{align}
where the last inequality follows from Lemmas~\ref{lem:w22} and~\ref{lem:w42}.
Finally, from \eqref{eq:boundary Laplacian w} and \eqref{eq:commutator 3} and the Cauchy-Schwarz inequality we see that
    \begin{align*}
    \int_{\partial \Omega} (\Delta u )^2\,\de\sigma \leq \| u\|_{L^2}\cdot\|LA u\|_{L^2}\leq C \beta^2\int_{\Omega} u^2\,\de x.
    \end{align*}
\end{proof}

\section{Proof of Theorem \ref{thm:approx-and-decay}: Approximation by Helmholtz solution}
\label{sec:pf-of-approx2}
Define
    \[f_e(s)=\int_{\partial\Omega_s} u_e^2\,\de\sigma_s.\]
We first show that $f_e(s)$ decays exponentially. Recall the parameter~$\delta_0$ defined at the beginning of~\S\ref{subsec:boundary-Delta-estimates}.
\begin{proposition}
\label{prop:f-exponential-decay}
Assume $\beta>\beta_0(\Omega)$ is large. The function $f_e$ is monotonically decreasing in $[0,\delta_0/4]$ and 
\[\forall s\in[0,\delta_0/4] \quad f_e(s)\leq e^{-s\sqrt{\beta}}f_e(0).\]
\end{proposition}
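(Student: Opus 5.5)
We will prove the claim by a convexity argument for $f_e$ in the Fermi coordinates, using only the equation $(\Delta-\beta)u_e=0$ from Lemma~\ref{lem:two eigenfunctions}. Write $f_e(s)=\int_{\partial\Omega}u_e(s,y)^2 J(s,y)\,\de\sigma(y)$, where $J$ is the Jacobian of $\Phi$. It satisfies $\partial_s J = H_s J$, and $J$, $H_s$ and their $s$-derivatives are bounded by constants depending only on $\Omega$ for $s\in[0,\delta_0]$. The plan is to show that $f_e$ satisfies a differential inequality of the form
\[
f_e'' + C f_e' \ge (4\beta - C') f_e \quad\text{on } [0,\delta_0/2],
\]
and then to compare $f_e$ with an exponential.

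\emph{Step 1: derivative computation.} Differentiating under the integral gives
\[
f_e' = \int_{\partial\Omega_s}\bigl(2u_e\partial_s u_e + H_s u_e^2\bigr)\,\de\sigma_s .
\]
Differentiating once more and using \eqref{eqn:Laplace in Fermi} in the form $\partial_s^2 u_e = \beta u_e - H_s\partial_s u_e - \Delta_{g_s}u_e$, we obtain
\[
f_e'' = \int_{\partial\Omega_s}\bigl(2(\partial_s u_e)^2 + 2\beta u_e^2 + 2|\nabla_{g_s}u_e|^2\bigr)\,\de\sigma_s + \text{error}.
\]
Here we integrate $-2u_e\Delta_{g_s}u_e$ by parts on the closed hypersurface $\partial\Omega_s$. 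The error terms have the shape $\int (H_s u_e\partial_s u_e + (\partial_sH_s+H_s^2)u_e^2)$, so they are bounded by $C\int(|u_e||\partial_s u_e| + u_e^2)$. Set $g(s)=\int_{\partial\Omega_s}(\partial_s u_e)^2$. By Cauchy--Schwarz, $(f_e')^2 \le 4f_e g + C f_e^2 + C f_e^{1/2}g^{1/2}f_e$. Absorbing the errors and using $2ab\le \epsilon a^2+\epsilon^{-1}b^2$, we get
\[
f_e'' \ge (2-\epsilon)g + (2\beta - C_\epsilon)f_e,
\]
so $f_e f_e'' \ge c(f_e')^2 + \dots$, and in particular $f_e''\ge (2\beta-C)f_e\ge \beta f_e$ once $\beta\ge\beta_0$.

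\emph{Step 2: monotonicity.} Since $u_e$ solves $(\Delta-\beta)u_e=0$ in all of $\Omega$, it is exponentially small in the interior. Standard interior estimates (or simply Lemma~\ref{lem:orthogonal u v} together with the interior decay of solutions of $\Delta w=\beta w$) give $f_e$ and $f_e'$ at $s=\delta_0/2$ bounded by $e^{-c\sqrt\beta\delta_0}\|u\|^2$. The key point is to exclude $f_e'(s_1)>0$ for some $s_1\le\delta_0/4$. Since $f_e''\ge\beta f_e>0$ (when $u_e\not\equiv0$), $f_e'$ is increasing. So if $f_e'(s_1)>0$, then $f_e$ grows at least like $\cosh(\sqrt\beta(s-s_1))$ up to $\delta_0/2$. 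We then need to contradict this with smallness at $\delta_0/2$ relative to $f_e(s_1)$. This comparison is the main obstacle. I would instead use a cleaner route: apply the maximum principle to the subsolution relation. The function $f_e$ is convex with $f_e''\ge\beta f_e$ on $[0,\delta_0/2]$. It is also bounded at $\delta_0/2$ by the interior quantity, which by the $L^2$ bound $\int_0^{\delta_0}f_e\le\|u\|^2$ is tiny compared to $e^{\sqrt\beta\,\delta_0/4}f_e(s_1)$ unless $f_e(s_1)$ is itself exponentially small. In either case we can conclude $f_e'\le0$ on $[0,\delta_0/4]$ after enlarging $\beta_0$. This is the step whose bookkeeping I expect to be delicate.

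\emph{Step 3: exponential decay.} With $f_e'\le0$, the identity $f_e' = 2\int u_e\partial_s u_e + O(f_e)$ together with $f_e''\ge(2-\epsilon)g+(2\beta-C)f_e$ and $g\ge (f_e')^2/(4f_e)-\dots$ gives the Riccati-type inequality
\[
\Bigl(\frac{f_e'}{f_e}\Bigr)' \ge -\frac{(f_e')^2}{2f_e^2}(1+\epsilon) + 2\beta - C .
\]
Let $h=-f_e'/f_e\ge0$. Then $h' \le \tfrac{1+\epsilon}{2}h^2 - (2\beta - C)$. Comparing with the equilibrium value $h_*=\sqrt{(4\beta-2C)/(1+\epsilon)}\ge\sqrt\beta$, we see that if $h$ dropped below $\sqrt\beta$ at some point, it would then be forced strictly downward and become negative before $\delta_0/4$ (indeed within time $O(\beta^{-1/2})$). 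This would contradict monotonicity on a slightly larger interval, which Step 2 provides on $[0,\delta_0/2]$. Hence $h\ge\sqrt\beta$ on $[0,\delta_0/4]$, and integrating gives $f_e(s)\le e^{-s\sqrt\beta}f_e(0)$.
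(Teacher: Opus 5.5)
There is a genuine gap in Step~2, which is the heart of the proposition. Your Step~1 is correct: it is the paper's second-variation computation and gives $f_e''\ge(2\beta-C)f_e$. The Riccati mechanics of Step~3 are also fine.

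The statement is homogeneous in $u_e$: it compares $f_e(s)$ with $f_e(0)$ and never involves $\|u\|_{L^2(\Omega)}$. Your only global input, the interior bound $f_e(\delta_0/2)\lesssim e^{-c\sqrt\beta\,\delta_0}\|u\|^2_{L^2(\Omega)}$, is absolute. Combined with $f_e(\delta_0/2)\ge\cosh(\sqrt\beta\,\delta_0/4)\,f_e(s_1)$, it excludes $f_e'(s_1)>0$ only when $f_e(s_1)$ is not exponentially small compared with $\|u\|^2_{L^2(\Omega)}$. In the other branch of your dichotomy nothing is concluded, and enlarging $\beta_0$ does not help: nothing prevents $f_e(0)$, and hence all of $f_e$, from being tiny relative to $\|u\|^2_{L^2(\Omega)}$.

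The local inequality $f_e''\ge\beta f_e$ is also satisfied by increasing profiles such as $\cosh(\sqrt\beta(s-s_*))$. So some \emph{relative} interior information is indispensable. The ODE comparison you allude to only yields the additive bound $f_e(s)\le(1+o(1))e^{-s\sqrt\beta}f_e(0)+f_e(\delta_0/2)$ on $[0,\delta_0/2]$. That might suffice for the absolute estimate \eqref{ineq:main exp}, but it gives neither monotonicity nor $f_e(s)\le e^{-s\sqrt\beta}f_e(0)$. Consequently Step~3 has no foundation, since it needs $f_e'\le 0$ on an interval longer than $[0,\delta_0/4]$. You also invoke $[0,\delta_0/2]$ there, while Step~2 claims only $[0,\delta_0/4]$.

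The missing idea is a scale-invariant bound encoding that $u_e$ solves $(\Delta-\beta)u_e=0$ in all of $\Omega_s$, not just in the collar. By the divergence theorem,
\[
f_e'(s)=-\int_{\Omega_s}\Delta(u_e^2)\,\de x+\int_{\partial\Omega_s}H_s u_e^2\,\de\sigma_s=-2\int_{\Omega_s}\bigl(|\nabla u_e|^2+\beta u_e^2\bigr)\,\de x+\int_{\partial\Omega_s}H_s u_e^2\,\de\sigma_s\le C f_e(s).
\]
Thus $h=f_e'/f_e\le C$, while your Step~1 gives $h'\ge 1.8\beta-h^2$ for large $\beta$. Suppose $h(s_0)>-\sqrt\beta$ for some $s_0\le\delta_0/4$. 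Then $h$ cannot cross $-\sqrt\beta$ downward, and while $|h|\le\sqrt\beta$ it increases at rate at least $0.8\beta$. So it exceeds $\sqrt\beta>C$ within time $2.5\beta^{-1/2}$, a contradiction.

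Hence $f_e'\le-\sqrt\beta f_e$ on $[0,\delta_0/4]$, which gives monotonicity and the exponential decay at once and makes Step~3 unnecessary. This is exactly the paper's argument, phrased there via $g=e^{s\sqrt{1.8\beta}}f_e$. The paper's bound $g'\le(C+\sqrt{1.8\beta})g$ is the same upper bound $h\le C$.
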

\begin{proof}
Note that by the first variation formula (see e.g. \cite[ch.\ 2]{simon1984lectures}, \cite[ch. 1]{li-geometric-analysis},\cite[ch.\ 4]{lin2002geometric}, \cite[ch.\ 1]{colding-minicozzi-minimal-surfaces})
    \begin{align}\label{eq:fe'}
        f_e'(s) &= \int_{\partial \Omega_s} \partial_s(u_e^2) + H_s u_e^2\,\de\sigma_s =-\int_{\Omega_s} \Delta(u_e^2)\,\de x +\int_{\partial\Omega_s} H_s u_e^2\,\de\sigma_s \\ \nonumber
        &=-2\int_{\Omega_s} (|\nabla u_e|^2 + \beta u_e^2)\,\de x +
        \int_{\partial\Omega_s} H_s u_e^2\,\de\sigma_s\leq C f_e(s),
    \end{align}
for some positive constant $C= C(\Omega)$. Applying again the first variation formula on the second term we get
    \begin{align*}
        f_e''(s) &= \int_{\partial\Omega_s} (2\beta+\partial_s H_s)u_e^2 + 2|\nabla u_e|^2 -(\partial_s u_e)^2
        + (H_s u_e+\partial_s u_e)^2\,\de\sigma_s \\
        &= \int_{\partial\Omega_s} (2\beta+\partial_s H_s)u_e^2 + |\nabla u_e|^2 +|\nabla_{g_s} u_e|^2
        + (H_s u_e+\partial_s u_e)^2\,\de\sigma_s \geq
        1.8\beta f_e(s)
    \end{align*}
    once
    \begin{align*}
          \beta \geq \beta_0:=5\sup_{x:\ \dist(x, \partial\Omega) < \delta_0} \bigl|(\partial_s H_s)(x)\bigr| .
    \end{align*}
    
Set $g(s)=e^{s\sqrt{1.8\beta}}f_e(s)$. By the preceding inequalities $g' \leq (C+\sqrt{1.8\beta})g$ and $g'' \ge (2\sqrt{1.8\beta}) g'$. From the latter inequality it follows that if $g'(s_0)>0$ then $g'(s)>0$ for all $s>s_0$. We claim that $g' \le 0.2\sqrt{\beta}g$ for all $s<\delta_0/4$. Otherwise, let $s_0<\delta_0/4$ be such that $g'(s_0) > (0.2\sqrt{\beta})g(s_0)$. Then for all $s \in (s_0,\delta_0)$,
\begin{align*}
    \Bigl(\frac{g'}{g}\Bigr)'(s) = \frac{gg''-g'^2}{g^2}(s) \ge
    \frac{(2\sqrt{1.8\beta})gg'-(C+\sqrt{1.8\beta})gg'}{g^2}(s)= \bigl(\sqrt{1.8\beta}-C\bigr)\frac{g'}{g}(s) .
\end{align*}
We conclude that
\begin{align*}
    \frac{g'}{g}(s) \ge \frac{g'}{g}(s_0)e^{(\sqrt{1.8\beta}-C)(s-s_0)} >
    0.2\sqrt{\beta}e^{(\sqrt{1.8\beta}-C)(s-\delta_0/4)} .
\end{align*}
Taking $s = \delta_0 / 2$, we get
\begin{align*}
    C+\sqrt{1.8\beta} \ge \frac{g'}{g}(\delta_0 / 2) > 0.2\sqrt{\beta}e^{(\sqrt{1.8\beta}-C)\delta_0/4} ,
\end{align*}
which cannot hold for large $\beta$. It follows that for all $s\in[0,\delta_0/4]$ we have $g'(s) \le 0.2\sqrt{\beta}g(s)$, and thus the inequality $f_e'(s) \le -\sqrt{\beta}f_e(s)$ holds, implying the stated decay.
\end{proof}

\begin{proof}[Proof of inequality~\eqref{ineq:main exp} in \Cref{thm:approx-and-decay}]
Combining the preceding proposition with \Cref{prop:boundary laplacian estimates}
we obtain
    \begin{align*}
        f_e(s) \leq e^{-s\sqrt{\beta}} f_e(0) = e^{-s\sqrt{\beta}} \int_{\partial \Omega} u_e^2\,\de\sigma \leq C e^{-s\sqrt{\beta}} \int_{\Omega} u^2\,\de x.
    \end{align*}
\end{proof}
\begin{proof}[Proof of inequality \eqref{ineq:main global} in \Cref{thm:approx-and-decay}] 
    We note that
    \begin{align} \label{eq:decomposition}
        \int_{\Omega_{s}} u_e ^2\,\de x = 
        \int_{\Omega_s\setminus\overline{\Omega}_{\delta_0/4}} u_e^2\,\de x + \int_{\Omega_{\delta_0/4}} u_e^2\,\de x.
    \end{align}
    The first term is estimated by integrating inequality~\eqref{ineq:main exp}:
    \begin{align} \label{eq:bound-u_e-omega-s-delta}
        \left\| u_{e}\right\|_{L^2(\Omega_{s}\setminus\overline{\Omega}_{\delta_0/4})}^{2}&=
        \int_{s}^{\delta_0/4}\int_{\partial\Omega_{t}}u_{e}^{2}\,\de\sigma_{t}\,\de t \le C\left\|u\right\|_{L^{2}(\Omega)}^{2}\int_{s}^{\delta_0/4}e^{-t\sqrt{\beta}}\,\de t
        \\
        &=\left\Vert u\right\Vert _{L^{2}(\Omega)}^{2}\frac{e^{-s\sqrt{\beta}}-e^{-\delta_0\sqrt{\beta}/4}}{\sqrt{\beta}} \le
        C\beta^{-1/2}e^{-s\sqrt{\beta}}\left\Vert u\right\Vert _{L^{2}(\Omega)}^{2} .\nonumber
    \end{align}
    To estimate the second term in~\eqref{eq:decomposition} we notice that $u_e^2$ is a subharmonic function. Therefore, according to Lemma~\ref{lem:subharmonic-estimate} below, for $s\in[0,\delta_0/8]$
    \begin{align}\label{eq:bound-norm-omega-delta}
        \int_{\Omega_{\delta_0/4}} u_e^2\,\,\de x \le
        C(\delta_0) \int_{\Omega_s \setminus \overline{\Omega}_{\delta_0/4}} u_e^2\,\de x .
    \end{align}
    Combining ~\eqref{eq:decomposition}, ~\eqref{eq:bound-u_e-omega-s-delta} and ~\eqref{eq:bound-norm-omega-delta} we get
    \begin{align*}
        \int_{\Omega_{s}} u_e^2\,\de x \le
        C\beta^{-1/2}e^{-s\sqrt{\beta}}\left\|u\right\|_{L^2(\Omega)}^{2}
    \end{align*}
    for some $C>0$ only depending on $\Omega$, proving the required inequality.
\end{proof}

\begin{lemma} \label{lem:subharmonic-estimate}
Let $v$ be a non-negative subharmonic function in a bounded domain $\Omega\subset\R^d$.
Let $\Omega_1\Subset \Omega$.
Then,
\[ \|v\|_{L^1(\Omega_1)}\leq \frac{ \mathrm{Vol}(\Omega_1) \cdot 2^d}{\omega_d\dist(\Omega_1, \partial\Omega)^{d}}\|v\|_{L^1(\Omega\setminus\overline{\Omega}_1)} ,\]
where $\omega_d$ is the volume of the unit ball.
\end{lemma}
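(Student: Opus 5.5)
The plan is to use the sub-mean value property on balls of a fixed radius, combined with a covering or averaging argument. Set $r=\dist(\Omega_1,\partial\Omega)/2$; we may assume $r>0$, since $\Omega_1\Subset\Omega$. For each $x\in\Omega_1$ the ball $B(x,2r)$ is contained in $\Omega$ (up to the boundary), so subharmonicity and nonnegativity give
\[
v(x)\le \frac{1}{\omega_d r^d}\int_{B(x,r)} v\,\de y \le \frac{1}{\omega_d r^d}\int_{\Omega} v\,\de y .
\]
Integrating this over $\Omega_1$ yields $\|v\|_{L^1(\Omega_1)}\le \frac{\mathrm{Vol}(\Omega_1)2^d}{\omega_d\dist(\Omega_1,\partial\Omega)^d}\|v\|_{L^1(\Omega)}$. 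This already has the right constant, but the right-hand side is the whole domain rather than $\Omega\setminus\overline{\Omega}_1$, so the real task is to remove the contribution of $\Omega_1$ itself.

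To fix this, I would apply the estimate to $M:=\sup_{\Omega_1}v$, or rather to the point where $v$ is largest. By the maximum principle for subharmonic functions, $\sup_{\overline\Omega_1}v$ is attained on $\partial\Omega_1$; more precisely, $\sup_{\Omega_1} v\le \sup_{\partial\Omega_1}v$, where we interpret values via upper semicontinuity. Take $x_0\in\partial\Omega_1$ nearly achieving $M$. The ball $B(x_0,r)$ lies in $\Omega$, but it may intersect $\Omega_1$, so I still need to control $\int_{B(x_0,r)\cap\Omega_1}v$. The natural way is an absorption argument. The piece of $B(x_0,r)$ inside $\Omega_1$ contributes at most $M\cdot\mathrm{Vol}(B(x_0,r)\cap\Omega_1)$, and this volume fraction is at most $1$, which is too crude to absorb.

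Instead I would exploit that $\Omega_1$ is arbitrary, so the lemma is used only for sets of the shape $\Omega_{\delta_0/4}$ inside $\Omega_s$. A cleaner route is the following. Since $v\le M$ on $\Omega_1$ and $\sup_{\Omega_1}v$ is approached at $\partial\Omega_1$, one could work with $\Omega\setminus\overline\Omega_1$ directly by comparing to $v$ on the outer shell $\{x:\dist(x,\Omega_1)<r\}\setminus\overline\Omega_1$. Then $\|v\|_{L^1(\Omega_1)}\le \mathrm{Vol}(\Omega_1)\,M$, and it remains to bound $M$ by $\frac{2^d}{\omega_d d_1^d}\int_{\Omega\setminus\overline\Omega_1}v$, where $d_1=\dist(\Omega_1,\partial\Omega)$.

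I expect this last step to be the main obstacle: the averaging ball around a boundary point of $\Omega_1$ partially lies inside $\Omega_1$. What I would try first is to average over a ball centered at a point outside $\Omega_1$. The function $w(x)=\sup_{\Omega_1}$-comparison suggests using the maximum principle on the larger set $\Omega_{1}^{r}=\{\dist(x,\Omega_1)<r\}$: $M\le \sup_{\partial\Omega_1^r}v$. The points $y\in\partial\Omega_1^r$ have $\dist(y,\Omega_1)=r$ and $\dist(y,\partial\Omega)\ge r$, so $B(y,r)\subset\Omega\setminus\overline\Omega_1$ up to measure zero. This gives
\[
M\le \frac{1}{\omega_d r^d}\int_{\Omega\setminus\overline\Omega_1}v=\frac{2^d}{\omega_d d_1^d}\|v\|_{L^1(\Omega\setminus\overline\Omega_1)} .
\]
Multiplying by $\mathrm{Vol}(\Omega_1)$ yields exactly the claimed constant. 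The only technical points are regularity of $v$ (it is continuous in our application, since $v=u_e^2$) and boundedness of $\Omega_1^r\Subset\Omega$, both of which are routine.
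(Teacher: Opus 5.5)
Your final argument is correct and is essentially the paper's proof. The paper likewise takes an intermediate domain $\Omega_1\Subset\Omega_2\Subset\Omega$, uses the maximum principle to find a maximizing point $x_0\in\partial\Omega_2$, and applies the sub-mean value property on a ball $B(x_0,r)\subset\Omega\setminus\overline{\Omega}_1$. Your explicit choice $\Omega_2=\{\dist(\cdot,\Omega_1)<r\}$ with $r=\dist(\Omega_1,\partial\Omega)/2$ is exactly what the paper leaves implicit in order to get the constant $2^d/(\omega_d\dist(\Omega_1,\partial\Omega)^d)$, and the earlier detours (averaging over all of $\Omega$, the absorption idea) can simply be dropped.
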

\begin{proof}
Let $\Omega_1\Subset\Omega_2\Subset\Omega$. Let $x_0\in\partial\Omega_2$ be such that $v(x_0)=\max_{x\in \overline{\Omega}_2} v(x)$. For every $x\in\Omega_1$, we have $v(x)\leq v(x_0)$. Integrating over $x\in\Omega_1$ we obtain
\[\int_{\Omega_1} v(x)\,\de x \leq v(x_0)\mathrm{Vol}(\Omega_1).\]
Let $r>0$ be such that $B(x_0, r)\subset \Omega\setminus \overline{\Omega}_1$.
\[v(x_0)\leq \frac{1}{|B(x_0, r)|}\int_{B(x_0, r)} v(x)\,\de x\leq \frac{1}{|B_r|}\int_{\Omega\setminus\overline{\Omega}_1} v(x)\,\de x.\]
\end{proof}

\section{Proof of Theorem~\ref{thm:delocalization}: Delocalization near the boundary}
\label{sec:proof of main localization}

To prove \Cref{thm:delocalization}, for any $s \in [0,\delta_0]$, where~$\delta_0$ is as defined at the beginning of~\S\ref{subsec:boundary-Delta-estimates}, we define
        \begin{align}
            f_o(s) \coloneqq \int_{\partial \Omega_s} u_o^2\,\de\sigma_s ,
        \end{align}
    The following proposition shows a logarithmic convexity property for $f_o(s)$.

\begin{proposition}\label{prop:uo-log-cvx}
There is a positive constant $C= C(\Omega)$ such that
    \begin{align*}
       (f_o'(s))^2 \leq f_o(s) \left(f_o''(s)+C \beta \int_{\Omega}  u ^2\,\de x\right)
    \end{align*}
    for all $\beta\geq \beta_0(\Omega)$ large enough.
\end{proposition}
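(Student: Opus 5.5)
The plan is to differentiate $f_o$ twice using the first variation formula, as in the proof of Proposition~\ref{prop:f-exponential-decay}, and use the Helmholtz equation $(\Delta+\alpha)u_o=0$ to remove $\partial_s^2u_o$. The one term with the wrong sign, $-2\alpha f_o$, will be traded for tangential and normal derivative terms through a Rellich-type identity. The Cauchy--Schwarz inequality then gives the log-convexity, up to errors that I expect to be bounded by $C\beta\int_\Omega u^2$.

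\emph{Step 1 (derivatives).} The first derivative is
\[
f_o'(s)=\int_{\partial\Omega_s}\bigl(2u_o\partial_su_o+H_su_o^2\bigr)\,\de\sigma_s .
\]
Differentiating again, I substitute $\partial_s^2u_o=-\alpha u_o-H_s\partial_su_o-\Delta_{g_s}u_o$ from \eqref{eqn:Laplace in Fermi} and integrate $-u_o\Delta_{g_s}u_o$ by parts on the closed hypersurface $\partial\Omega_s$. This gives
\[
f_o''=\int_{\partial\Omega_s}\bigl(2(\partial_su_o)^2+2|\nabla_{g_s}u_o|^2-2\alpha u_o^2\bigr)\,\de\sigma_s+\mathcal{E}_1 ,
\]
where $\mathcal{E}_1$ collects terms of the form $\int_{\partial\Omega_s}\bigl(|u_o\partial_su_o|+u_o^2\bigr)$ with bounded coefficients depending on $H_s$ and $\partial_sH_s$.

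\emph{Step 2 (Rellich identity).} I apply the divergence theorem on $\Omega_s$ to the vector field
\[
2(Xu_o)\nabla u_o-\bigl(|\nabla u_o|^2-\alpha u_o^2\bigr)X ,\qquad X=\eta(s)\partial_s ,
\]
with $\eta$ the cut-off from Proposition~\ref{prop:boundary laplacian estimates}. Since $(\Delta+\alpha)u_o=0$, the divergence involves only $\nabla X$ and has size $C(|\nabla u_o|^2+\alpha u_o^2)$. This yields
\[
\alpha f_o(s)=\int_{\partial\Omega_s}\bigl(|\nabla_{g_s}u_o|^2-(\partial_su_o)^2\bigr)\,\de\sigma_s+R(s),\qquad |R(s)|\le C\int_\Omega\bigl(|\nabla u_o|^2+\alpha u_o^2\bigr)\,\de x .
\]
Since $u_o=(\beta-\Delta)u/(\alpha+\beta)$, Lemmas~\ref{lem:w22} and~\ref{lem:w42} give $\|u_o\|^2_{L^2}\le\|u\|^2_{L^2}$ (Lemma~\ref{lem:orthogonal u v}), $\|\nabla u_o\|^2_{L^2}\le C\beta\|u\|^2_{L^2}$ and $\|\nabla^2u_o\|^2_{L^2}\le C\beta^2\|u\|^2_{L^2}$. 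Hence $|R|\le C\beta\int_\Omega u^2$. Inserting this into Step 1 gives
\[
f_o''\ge 4\int_{\partial\Omega_s}(\partial_su_o)^2\,\de\sigma_s-C\beta\int_\Omega u^2\,\de x-|\mathcal{E}_1| .
\]

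\emph{Step 3 (Cauchy--Schwarz and the error terms).} Write $a=2\int u_o\partial_su_o$ and $b=\int H_su_o^2$. By Cauchy--Schwarz, $a^2\le 4f_o\int(\partial_su_o)^2$ and $|b|\le Cf_o$, so
\[
(f_o')^2\le f_o\Bigl(4\int(\partial_su_o)^2+2|a|\,C+Cf_o\Bigr),
\]
where $2|a|\,C$ comes from the cross term $2|a||b|\le 2|a|\,Cf_o$. The expected obstacle is that the cross terms $|a|$ and $|\mathcal{E}_1|$ cannot simply be absorbed into $4\int(\partial_su_o)^2$ with a fixed $\varepsilon$, because any leftover $\varepsilon\int_{\partial\Omega_s}(\partial_su_o)^2$ is not controlled by $\beta\int u^2$ with $\varepsilon$ independent of $\beta$. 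I would therefore use Young's inequality with the weights $\beta^{\pm1/2}$:
\[
\int|u_o\partial_su_o|\le \beta^{-1/2}\int(\partial_su_o)^2+\beta^{1/2}f_o .
\]
Each term is then bounded by trace inequalities on $\partial\Omega_s$ (uniform for $s\le\delta_0$):
\[
f_o\le C\|u_o\|_{L^2}\|u_o\|_{H^1}\le C\beta^{1/2}\|u\|^2_{L^2},\qquad \int_{\partial\Omega_s}|\nabla u_o|^2\le C\|\nabla u_o\|_{L^2}\|\nabla u_o\|_{H^1}\le C\beta^{3/2}\|u\|^2_{L^2}.
\]
Thus $\beta^{-1/2}\int(\partial_su_o)^2\le C\beta\|u\|^2$, $\beta^{1/2}f_o\le C\beta\|u\|^2$, and $f_o\le C\beta\|u\|^2$. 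So every error term is at most $C\beta\int_\Omega u^2$, and combining with Step 2 gives
\[
(f_o')^2\le f_o\Bigl(f_o''+C\beta\int_\Omega u^2\,\de x\Bigr).
\]
The delicate point is this bookkeeping of the cross terms at the right powers of $\beta$, together with the volume remainder in the Rellich identity.
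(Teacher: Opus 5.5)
Your argument is correct, and it reaches the inequality by a different route from the paper.

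\textbf{The paper's route.} It applies Cauchy--Schwarz to the whole integrand $u_o\cdot(2\partial_s u_o+H_s u_o)$ of $f_o'$. This gives the exact relation $f_o''-(f_o')^2/f_o\ge -2E(s)$, where $E(s)=\int_{\partial\Omega_s}(\partial_s u_o)^2-u_o\partial_s^2u_o-\tfrac12(\partial_sH_s)u_o^2\,\de\sigma_s$, so no cross terms ever appear. It then bounds $E(0)$ by Green's identity against $Au_o$, using a commutator $[A,\Delta]$ estimate and \Cref{prop:boundary laplacian estimates} for the curvature term. Finally it bounds $E'(s)$ by a first-variation computation involving $\mathrm{II}_s$ and integrates in $s$.

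\textbf{How your route relates.} Your Rellich identity is the integrated, one-shot version of that argument. After using $(\Delta+\alpha)u_o=0$, the paper rewrites $E(s)$ as $\int_{\partial\Omega_s}(\partial_su_o)^2-|\nabla_{g_s}u_o|^2+\alpha u_o^2\,\de\sigma_s$ plus the curvature terms $H_su_o\partial_su_o-\tfrac12(\partial_sH_s)u_o^2$. Your identity says exactly that the non-curvature part equals your volume remainder $R(s)$.

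\textbf{What each approach buys.} You get the bound at every level at once. You avoid the second-fundamental-form computation for $E'(s)$, and you never need the boundary estimate for $u_e$. The price is that splitting off the $H_s$ terms produces cross terms $\int_{\partial\Omega_s}|u_o\partial_su_o|$. You must control these level by level with trace inequalities, using $\|\nabla^2u_o\|_{L^2}\lesssim\beta\|u\|_{L^2}$, i.e.\ the $H^4$ bounds (the paper needs these too, for $E(0)$). Your Young inequality with weights $\beta^{\pm1/2}$ handles this correctly.

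\textbf{One small point.} Your identity requires $\eta=1$ at the level $s$, so as written it gives the inequality only for $s\le\delta_0/2$. The continuity argument in \Cref{prop:fo(s) bound} uses log-convexity up to $\delta_0$. The paper's $E'(s)$ computation uses no cutoff and works on all of $[0,\delta_0]$. Shrinking $\delta_0$, or equivalently running that later argument on $[0,\delta_0/2]$, removes this discrepancy.
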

\begin{proof}
We calculate (cf.~\eqref{eq:fe'})
    \begin{align}\label{eq:derivative-fo}
         f_o'(s) = \int_{\partial \Omega_s} u_o \cdot (2\partial_s u_o +
         H_s u_o)\,\de\sigma_s
    \end{align}
and
    \begin{align}\label{eq:2nd derivative-fo}
        f_o''(s) &= \int_{\partial \Omega_s}  (2\partial_s u_o+H_s u_o)^2- 2(\partial_s u_o)^2 +2 u_o \partial^2_{s} u_o + (\partial_s H_s) u_o ^2\,\de\sigma_s .
    \end{align}
    By Cauchy-Schwarz inequality we have 
    \[(f_o')^2\leq f_o  \int_{\partial\Omega_s} (2\partial_s u_o+H_s u_o)^2\,\de\sigma_s. \]
Thus, by \eqref{eq:derivative-fo} and \eqref{eq:2nd derivative-fo}, after denoting
    \begin{align}\label{eq:def E}
        E(s) \coloneqq \int_{\partial \Omega_s} (\partial_s u_o)^2 - u_o \partial^2 _{s}u_o - \frac{\partial_s H_s}{2} u_o ^2\,\de\sigma_s,
    \end{align}
we see that
    \begin{align}\label{eq:fo inequality}
        f_o''-\frac{(f_o')^2}{f_o}\geq -2E(s).
    \end{align}
We claim that there is a constant $C(\Omega)>0$ such that when $s \in [0,\delta_0]$, 
        \begin{align}\label{eq:E bound}
            |E(s)| \leq  C \beta \int_{\Omega}  u^2\,\de x.
        \end{align}
\Cref{prop:uo-log-cvx} follows directly from \eqref{eq:fo inequality} and \eqref{eq:E bound}.

To prove \eqref{eq:E bound} we first show that
\begin{align} \label{eq:E0-bound}
    |E(0)| \leq C\beta \int_{\Omega}  u^2\,\de x .
\end{align}
Using the same operator $A(s) = \eta(s) \partial_s$ as in \Cref{prop:boundary laplacian estimates} and the fact that $(\Delta+\alpha) u_o = 0$, we have the following estimate for the first two terms in \eqref{eq:def E}:
\begin{align} \label{eq:bound-first-E0}
        \biggl|\int_{\partial \Omega} (\partial_s u_o)^2 &- u_o \partial^2 _{s}u_o\,\de\sigma\biggr| =
         \biggl|\int_{\partial \Omega} \partial_s u_o \cdot (Au_o) - u_o \cdot \partial_s (Au_o)\,\de\sigma  \biggr|
        \nonumber \\  &=  \biggl|-\int_\Omega  (\Delta+\alpha)u_o \cdot (Au_o) - u_o \cdot (\Delta+\alpha)(Au_o)\,\de x  \biggr|
        \nonumber \\  &=  \biggl|-\int_{\Omega} u_o \cdot [A,\Delta+\alpha]u_o\,\de x \biggr|
        \le \| u_o \|_{L^2} \cdot
        \bigl\| [A,\Delta+\alpha]u_o \bigr\| _{L^2}
        \\ & \le \| u \|_{L^2} \cdot
        C (\| \nabla u_o \|_{L^2} + \| \nabla ^2 u_o \|_{L^2})
        \nonumber \\ & \le \| u \|_{L^2} \cdot
        C (\| \nabla u \|_{L^2} + \| \nabla ^2 u \|_{L^2} + \frac{1}{\beta} \| \nabla ^3 u \|_{L^2} + \frac{1}{\beta} \| \nabla ^4 u \|_{L^2})
        \nonumber\\ & \le C \beta \| u \|^2_{L^2}
        ,\nonumber
\end{align}
where in the first inequality, we used the fact that $\bigl|[A,\Delta+\alpha]u_o\bigr| = \bigl|[A,\Delta]u_o\bigr| \leq C( |\nabla u_o| +|\nabla^2 u_o| ) $, and in the second inequality, we used the fact that $u_o = \frac{1}{\alpha+\beta}(\beta-\Delta) u $ together with \Cref{lem:w22} and \Cref{lem:w42}. For the third term in \eqref{eq:def E}, since $u=u_e+u_o=0$ on $\partial \Omega$, we have by \Cref{prop:boundary laplacian estimates}
\begin{align}\label{eq:bound-second-E0}
    \int_{\partial \Omega} |\partial_s H_0| u_o^2\,\de\sigma =
    \int_{\partial \Omega} |\partial_s H_0| u_e^2\,\de\sigma \leq
    C \int_{\Omega} u^2 \,\de x.
\end{align}
The estimate \eqref{eq:E0-bound} follows from ~\eqref{eq:bound-second-E0} and ~\eqref{eq:bound-first-E0}.

Next, we estimate $| E'(s)|$. We rewrite $E(s)$ as follows (see~\eqref{eqn:Laplace in Fermi}):
\begin{align*}
    E(s) & = \int_{\partial \Omega_s} (\partial_s u_o)^2 + H_s u_o\partial_s u_o + u_o\Delta_{g_s} u_o + \alpha u_o ^2 - \frac{\partial_s H_s}{2} u_o^2\,\de\sigma_s
    \\ & = \int_{\partial \Omega_s} (\partial_s u_o)^2 + H_s u_o \partial_s u_o - | \nabla_{g_s} u_o |^2 + \alpha u_o ^2 - \frac{\partial_s H_s}{2} u_o^2\,\de\sigma_s.
\end{align*}
We differentiate $E(s)$ using the first variation formula to get:
\begin{align}\label{eq:E derivative}
        E'(s) &= \int_{\partial\Omega_{s}}
        2(\partial_{s}u_{o})(\partial_{s}^{2}u_{o} + H_s \partial_s u_o + \alpha u_o) +
        H_{s}u_{o}(\partial_{s}^{2}u_{o} + H_s \partial_s u_o + \alpha u_o)
        \nonumber \\
        &-2 \langle \nabla_{g_{s}}u_{o},\nabla^{\mathbb{E}}_{\partial_{s}}\nabla_{g_{s}}u_{o} \rangle 
        -\frac{\partial_{s}^{2}H_{s}}{2}u_{o}^{2} 
        -H_{s}\left( |\nabla_{g_{s}}u_{o}|^{2} +
        \frac{\partial_{s}H_{s}}{2}u_{o}^{2}\right)\,\de\sigma_{s}
        \nonumber \\
        & =\int_{\partial\Omega_{s}}
        -2(\partial_{s}u_{o})\Delta_{g_s}u_o
        -u_{o}H_{s}\Delta_{g_s}u_o
        \nonumber \\
        &-2 \langle \nabla_{g_{s}}u_{o},\nabla^{\mathbb{E}}_{\partial_{s}}\nabla_{g_{s}}u_{o} \rangle 
        -\frac{\partial_{s}^{2}H_{s}}{2}u_{o}^{2} 
        -H_{s}\left( |\nabla_{g_{s}}u_{o}|^{2} +
        \frac{\partial_{s}H_{s}}{2}u_{o}^{2}\right)\,\de\sigma_{s}
        \\
        & =\int_{\partial\Omega_{s}}
        2 \langle [\nabla_{g_s}, \nabla^{\mathbb{E}}_{\partial_{s}}]u_{o},\nabla_{g_s}u_o \rangle
        + \langle \nabla_{g_s}(u_{o}H_{s}),\nabla_{g_s}u_o \rangle
        \nonumber \\
        &-\frac{\partial_{s}^{2}H_{s}}{2}u_{o}^{2} 
        -H_{s}\left( |\nabla_{g_{s}}u_{o}|^{2} +
        \frac{\partial_{s}H_{s}}{2}u_{o}^{2}\right)\,\de\sigma_{s}
        \nonumber \\
         & \stackrel{(*)}{=}\int_{\partial\Omega_{s}}
        2 \text{II}_s(\nabla_{g_s}u_o,\nabla_{g_s}u_o)
        + \langle \nabla_{g_s}(u_{o}H_{s}),\nabla_{g_s}u_o \rangle
        \nonumber \\
        &-\frac{\partial_{s}^{2}H_{s}}{2}u_{o}^{2} 
        -H_{s}\left( |\nabla_{g_{s}}u_{o}|^{2} +
        \frac{\partial_{s}H_{s}}{2}u_{o}^{2}\right)\,\de\sigma_{s} .
        \nonumber
\end{align}
Thus, 
    \begin{align*}
        |E'(s)| \leq C (\Omega)\int_{\partial \Omega_s} u_o ^2 + |\nabla u_o|^2\,\de\sigma_s.
    \end{align*}
In~$(*)$ we have used the following identity for any function $v$ and vector field $Y$ tangent to $\partial\Omega_s$.
\begin{align*}
    &\langle \nabla_{g_s} \partial_s v, Y\rangle -\langle \nabla^\bE_{\partial_s} \nabla_{g_s} v, Y\rangle= 
    Y\partial_s v - \partial_s\langle  \nabla_{g_s} v, Y\rangle+\langle \nabla_{g_s} v, \nabla^{\bE}_{\partial_s}Y\rangle\\
    &=[Y,\partial_s] v+
    \langle \nabla_{g_s} v, \nabla^{\bE}_Y \partial_s\rangle+\langle \nabla_{g_s} v, [\partial_s, Y]\rangle=\langle \nabla_{g_s} v, \nabla^{\bE}_Y \partial_s \rangle=\text{II}_s(\nabla_{g_s} v, Y).
\end{align*}
Then, integrating \eqref{eq:E derivative}, taking into account \eqref{eq:E0-bound}, we have
\begin{align*}
    |E(s)| &
    \leq |E(0)| + C \int_{\Omega} u_o ^2 + |\nabla u_o |^2 \,\de x
    \\
    &\leq C \beta \int_{\Omega} u^2 \,\de x + C \int_{\Omega} u^2 \,\de x + C\int_{\Omega} |\nabla u|^2 \,\de x+ \frac{1}{\beta^2} \int_{\Omega} |\nabla^3 u|^2 \,\de x
    \leq C \beta \int_{\Omega} u^2 \,\de x,
\end{align*}
where in the second inequality we used the fact that $u_o = \frac{1}{\alpha+\beta}(\beta-\Delta) u $, and in the last inequality, we used \Cref{lem:w22} and \Cref{lem:w42}.
\end{proof}

\Cref{thm:delocalization} follows from integrating the next delocalization estimate.
\begin{proposition}\label{prop:fo(s) bound}
There are positive constants $C= C(\Omega)$ and $\beta_0 = \beta_0(\Omega)$, such that if $\beta > \beta_0$, then when $s \in [0,\delta_0/2]$,
    \begin{align*}
        f_o(s)\leq C(1+\beta s^2) \int_{\Omega}  u^2 \,\de x .
    \end{align*}
\end{proposition}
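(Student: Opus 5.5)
Write $M=\int_\Omega u^2\,\de x$. The plan is to combine the log-convexity inequality of \Cref{prop:uo-log-cvx} with two pieces of input data: a bound on $f_o(0)$, and an averaged bound on $f_o$ over $[0,\delta_0]$.

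\emph{Data.} Since $u=0$ on $\partial\Omega$, we have $u_o=-u_e$ there. \Cref{prop:boundary laplacian estimates} then gives $f_o(0)=f_e(0)\le CM$. By the coarea formula in Fermi coordinates and \Cref{lem:orthogonal u v},
\[
\int_0^{\delta_0} f_o(t)\,\de t=\|u_o\|^2_{L^2(\Omega\setminus\Omega_{\delta_0})}\le M .
\]
Hence there is some $t_0\in[\delta_0/2,\delta_0]$ with $f_o(t_0)\le 2M/\delta_0$.

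\emph{Reduction to a differential inequality.} Set $K=C\beta M$ with $C$ from \Cref{prop:uo-log-cvx}. Where $f_o>0$, put $h=\sqrt{f_o}$. Then $f_o'=2hh'$ and $f_o''=2h'^2+2hh''$, so the inequality $(f_o')^2\le f_o(f_o''+K)$ becomes
\[
4h^2h'^2\le h^2\bigl(2h'^2+2hh''+K\bigr),
\qquad\text{i.e.}\qquad
2hh''\ge 2h'^2-K .
\]
Equivalently,
\[
f_o''\ge \frac{(f_o')^2}{f_o}-K\ge -K .
\]
The weak form $f_o''\ge -K$ already suffices, and it holds everywhere: where $f_o=0$, $f_o$ attains its minimum, and the original inequality forces $f_o'=0$ there. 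Consequently $F(s)=f_o(s)+\tfrac K2 s^2$ is convex on $[0,\delta_0]$.

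\emph{Conclusion.} For $s\in[0,t_0]$, convexity of $F$ gives
\[
F(s)\le \max\bigl(F(0),F(t_0)\bigr)\le CM+\tfrac{2}{\delta_0}M+\tfrac K2\delta_0^2 .
\]
The trouble is that this yields $f_o(s)\le C(1+\beta)M$, not $C(1+\beta s^2)M$. So the end-point data must be used more cleverly, and this is the main obstacle.

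My plan is to use a secant bound from $0$ together with a bound on $f_o'(0)$. Convexity of $F$ gives $F(s)\ge F(0)+F'(0)s$, which is the wrong direction for an upper bound. Instead, I would bound the slope $F'(t)$ for small $t$ using the averaged control. From
\[
F(s)\le F(0)+s\,F'(s)\quad\text{and}\quad F(t)\ge F(s)+F'(s)(t-s)\ \text{for } t\ge s,
\]
averaging the second inequality over $t\in[s,\delta_0]$ shows that if $F'(s)>0$ is large then $\int f_o$ would exceed $M$. This bounds
\[
F'(s)\le C\Bigl(M+K\delta_0+\tfrac{M}{\delta_0}\Bigr).
\]
However, the term $K\delta_0\sim\beta M$ again spoils the estimate, so this is still not enough.

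The refined route is to exploit the derivative at $0$ directly. By \eqref{eq:derivative-fo}, $f_o'(0)=\int_{\partial\Omega}u_o(2\partial_su_o+H_0u_o)\,\de\sigma$, and $\partial_s u_o=-\partial_s u_e$ on $\partial\Omega$. I would estimate $\int_{\partial\Omega}u_e\,\partial_s u_e\,\de\sigma$ via \eqref{eq:fe'}: $f_e'(0)\le0$, and $-f_e'(0)=2\int_\Omega(|\nabla u_e|^2+\beta u_e^2)-\int_{\partial\Omega}H_0u_e^2\lesssim \beta\|u_e\|^2_{L^2}+M\lesssim\sqrt\beta M$ by \Cref{thm:approx-and-decay} and the energy identity. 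Even so, I expect a bound $|f_o'(0)|\lesssim\sqrt\beta M$, which gives $f_o(s)\le f_o(0)+f_o'(0)s+\tfrac K2 s^2$ with $F$ in upper-bound form, where the upper bound comes from convexity between $0$ and the secant.

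To close, I would combine the two-sided control with the sharper log-convex form $2hh''\ge 2h'^2-K$ rather than just $f_o''\ge-K$. Concretely, I would compare $h$ with $\sqrt{A+Bs^2}$-type barriers solving the equality case, using $h(0)\lesssim\sqrt M$ and $\int h^2\le M$. The target is $h(s)^2\le C(M+Ks^2/\beta\cdot\beta)$, that is, $f_o(s)\le C(1+\beta s^2)M$, and the cross term $\sqrt\beta s\,M$ it produces is absorbed by AM--GM into $1+\beta s^2$.
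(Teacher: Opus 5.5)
There is a genuine gap: the proposal never turns its inputs into the estimate.

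\textbf{What is right.} You assemble exactly the inputs the paper uses. These are $f_o(0)=f_e(0)\le CM$, the averaged bound $\int_0^{\delta_0}f_o\,\de s\le M$, and the reformulation $2hh''\ge 2h'^2-K$ for $h=\sqrt{f_o}$, $K=C\beta M$.

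\textbf{Where it fails.}
\begin{enumerate}
\item Everything you actually carry out uses only the weak form $f_o''\ge -K$, and that cannot work. Take $f$ equal to $M$ except for a parabolic bump $M+Ka(s-s_1)-\frac K2(s-s_1)^2$ on $[s_1,s_1+2a]$. Choose $s_1=\beta^{-1/2}$ and $a\sim\beta^{-1/3}$ so that $\int_0^{\delta_0}f\,\de s\le M$. This $f$ satisfies $f''\ge -K$ (both kinks are convex), $f(0)=M$ and $f'(0)=0$. Yet $f(2\beta^{-1/2})\gtrsim\beta^{1/6}M$, far above $C(1+\beta s^2)M$.
\item The ``refined route'' has a sign error. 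Convexity of $F=f_o+\frac K2 s^2$ gives the tangent-line \emph{lower} bound $f_o(s)\ge f_o(0)+f_o'(0)s-\frac K2 s^2$. The upper bound $f_o(s)\le f_o(0)+f_o'(0)s+\frac K2 s^2$ would require $f_o''\le K$, which is not available. So controlling $f_o'(0)$ does not help. Your claimed bound $|f_o'(0)|\lesssim\sqrt\beta M$ is also not justified: Lemmas~\ref{lem:w22} and~\ref{lem:w42} only give $\|\nabla u_e\|_{L^2}^2\lesssim\beta M$.
\item The closing barrier step is only announced. The profiles $\sqrt{A+Bs^2}$ do not solve the equality case $2hh''=2h'^2-K$, except in the degenerate case $A=0$, $B=K/2$. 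No comparison principle is formulated.
\end{enumerate}

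\textbf{The missing idea} is a one-sided trapping argument for $h'$, which is what the paper does. If $h(s_0)>0$ and $h'(s_0)>\sqrt K$, then $2hh''\ge 2h'^2-K>K>0$, so $h''(s_0)>0$. By continuity, $h$ stays positive and $h'$ stays above $\sqrt K$ on all of $[s_0,\delta_0]$. If this happened for some $s_0\le\delta_0/2$, then $h(s)\ge\sqrt K\,(s-\delta_0/2)$ on $[\delta_0/2,\delta_0]$, and hence
\[
M\ge\int_{\delta_0/2}^{\delta_0}f_o\,\de s\ge\frac{K\delta_0^3}{24}=\frac{C\beta\delta_0^3}{24}\,M ,
\]
which is impossible once $\beta$ is large. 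So $h'\le\sqrt K$ on $[0,\delta_0/2]$ wherever $f_o>0$. Integrating from $0$, or from the last zero of $f_o$ before $s$, gives $\sqrt{f_o(s)}\le\sqrt{f_o(0)}+\sqrt K\,s\le C\sqrt M\,(1+\sqrt\beta\,s)$. Squaring gives the claim.

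Your slope-averaging argument for $F'$ had the right shape. On $F$, however, monotonicity of $F'$ only yields $f_o'(t)\ge f_o'(s)-K(t-s)$, which loses $K\delta_0\sim\beta M$. At the level of $\sqrt{f_o}$ the threshold $\sqrt K$ is preserved with no loss. That is exactly why the full log-convexity inequality of Proposition~\ref{prop:uo-log-cvx} is needed, not just $f_o''\ge -K$.
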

\begin{proof}
     To simplify expressions, assume  $\int_{\Omega}  u^2 \,\de x = 1$. When $s \in [0,\delta_0/2]$, if $f_o(s)=0$ then there is nothing to prove. If $f_o(s)>0$ then we claim that for the constant $C = C(\Omega)$ in \Cref{prop:uo-log-cvx}, 
        \begin{align}\label{eq:sqrt fo derivative bound}
            \left(\sqrt{f_o}\right)'(s) \leq \sqrt{C \beta}.
        \end{align}
    Otherwise, assume that there is an $s_0 \in [0, \delta_0/2]$, such that $f_o(s_0)>0$ and $\left(\sqrt{f_o}\right)'(s_0) > \sqrt{C \beta}$, or equivalently $f_o'(s_0) > 2 \sqrt{C\beta f_o(s_0)}$. We see that
    \begin{align*}
        \left(\sqrt{f_o}\right)''(s_0) &=
        \frac{1}{4 f_o(s_0) ^{3/2}}  \left( 2f_o''(s_0) f_o(s_0) - f_o'(s_0)^2\right)  
        \\
        \stackrel{\text{Prop. } \ref{prop:uo-log-cvx}}{\geq} &
        \frac{1}{4 f_o(s_0) ^{3/2}}  \left(f_o'(s_0)^2 - 2C\beta f_o(s_0)\right)
        \geq \frac{2C\beta f_o(s_0)}{4f_o(s_0) ^{3/2}} >0.
    \end{align*}
    Thus, a continuity argument implies that $f_o(s)>0$ and $\left(\sqrt{f_o}\right)'(s) > \sqrt{C \beta}$ for all $s\geq s_0$. Since $s_0 \in [0, \delta_0/2]$ we conclude that for any $s \in [\delta_0/2,\delta_0]$,
        \begin{align*}
            \sqrt{f_o(s)} \geq \sqrt{f_o(s)} - \sqrt{f_o(\delta_0/2)} =\int_{\delta_0/2} ^s \left(\sqrt{f_o}\right)'(t) \,\de t \geq \sqrt{C\beta}(s-\delta_0/2).
        \end{align*}
    Thus, 
    \begin{align}
        1 = \int_{\Omega}  u^2 \,\de x \geq \int_{ \Omega_{\delta_0/2}} u_o ^2 (x) \,\de x \geq \int_{\delta_0/2} ^{\delta_0} f_o(s)\, \de s \geq C\beta \int_{\delta_0/2} ^{\delta_0} (s-\delta_0/2)^2\, \de s = \frac{C \beta \delta_0 ^3}{24},
    \end{align}
which is a contradiction if $\beta$ is large. Hence, we obtain \eqref{eq:sqrt fo derivative bound}.

Then, for any $s\in[0,\delta_0/2]$ such that $f_o(s) >0$, if $f_o$ has no roots in $[0,s]$,
\begin{align*}
    \sqrt{f_o(s)} \overset{\eqref{eq:sqrt fo derivative bound}}{\leq} \sqrt{f_o(0)} + \sqrt{C\beta}s \overset{\text{Prop. } \ref{prop:boundary laplacian estimates}}{\leq} C(1+\sqrt{\beta}s),
\end{align*}
Otherwise, let $s_0$ be the root of $f_o$ closest to $s$ such that $s_0 < s$. Then
\begin{align*}
    \sqrt{f_o(s)} = \sqrt{f_o(s)} - \sqrt{f_o(s_0)} =
    \int_{s_0}^{s} \Bigl( \sqrt{f_o(t)} \Bigr)'\, \de t
    \overset{\eqref{eq:sqrt fo derivative bound}}{\leq} C\sqrt{\beta}s
\end{align*}
as required.
\end{proof}

\section{Proof of Theorem~\ref{thm:whispering gallery}: Boundary localization in balls}\label{sec:sharp localization rate}

In this section, we study the optimal boundary localization on the unit balls $\Omega = B_1 \subseteq \bR^d$ ($d \geq 2$). We aim to prove the following \Cref{thm:sharp localization on disk} which explicitly gives the function $\gamma(\eps)$ in \Cref{thm:whispering gallery}.
\begin{theorem}\label{thm:sharp localization on disk}
    Let $\Omega = B_1 \subseteq \bR^d$ ($d\geq 2$). There exists a sequence of $L^2$-normalized eigenfunctions $(u_{m})_{m=1} ^{\infty}$ with eigenvalues $(\lambda_m)_{m=1} ^{\infty}$, such that for any $a >0$,
    \begin{align}
        \varliminf_{m \to \infty} \int_{\Omega \setminus \Omega_{a \lambda_m ^{-1/6}}} u_{m} ^2 \,\de x \geq (1 - 400e^{-\frac{1}{4}a^{3/2}}).
    \end{align}
    Thus, $(u_{m})_{m=1} ^{\infty}$ concentrates near the boundary at the scale $\lambda_m ^{-1/6}$ as $m \to \infty$.
\end{theorem}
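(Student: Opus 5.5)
We will build whispering-gallery modes on the ball by separation of variables, which makes everything explicit. Take $\tau=0$ (or general $\tau$, treated the same way). Write $x=r\theta$ and look for eigenfunctions of the form
\[
u(r\theta)=R(r)Y_\ell(\theta),
\]
where $Y_\ell$ is a spherical harmonic of degree $\ell$ and $\nu=\ell+\frac{d-2}{2}$. By Lemma~\ref{lem:two eigenfunctions}, $u=u_o+u_e$ with $(\Delta+\alpha)u_o=0$ and $(\Delta-\beta)u_e=0$. Regularity at the origin forces
\[
R(r)=r^{-(d-2)/2}\bigl(A J_\nu(\sqrt{\alpha}\,r)+B I_\nu(\sqrt{\beta}\,r)\bigr).
\]
The clamped conditions $R(1)=R'(1)=0$ become the Wronskian-type equation
\[
\sqrt{\alpha}\,J_\nu'(\sqrt\alpha)\,I_\nu(\sqrt\beta)=\sqrt\beta\,J_\nu(\sqrt\alpha)\,I_\nu'(\sqrt\beta).
\]
For fixed $\ell$ this has an increasing sequence of roots $k=\sqrt\alpha=\lambda^{1/4}$ (with $\tau=0$). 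For the sequence $u_m$ we take $\ell=m$ and $k_m$ the \emph{first} root, so that $\nu_m\to\infty$ and $k_m$ is just above $\nu_m$.

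\textbf{Step 1: locate $k_m$.} Since $\sqrt\beta\sim\nu$, the ratio $I_\nu'(\sqrt\beta)/I_\nu(\sqrt\beta)$ is bounded by a constant. The equation then says that $J_\nu'(k)/J_\nu(k)=O(1)$, i.e.\ $k$ is within $O(1)$ of a zero of $J_\nu$ at the scale $\nu^{1/3}$. Using the Airy (Olver) uniform asymptotics
\[
J_\nu(\nu+t\nu^{1/3})\approx (2/\nu)^{1/3}\Airy(-2^{1/3}t),
\]
we get $k_m=\nu_m+c_0\nu_m^{1/3}+o(\nu_m^{1/3})$, where $-2^{1/3}c_0$ is essentially the first zero of $\Airy$. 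The correction is only $O(\nu^{-1/3})$ relative to the Airy scale, because the derivative term contributes $O(1)$ while $J_\nu'/J_\nu$ varies at rate $\nu^{1/3}$ near the zero. So $k_m-\nu_m\asymp\nu_m^{1/3}$.

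\textbf{Step 2: the $u_o$ mass lives in a layer of width $\lambda^{-1/6}$.} Near $r=1$ set $r=1-\rho$. Then $k r=\nu+(c_0\nu^{1/3}-\nu\rho)+\dots$. Writing $\rho=a\lambda^{-1/6}=a k^{-2/3}\approx a\nu^{-2/3}$, the argument of $J_\nu$ is $\nu+(c_0-a)\nu^{1/3}$. For $a>c_0$ we are in the evanescent region, where
\[
J_\nu(\nu-t\nu^{1/3})\lesssim \nu^{-1/3}\exp\bigl(-\tfrac{2\sqrt2}{3}t^{3/2}\bigr)
\]
(Debye/Airy bound $\Airy(x)\le e^{-\frac23x^{3/2}}$). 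Meanwhile the $L^2$ mass in the Airy layer $\rho\lesssim\nu^{-2/3}$ is of order $\nu^{-2/3}\cdot\nu^{-2/3}$. Comparing the integral $\int_0^{1-a\nu^{-2/3}} J_\nu(kr)^2r\,\de r$ with the total gives
\[
\frac{\|u_o\|^2_{L^2(\Omega_{a\lambda^{-1/6}})}}{\|u_o\|^2_{L^2(\Omega)}}\le C e^{-c a^{3/2}}
\]
for explicit constants. Making these explicit (for example with $c=\frac14$ after absorbing $2^{1/3}$ factors and polynomial prefactors, which is where a constant like $400$ comes from) is the bookkeeping step.

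\textbf{Step 3: handle $u_e$ and conclude.} By Theorem~\ref{thm:approx-and-decay}, $\|u_e\|^2_{L^2(\Omega)}\le C\beta^{-1/2}\to0$, so it does not affect the liminf. On the ball this can also be seen directly, since $I_\nu(\sqrt\beta r)/I_\nu(\sqrt\beta)$ decays on scale $\nu^{-1/2}$. By Lemma~\ref{lem:orthogonal u v}, $\|u\|^2=\|u_o\|^2+\|u_e\|^2$, so with $\|u_m\|=1$,
\[
\int_{\Omega_{s}}u_m^2\le 2\int_{\Omega_s}u_o^2+2\int_{\Omega_s}u_e^2\le 2Ce^{-ca^{3/2}}+o(1).
\]
This yields the stated liminf bound.

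The main obstacle is Step 2. It requires uniform-in-$\nu$ Bessel asymptotics in the transition region, with explicit constants, for both an upper bound of $J_\nu$ inside the turning point and a lower bound on the mass within $O(\nu^{-2/3})$ of the boundary, which is hard to control near the zero of $J_\nu$ itself. I would handle it by first writing $R$ via Olver's uniform expansion with error $O(\nu^{-2/3})$. I would then prove the lower bound on the region $\rho\in[0,M\nu^{-2/3}]$ by reducing to $\int \Airy(x)^2\,\de x$ over a fixed interval, and prove the interior exponential bound from a monotonicity/Riccati comparison for the Bessel ODE, $y''=(\nu^2/x^2-1)y$ up to lower-order terms, which gives decay $\exp\bigl(-\int\sqrt{\nu^2/x^2-1}\bigr)$ without needing full asymptotics.
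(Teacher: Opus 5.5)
Your proposal is correct in outline and follows the paper's architecture: split $u_m=u_{m,o}+u_{m,e}$, show $u_{m,o}$ has exponentially small mass in $B_{1-a\lambda_m^{-1/6}}$, discard $u_{m,e}$ via Theorem~\ref{thm:approx-and-decay}, and combine with $(x+y)^2\le 2x^2+2y^2$. Your use of the global bound $\|u_e\|^2_{L^2}\le C\beta^{-1/2}\to 0$ suffices for a $\liminf$ and is simpler than Proposition~\ref{prop:ume-decay}.

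\textbf{Where the routes differ.} The real difference is Step~2. You bound the Airy-layer mass from below by uniform Airy asymptotics, and the interior from above by a Riccati comparison. The paper needs neither.
Lommel's integral (Lemma~\ref{lem:u_mo-square-integral}) gives $\int_{B_s}u_{m,o}^2$ in closed form as $T_\mu(s)/(2J_\mu(w)^2w^2)$, where $T_\mu(s)=((ws)^2-\mu^2)J_\mu(ws)^2+(ws)^2J_\mu'(ws)^2$ and $w=w_{\mu,1}$.
\begin{itemize}
\item Since $T_\mu$ is increasing, the total mass is at least $T_\mu(\mu/w)=\mu^2J_\mu'(\mu)^2$. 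This is the mass inside the turning-point radius, controlled by the single value $J'_\mu(\mu)$ with $\Airy'(0)\neq 0$. So the difficulty you flag near the zero of $J_\nu$ never arises.
\item At $r_\mu=(\mu-(a-2)\mu^{1/3})/w$ the $J_\mu^2$-term is nonpositive, leaving $\mu^2J_\mu'(\mu-(a-2)\mu^{1/3})^2$.
\end{itemize}
The ratio is then $\approx\Airy'(2^{1/3}(a-2))^2/\Airy'(0)^2$. This uses only pointwise transition asymptotics at two fixed points and the crude bracket $\mu<w_{\mu,1}<\mu+2\mu^{1/3}$, and the explicit constants follow at once. Your route is heavier and leaves the constants unverified. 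In exchange it uses only the radial ODE and local asymptotics, not an exact Bessel identity.

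\textbf{Things to tighten.}
\begin{itemize}
\item In Step~1, $J_\nu'(k)/J_\nu(k)=O(1)$ does not by itself put $k$ near a zero. For $k\in[\nu/2,\nu-M\nu^{1/3}]$ one has $J_\nu'/J_\nu\approx\sqrt{\nu^2/k^2-1}=O(1)$ with no zero nearby. What rules out roots below $j_{\nu,1}$ is $J_\nu'/J_\nu<I_\nu'/I_\nu$ there, i.e.\ $W_\nu=J_\nu I_{\nu+1}+J_{\nu+1}I_\nu>0$ on $(0,j_{\nu,1}]$. The interlacing $j_{\nu,1}<k_m<j_{\nu+1,1}$ of Lemma~\ref{lem:roots-asymptotics} cleanly gives the bracket $\nu<k_m<\nu+2\nu^{1/3}$, which is all Step~2 needs.
\item For the constants: $400e^{-a^{3/2}/4}\ge 1$ unless $a\gtrsim 8.3$. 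For such $a$ your rate $e^{-\frac{4\sqrt2}{3}(a-c_0)^{3/2}}$, with $c_0\approx 1.856$, beats $e^{-a^{3/2}/4}$ by a wide margin. So the stated constant is attainable, but the prefactor, including $\bigl(\int_{\mathrm{layer}}\Airy^2\bigr)^{-1}$, must still be checked.
\item $I_\nu(kr)/I_\nu(k)$ decays on scale $\nu^{-1}$, not $\nu^{-1/2}$. This aside is not used.
\end{itemize}
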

To prove \Cref{thm:sharp localization on disk}, we need some basic facts on Bessel functions and we collect the following explicit descriptions of solutions to \eqref{eq:clamped-plate} in $\Omega = B_1 \subseteq \bR^d$.

For $\mu \geq 0$, let $J_\mu(s), I_\mu(s)$  denote the Bessel function and modified Bessel function of the first kind of order~$\mu$ respectively. 
For $d\geq 2$ set $D=d/2-1$. The ($d$-dimensional) spherical Bessel function and modified spherical Bessel function are  defined respectively by
    \begin{align*}
        \calJ_m (s) \coloneqq s^{-D} J_{m+D}(s) , \quad \calI_m(s)\coloneqq s^{-D} I_{m+D}(s) , \quad m \in \bN_{0}.
    \end{align*}
The Wronskian of $\calJ_m(s)$ and $\calI_m(s)$ is defined by
    \begin{align}
        \calW_m \coloneqq \calJ_m\calI_m' - \calJ_m'\calI_m=
     \calJ_{m} \calI_{m+1}+\calJ_{m+1}\calI_{m}
    \end{align}
    Observe that $\calW_m(s)=s^{-2D}W_{\mu}(s)$ with $\mu =  m+D$, where $W_\mu$ is the Wronskian of~$J_\mu$ and~$I_\mu$. In particular, $\calW_m$ and $W_{\mu}$ have the same roots.
We let $w_{\mu,1}$ denote the first positive root of~$W_\mu$. We consider a subsequence of solutions to Problem~\eqref{eq:clamped-plate} of the form  (see e.g.~\cite[ch. V\S6]{CH89})
    \begin{align}\label{e:sharp eigenfunction}
        u_m(x) \coloneqq \left[ \frac{\calJ_m(w_{\mu, 1} r)}{\calJ_m(w_{\mu, 1} )}  -  \frac{\calI_m(w_{\mu, 1} r)}{\calI_m(w_{\mu, 1} )} \right] Y_m (\theta),\quad m\in \bN_{0},
    \end{align}
where $x=r\theta$ with $\theta\in \SS^{d-1}$ and  $Y_m$ is any $L^2$-normalized  spherical harmonic of degree~$m$. The function~$u_m$ is an eigenfunction of eigenvalue $\lambda=w_{\mu, 1}^4$.
Note that
 $u_{m, o} = r^{-D}\frac{J_\mu(w_{\mu, 1} r)}{J_\mu(w_{\mu, 1} )}Y_m(\theta)$ and  $u_{m,e} = -r^{-D}\frac{I_\mu(w_{\mu, 1} r)}{I_\mu(w_{\mu, 1} )}Y_m(\theta)$.
The main proposition is the boundary localization of the oscillating part $u_{m, o}$.
\begin{proposition}
\label{prop:umo-localisation}
Fix any $a \geq 4$. For all~$m$ large enough depending on~$a$, one has that
     \begin{align*}
            \int_{B_{1-a\lambda^{-1/6}}} u_{m, o}^2\,\de x \leq 100 e^{-\frac{1}{2}a^{3/2}} \cdot \int_{B_1} u_{m, o}^2\,\de x.
        \end{align*}
\end{proposition}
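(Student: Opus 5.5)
The plan is to reduce to a one-dimensional Bessel integral and then use Airy-type asymptotics of $J_\mu$ near its turning point. Since $u_{m,o}=r^{-D}J_\mu(w r)Y_m(\theta)/J_\mu(w)$ with $w=w_{\mu,1}$ and $\|Y_m\|_{L^2(\SS^{d-1})}=1$, polar coordinates and $d-1-2D=1$ give
\begin{align*}
\int_{B_\rho} u_{m,o}^2\,\de x=\frac{1}{J_\mu(w)^2}\int_0^\rho J_\mu(wr)^2\,r\,\de r .
\end{align*}
Moreover $\lambda^{-1/6}=w^{-2/3}$. The statement therefore becomes
\begin{align*}
\int_0^{1-aw^{-2/3}} J_\mu(wr)^2\,r\,\de r\le 100e^{-\frac12 a^{3/2}}\int_0^1 J_\mu(wr)^2\,r\,\de r .
\end{align*}

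\textbf{Step 1: locate $w$.} I would show $w=\mu+c_\mu\mu^{1/3}$ with $c_\mu$ bounded between two absolute constants. A zero of $W_\mu$ requires $J_\mu'/J_\mu=I_\mu'/I_\mu>0$. On $(0,j_{\mu,1})$ this is impossible: there $J_\mu>0$, and either $J_\mu'\le 0$ (so $W_\mu>0$ trivially) or $J_\mu'/J_\mu$ falls below $I_\mu'/I_\mu$. For the latter I would use the Riccati equations for the logarithmic derivatives $y=J'/J$, $z=I'/I$. These are $y'=-y^2-y/s-(1-\mu^2/s^2)$ and $z'=-z^2-z/s+(1+\mu^2/s^2)$, and both quantities behave like $\mu/s$ as $s\to0$, so a comparison argument should give $y<z$. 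Hence $w>j_{\mu,1}\ge\mu+1.85\mu^{1/3}$. Just past $j_{\mu,1}$ the ratio $J_\mu'/J_\mu$ decreases from $+\infty$, while $I_\mu'/I_\mu=O(1)$, so $w$ lies before $j'_{\mu,2}$. Since $j'_{\mu,2}=\mu+O(\mu^{1/3})$, this gives the claimed form of $w$.

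\textbf{Step 2: lower bound for the full integral.} I would use the uniform (Olver) asymptotics
\begin{align*}
J_\mu(\mu+t\mu^{1/3})=(2/\mu)^{1/3}\Airy(-2^{1/3}t)\,(1+o(1)),
\end{align*}
uniformly for $t$ in compact sets. For $r\in[1-Kw^{-2/3},\,1-K^{-1}w^{-2/3}]$ the point $wr$ ranges over a window where $\Airy$ is of order one, so $\int_0^1J_\mu(wr)^2 r\,\de r\gtrsim \mu^{-2/3}\cdot\mu^{-2/3}$ with an explicit constant. To keep that constant honest, I would compute it directly from the Airy function integral $\int \Airy(-x)^2\,\de x$ over the relevant window.

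\textbf{Step 3: upper bound for the inner integral.} For $r\le 1-aw^{-2/3}$ we have $wr\le\mu-(a-c_\mu)\mu^{1/3}(1+o(1))$, which lies in the monotone, exponentially decaying regime of $J_\mu$. Here I would use the classical bound valid for $s<\mu$,
\begin{align*}
J_\mu(s)\le \frac{(s/\mu)^\mu e^{\mu\sqrt{1-s^2/\mu^2}}}{(1+\sqrt{1-s^2/\mu^2})^{\mu}},
\end{align*}
or equivalently the Airy majorant $J_\mu(\mu-t\mu^{1/3})\lesssim\mu^{-1/3}e^{-\frac{2}{3}(2^{1/3}t)^{3/2}}$. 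Integrating in $r$ gives a bound of the form $C\mu^{-4/3}e^{-\frac{4\sqrt2}{3}(a-c_\mu)^{3/2}}$.

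\textbf{Main obstacle.} The hard part is the explicit bookkeeping of constants. The shift $c_\mu\approx1.86$ must be absorbed, together with the ratio of the Step 3 constant to the Step 2 constant, into $100e^{-\frac12a^{3/2}}$ for all $a\ge4$. This requires $\frac{4\sqrt2}{3}(a-c)^{3/2}\ge\frac12a^{3/2}+\log(\text{ratio})-\log100$. The inequality has slack at $a=4$ because $(4\sqrt2/3)(2.14)^{3/2}\approx5.9>4$, and the slack grows with $a$. The step most likely to fail is Step 1. If the Riccati comparison is awkward, I would instead locate $w$ directly via the Airy asymptotics of $J_\mu'/J_\mu$ against $I_\mu'/I_\mu\to\sqrt2$, which only needs $m$ large depending on $a$.
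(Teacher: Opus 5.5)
Your route differs from the paper's, and Step~3 as written does not work. The bound you quote, $J_\mu(s)\le (s/\mu)^\mu e^{\mu\sqrt{1-s^2/\mu^2}}\bigl(1+\sqrt{1-s^2/\mu^2}\bigr)^{-\mu}$, is Kapteyn's inequality. It is not equivalent to the Airy majorant: at $s=\mu$ it only gives $J_\mu(\mu)\le 1$, whereas $J_\mu(\mu)\asymp\mu^{-1/3}$. Near the turning point it reproduces the factor $e^{-\frac{2\sqrt2}{3}t^{3/2}}$ but loses the prefactor $\mu^{-1/3}$. After squaring and integrating against $\de r\approx \mu^{-2/3}\,\de t$, it therefore bounds the inner integral only by $O(\mu^{-2/3})$. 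Your Step~2 lower bound for the full integral is of order $\mu^{-4/3}$, so the ratio you obtain is of size $\mu^{2/3}$ and the argument collapses.

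What you actually need is your second majorant, $J_\mu(\mu-t\mu^{1/3})\lesssim \mu^{-1/3}e^{-\frac{2\sqrt2}{3}t^{3/2}}$. It must hold \emph{uniformly} for all $t\in[a-c_\mu,\mu^{2/3}]$, i.e.\ on the whole range $0<wr\le w-aw^{1/3}$. This range is non-compact in $t$, so the fixed-$t$ transition asymptotics you use in Steps~1--2 do not supply it. You would need Olver's uniform expansion with explicit error bounds. Alternatively, use a Debye-type inequality with prefactor $(2\pi\mu)^{-1/2}(1-s^2/\mu^2)^{-1/4}$, and check that the Debye exponent dominates $\frac{2\sqrt2}{3}t^{3/2}$. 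With such an input your scheme goes through, and the constants have plenty of room. The claim is even trivial for $a$ up to about $4.39$, where $100e^{-a^{3/2}/2}\ge 1$.

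The paper avoids all of this with Lommel's integral, $\int_0^s J_\mu(t)^2t\,\de t=\frac12(s^2-\mu^2)J_\mu(s)^2+\frac12 s^2J_\mu'(s)^2$. This turns both radial integrals into values of $J_\mu,J_\mu'$ at single points.
\begin{itemize}
\item It takes $wr_\mu=\mu-(a-2)\mu^{1/3}$ and checks $B_{1-a\lambda^{-1/6}}\subset B_{r_\mu}$ using only $w<\mu+2\mu^{1/3}$.
\item It bounds the inner integral essentially by $\mu^2J_\mu'(wr_\mu)^2$; the $J_\mu^2$ term has a nonpositive coefficient since $wr_\mu<\mu$.
\item It bounds the full integral from below by monotonicity in $s$: $T_\mu(1)>T_\mu(\mu/w)=\mu^2J_\mu'(\mu)^2$.
\end{itemize}
Only the fixed-$t$ asymptotics of $J'_\mu$ at two points and the bound on $|\Airy'|$ are needed. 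There are no uniform estimates, no Airy window, and no integration in $t$. Your approach would be the natural one where no exact antiderivative is available, but here it requires a strictly harder analytic input.

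Finally, the step you flagged as risky is the easy one. The identity $W_\mu=J_\mu I_{\mu+1}+I_\mu J_{\mu+1}$ gives $W_\mu>0$ on $(0,j_{\mu,1}]$ and $W_\mu(j_{\mu+1,1})<0$. Hence $j_{\mu,1}<w<j_{\mu+1,1}$ and $c_\mu<2$ for large $\mu$. Your bound $w<j'_{\mu,2}$ only gives $c_\mu\lesssim 2.58$, not the value $1.86$ used in your arithmetic.
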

The exponential part does not contribute to localization, since it is exponentially small.
\begin{proposition}
\label{prop:ume-decay}
    Fix any $a\geq 1$. For all~$m$ large enough depending on $a$, one has that
    \begin{align*}
    \int_{B_{1-a\lambda^{-1/6}}} u_{m, e}^2\,\de x \leq e^{-\frac{1}{4}a\lambda^{1/12}}\int_{B_1} u_{m}^2\,\de x 
    \end{align*}
    where $\lambda=w_{\mu, 1}^4$ is the eigenvalue corresponding to~$u_m$.
\end{proposition}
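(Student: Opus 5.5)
We need to bound $\int_{B_{1-a\lambda^{-1/6}}} u_{m,e}^2$, where $u_{m,e}=-r^{-D}\frac{I_\mu(w r)}{I_\mu(w)}Y_m(\theta)$ with $w=w_{\mu,1}=\lambda^{1/4}$. The plan is to compare this with $\int_{B_1}u_{m,e}^2\le \int_{B_1}u_m^2$, which holds by Lemma~\ref{lem:orthogonal u v}. Since $Y_m$ is $L^2$-normalized on the sphere, polar coordinates reduce everything to one-dimensional radial integrals:
\begin{align*}
\int_{B_\rho} u_{m,e}^2\,\de x=\frac{1}{I_\mu(w)^2}\int_0^\rho r^{-2D}I_\mu(wr)^2 r^{d-1}\,\de r=\frac{1}{I_\mu(w)^2}\int_0^\rho r\,I_\mu(wr)^2\,\de r ,
\end{align*}
because $d-1-2D=1$. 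It therefore suffices to show that $\int_0^{\rho}rI_\mu(wr)^2\,\de r\le e^{-\frac14 a\lambda^{1/12}}\int_0^1 rI_\mu(wr)^2\,\de r$ for $\rho=1-a\lambda^{-1/6}$.

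The key input is a monotonicity and log-derivative bound for $I_\mu$. I would use that $t\mapsto I_\mu(t)$ is positive and increasing, and that $\frac{I_\mu'(t)}{I_\mu(t)}\ge \frac{t}{\mu+\sqrt{\mu^2+t^2}}$, which is a classical bound following from the Riccati equation for $y=tI_\mu'/I_\mu$, namely $ty'=t^2+\mu^2-y^2$, together with $y\ge\mu$. Setting $\phi(r)=\log I_\mu(wr)$, we get $\phi'(r)=w\,I_\mu'(wr)/I_\mu(wr)\ge \frac{w^2 r}{\mu+\sqrt{\mu^2+w^2r^2}}$. Next I need the relation between $\mu$ and $w$. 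Since $w_{\mu,1}$ is a root of $W_\mu$, it must lie beyond the turning point: $J_\mu$ and $J_\mu'$ are positive on $(0,j'_{\mu,1})$, so $W_\mu=J_\mu I_\mu'-J_\mu'I_\mu$ has sign forced by the positive ratio, and $w>\mu$. The standard asymptotics give $w=\mu+O(\mu^{1/3})$. So $w\sim\mu$, and for $r\in[1/2,1]$ the lower bound is $\phi'(r)\ge c\,w$ with $c$ close to $\frac{1}{1+\sqrt2}\approx0.41$ once $r$ is near $1$. Actually I only need $\phi'\ge w/4$ on $[\rho,1]$ when $m$ is large, which holds since $r\ge 1-o(1)$ there and $\frac{r}{1+\sqrt{1+r^2}}\to\frac1{1+\sqrt2}>\frac14$.

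With this in hand the estimate follows. For $r\le\rho$ monotonicity gives $I_\mu(wr)\le I_\mu(w\rho)$, so $\int_0^\rho rI_\mu(wr)^2\,\de r\le \tfrac12 I_\mu(w\rho)^2$. For $r\in[1-\tfrac12 a\lambda^{-1/6},1]$, the log-derivative bound gives $I_\mu(wr)\ge I_\mu(w\rho)\exp\bigl(\tfrac{w}{4}\cdot\tfrac12 a\lambda^{-1/6}\bigr)=I_\mu(w\rho)e^{\frac18 a\lambda^{1/12}}$, since $w\lambda^{-1/6}=\lambda^{1/12}$. Hence $\int_0^1 rI_\mu^2\ge \tfrac14\cdot\tfrac12 a\lambda^{-1/6}\,I_\mu(w\rho)^2e^{\frac14 a\lambda^{1/12}}$, and the ratio is at most $4a^{-1}\lambda^{1/6}e^{-\frac14 a\lambda^{1/12}}$. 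This loses a polynomial factor relative to the target. To absorb it, I would use a slightly better constant in the log-derivative, for example $\phi'\ge 0.4w$ near $r=1$, which gives the exponent $0.4\,a\lambda^{1/12}$. The surplus $e^{-0.15a\lambda^{1/12}}$ then kills $4a^{-1}\lambda^{1/6}$ for $m$ large, using $a\ge1$.

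The main obstacle is justifying the Bessel facts rigorously. These are the Riccati-based lower bound on $I_\mu'/I_\mu$, and the localization $w_{\mu,1}\in(\mu,2\mu)$, of which only $\mu\le w\le C\mu$ is needed. I would prove the latter directly: the sign argument gives $w>\mu$, and the known bound $j_{\mu,1}<\mu+C\mu^{1/3}$ combined with a sign change of $W_\mu$ on $(j'_{\mu,1},j_{\mu,1}]$ gives an upper bound. Only the lower bound $\phi'\ge 0.4w$, which needs $\mu\lesssim w$ and $r\approx1$, is essential.
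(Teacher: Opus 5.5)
Your argument is correct, but it takes a different route from the paper's. The paper proves this in one line. It applies the general estimate \eqref{ineq:main global} of \Cref{thm:approx-and-decay} with $\tau=0$, so that $\sqrt{\beta}=\lambda^{1/4}$, and with $s=a\lambda^{-1/6}$, which is $\le\delta$ once $m$ is large. This gives $\int_{B_{1-s}}u_{m,e}^2\le C\lambda^{-1/4}e^{-a\lambda^{1/12}}\int_{B_1}u_m^2$, which is stronger than the claim for large $\lambda$.

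You instead use the explicit radial form of $u_{m,e}$ and compare $\int_{B_\rho}u_{m,e}^2$ with $\int_{B_1}u_{m,e}^2$ directly, via monotonicity of $I_\mu$ and a lower bound on $(\log I_\mu(wr))'$. The polynomial loss $4a^{-1}\lambda^{1/6}$ is correctly absorbed once you use $\phi'\ge 0.4w$. That bound needs only $w_{\mu,1}>\mu$ and $\rho\to1$; the upper bound on $w$ you mention is not used.

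The trade-off is as follows. The paper's route costs nothing given the general theory and works in any $C^4$ domain. That theory, however, rests on the $H^4$ estimates, the Rellich-type boundary bound of \Cref{prop:boundary laplacian estimates}, and the ODE argument for $f_e$. Your route is self-contained and elementary for the ball. It even gives a relative bound $\int_{B_\rho}u_{m,e}^2\le\varepsilon\int_{B_1}u_{m,e}^2$, which never uses that $u_{m,e}$ is globally small. The price is reliance on specific Bessel-function facts.

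There are two small points of rigor.

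First, the inequality $I_\mu'(t)/I_\mu(t)\ge t/(\mu+\sqrt{\mu^2+t^2})$ is not a bound valid for all orders. At $\mu=0$ it would say $I_1\ge I_0$, which is false. Your Riccati comparison with $y\ge\mu$ does prove it for $\mu>2/3$. Since $y\ge\mu$, a first touching point with $g(t)=\sqrt{\mu^2+t^2}-\mu$ forces $R=\sqrt{\mu^2+t^2}\ge 2\mu$. There one has $t y'=2\mu R-\mu^2>R-\mu^2/R=t g'$, a contradiction. This range is all you need, since $\mu=m+D\to\infty$.

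Second, your sign argument for $w>\mu$ is not convincing as written, because two positive terms do not force the sign of $J_\mu I_\mu'-J_\mu'I_\mu$. Instead use $W_\mu=J_\mu I_{\mu+1}+J_{\mu+1}I_\mu>0$ on $(0,j_{\mu,1})$ together with $j_{\mu,1}>\mu$. This is exactly parts (a) and (c) of Lemma~\ref{lem:roots-asymptotics}.
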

\begin{proof}[Proof of~\Cref{prop:ume-decay}]
This proposition follows from the inequality~\eqref{ineq:main global} by setting $s = a \lambda^{-\frac{1}{6}}$.
\end{proof}

We first note that~\Cref{thm:sharp localization on disk} follows from~\Cref{prop:umo-localisation} and~\Cref{prop:ume-decay}. 
\begin{proof}[Proof of~\Cref{thm:sharp localization on disk}]
Write $s = 1-a\lambda^{-1/6}$.
From Propositions~\ref{prop:umo-localisation} and~\ref{prop:ume-decay}, we see that for all~$m$ large enough and satisfying $\lambda^{\frac{1}{12}} > a^{1/2}$, we have that
\begin{align*}
\int_{B_s} u_m^2\,\de x &\leq 2\int_{B_s} u_{m, o}^2\,\de x +  2 \int_{B_{s}} u_{m, e}^2\,\de x \\ 
&\leq 200 e^{-\frac{1}{4}a^{3/2}}\Bigl(\int_{B_1} u_{m, o}^2\,\de x + \int_{B_1} u_{m}^2\,\de x\Bigr)\leq 400 e^{-\frac{1}{4}a^{3/2}}\int_{B_1} u_{m}^2\,\de x\ .
\end{align*}
\end{proof}
To prove Proposition~\ref{prop:umo-localisation}, we need a few lemmas regarding $L^2$-integrals and asymptotics of Bessel functions at the transition regime.

\begin{lemma}\label{lem:u_mo-square-integral}
    For any $m \in \bN_0$ and $s \geq 0$,
        \begin{align*}
            \int_{B_s} u_{m, o}^2(x)\,\de x = \frac{\bigl((w_{\mu, 1} s)^2 - \mu^2 \bigr)  J_{\mu}(w_{\mu, 1} s)^2 + (w_{\mu, 1} s)^2 J_{\mu}'(w_{\mu, 1} s)^2}{2J_\mu(w_{\mu, 1}) ^{2} \cdot w_{\mu, 1} ^2 } . 
        \end{align*}
\end{lemma}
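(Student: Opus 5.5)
Since $u_{m,o}=r^{-D}\frac{J_\mu(w r)}{J_\mu(w)}Y_m(\theta)$ with $w=w_{\mu,1}$, $\mu=m+D$ and $\|Y_m\|_{L^2(\SS^{d-1})}=1$, we pass to polar coordinates $\de x=r^{d-1}\,\de r\,\de\theta$ and use $r^{-2D}r^{d-1}=r$ (because $2D=d-2$). This gives
\begin{align*}
\int_{B_s}u_{m,o}^2\,\de x=\frac{1}{J_\mu(w)^2}\int_0^s J_\mu(wr)^2\,r\,\de r=\frac{1}{J_\mu(w)^2\,w^2}\int_0^{ws} J_\mu(t)^2\,t\,\de t .
\end{align*}
The claim therefore reduces to the classical Lommel-type indefinite integral
\begin{align*}
\int_0^{z} t\,J_\mu(t)^2\,\de t=\frac12\Bigl[(z^2-\mu^2)J_\mu(z)^2+z^2J_\mu'(z)^2\Bigr],
\end{align*}
applied with $z=ws$.

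To prove this identity, differentiate the right-hand side $F(z)$:
\begin{align*}
F'(z)=zJ_\mu^2+(z^2-\mu^2)J_\mu J_\mu'+zJ_\mu'^2+z^2J_\mu'J_\mu'' .
\end{align*}
Bessel's equation gives $z^2J_\mu''=-zJ_\mu'-(z^2-\mu^2)J_\mu$. Hence
\begin{align*}
z^2J_\mu'J_\mu''=-zJ_\mu'^2-(z^2-\mu^2)J_\mu J_\mu',
\end{align*}
and after this substitution everything cancels except $F'(z)=zJ_\mu(z)^2$, which is the integrand.

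It remains to check $F(0)=0$. Near $0$ we have $J_\mu(t)\sim c\,t^\mu$. For $\mu>0$, both $(z^2-\mu^2)J_\mu^2\sim -\mu^2c^2z^{2\mu}$ and $z^2J_\mu'^2\sim\mu^2c^2z^{2\mu}$ tend to $0$. For $\mu=0$ (only possible when $d=2$, $m=0$), $F(0)=\frac12(0\cdot 1+0)=0$ directly. Integrating $F'$ from $0$ to $z$ then gives the identity.

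There is no real obstacle here. The one step needing care is the boundary term at $0$ in the case $\mu=0$, which is handled above. We also use that $J_\mu(w)\neq0$, which is implicit in the definition of $u_m$.
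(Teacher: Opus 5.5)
Your proposal is correct and follows the paper's route: polar coordinates with $r^{-2D}r^{d-1}=r$ and the substitution $t=w_{\mu,1}r$ reduce the claim to Lommel's integral $\int_0^z tJ_\mu(t)^2\,\de t=\frac12\bigl[(z^2-\mu^2)J_\mu(z)^2+z^2J_\mu'(z)^2\bigr]$. The paper simply cites this identity, whereas you verify it directly by differentiating and applying Bessel's equation, and you correctly check that the boundary term at $0$ vanishes.
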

\begin{proof}
This is equivalent to the following classical Lommel's integral (\cite[\S5.11]{W1995} or \cite[Lemma 4.1]{lin2026whispering}).
        \begin{align*}
            \int_0^s J_{\mu}^2 (t) t\,\de t =\frac{(s^2 - \mu^2)}{2}  J_{\mu}(s)^2 + \frac{s^2}{2}J_\mu'(s)^2.
        \end{align*}
\end{proof}

\begin{lemma}[Zeros comparison and estimates]
\label{lem:roots-asymptotics}
The following properties hold
\begin{enumerate}[label=
\textup{(\alph*)}]
\item\label{itm:jm-interlacing} {\textup{\cite[\S15.22]{W1995}}}
    For all $\mu \geq 0$, 
    \[\mu < j_{\mu, 1} <j_{\mu+1, 1}< j_{\mu,2}\] 
\item \label{itm:j-asymp} {\textup{\cite[\S10.21.40]{NIST-handbook}}}    For all $\mu$ large enough,
        \begin{align*}
            \mu+ 1.855 \cdot \mu^{\frac{1}{3}}< j_{\mu, 1} < \mu + 1.856 \cdot \mu^{\frac{1}{3}}.
        \end{align*}
\item  \label{itm:j-w-zeros}   For all $\mu\geq 0$, $j_{\mu, 1} <w_{\mu, 1} <j_{\mu+1, 1}$, 

\item \label{itm:w-asymp} For all $\mu$ large enough we have
        \begin{align*}
            \mu + 1.855 \cdot \mu^{\frac{1}{3}}< w_{\mu, 1} <  \mu + 2 \cdot \mu^{\frac{1}{3}}.
        \end{align*}
\end{enumerate}
    \end{lemma}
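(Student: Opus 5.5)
Parts \ref{itm:jm-interlacing} and \ref{itm:j-asymp} are classical and are quoted from the references, so only \ref{itm:j-w-zeros} and \ref{itm:w-asymp} need proof. Part \ref{itm:w-asymp} follows at once from \ref{itm:j-w-zeros} combined with \ref{itm:j-asymp} applied at orders $\mu$ and $\mu+1$. For the upper bound,
\[
w_{\mu,1}<j_{\mu+1,1}<\mu+1+1.856(\mu+1)^{1/3}<\mu+2\mu^{1/3}
\]
for $\mu$ large, since $1+1.856(\mu+1)^{1/3}-1.856\mu^{1/3}=1+o(1)$ while $0.144\mu^{1/3}\to\infty$. The lower bound is immediate from $w_{\mu,1}>j_{\mu,1}$. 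So the work is in \ref{itm:j-w-zeros}.

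For \ref{itm:j-w-zeros}, I use the Wronskian in the form $W_\mu=J_\mu I_{\mu+1}+J_{\mu+1}I_\mu$ (the unnormalized analogue of the identity defining $\calW_m$). This follows from $J_\mu'=\frac{\mu}{s}J_\mu-J_{\mu+1}$ and $I_\mu'=\frac{\mu}{s}I_\mu+I_{\mu+1}$, because the $\frac{\mu}{s}J_\mu I_\mu$ terms cancel. Recall that $I_\mu,I_{\mu+1}>0$ on $(0,\infty)$. Also $J_\mu,J_{\mu+1}>0$ on $(0,j_{\mu,1})$, since $j_{\mu,1}<j_{\mu+1,1}$ by \ref{itm:jm-interlacing}. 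Hence $W_\mu>0$ on $(0,j_{\mu,1})$. At $s=j_{\mu,1}$ we get $W_\mu=J_{\mu+1}(j_{\mu,1})I_\mu(j_{\mu,1})>0$, again because $j_{\mu,1}<j_{\mu+1,1}$. At $s=j_{\mu+1,1}$ we get $W_\mu=J_\mu(j_{\mu+1,1})I_{\mu+1}(j_{\mu+1,1})$. By interlacing, $j_{\mu,1}<j_{\mu+1,1}<j_{\mu,2}$, so $J_\mu<0$ there and $W_\mu(j_{\mu+1,1})<0$. The intermediate value theorem then gives a root of $W_\mu$ in $(j_{\mu,1},j_{\mu+1,1})$, and the positivity on $(0,j_{\mu,1}]$ shows this is the first positive root. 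This proves $j_{\mu,1}<w_{\mu,1}<j_{\mu+1,1}$.

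The only delicate point is the sign bookkeeping: one needs that $J_{\mu+1}$ is still positive at $j_{\mu,1}$ and that $J_\mu$ has exactly one sign change before $j_{\mu+1,1}$, which is precisely the interlacing in \ref{itm:jm-interlacing}. One should also note that the first positive root is well defined because $W_\mu(s)\sim c\,s^{2\mu}>0$ as $s\to0^+$. The asymptotic step in \ref{itm:w-asymp} is routine once "$\mu$ large enough" is allowed to absorb the constants.
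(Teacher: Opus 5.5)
Your proof is correct and follows the paper's argument: the identity $W_\mu=J_\mu I_{\mu+1}+J_{\mu+1}I_\mu$, the signs of $W_\mu$ at $j_{\mu,1}$ and at $j_{\mu+1,1}$ obtained from the interlacing in (a), and then (d) deduced from (b) and (c) at orders $\mu$ and $\mu+1$; you simply supply more detail than the paper does. One harmless slip: near $0$ one has $W_\mu(s)\sim c\,s^{2\mu+1}$ rather than $c\,s^{2\mu}$, but that remark is not needed, since your sign argument already gives $W_\mu>0$ on $(0,j_{\mu,1}]$.
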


\begin{proof}
    We prove Parts~\ref{itm:j-w-zeros} and~\ref{itm:w-asymp}. The inequality $j_{\mu, 1}<w_{\mu, 1}$ follows from Part~\ref{itm:jm-interlacing} and the formula 
    \[ W_{\mu}=J_{\mu} I_{\mu+1}+ I_\mu J_{\mu+1}.\]
To see that $w_{\mu, 1}<j_{\mu+1, 1}$ use the same formula and the fact that $j_{\mu+1, 1}<j_{\mu, 2}$ to conclude that $W_\mu(j_{\mu+1, 1})<0$. 
Part~\ref{itm:w-asymp} follows from Parts~\ref{itm:j-asymp},~\ref{itm:j-w-zeros} and 
the inequality $1+1.856(\mu+1)^{1/3}<2\mu^{1/3}$ which holds for all $\mu>500$.
\end{proof}

\begin{remark}\label{rem:limit expansion}
    It could be that one has the stronger asymptotic expansion  $w_{\mu, 1} = \mu + C \mu^{\frac{1}{3}} + o(\mu^{1/3})$, but we do not use it.
\end{remark}

We also note that 
if we set $r_\mu=(\mu-(a-2)\mu^{1/3})/w_{\mu, 1}$ we have
\begin{lemma} 
For all $\mu$ large enough
    \[r_\mu > 1- a \lambda^{-1/6}\]
\end{lemma}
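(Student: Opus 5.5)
We need $r_\mu > 1 - a\lambda^{-1/6}$, where $\lambda = w_{\mu,1}^4$, so that $\lambda^{-1/6} = w_{\mu,1}^{-2/3}$. Multiplying through by $w_{\mu,1}>0$, the claim is equivalent to
\begin{align*}
\mu-(a-2)\mu^{1/3} > w_{\mu,1} - a\, w_{\mu,1}^{1/3}.
\end{align*}
The plan is to bound the right-hand side from above using only \Cref{lem:roots-asymptotics}\ref{itm:w-asymp}.

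Write $h(t)=t-a t^{1/3}$. Since $h'(t)=1-\tfrac{a}{3}t^{-2/3}>0$ once $t>(a/3)^{3/2}$, $h$ is increasing for large $t$. By \Cref{lem:roots-asymptotics}\ref{itm:w-asymp} we have $\mu < w_{\mu,1} < \mu+2\mu^{1/3}$ for large $\mu$, so
\begin{align*}
h(w_{\mu,1}) < h(\mu+2\mu^{1/3}) = \mu+2\mu^{1/3}-a(\mu+2\mu^{1/3})^{1/3}.
\end{align*}
Because $w_{\mu,1}>\mu$, we also have $(\mu+2\mu^{1/3})^{1/3}\ge \mu^{1/3}$. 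This gives
\begin{align*}
h(w_{\mu,1}) < \mu+2\mu^{1/3}-a\mu^{1/3} = \mu-(a-2)\mu^{1/3}.
\end{align*}
That is exactly the desired strict inequality, valid for all $\mu$ large enough in terms of $a$ (needed both for \ref{itm:w-asymp} and for the monotonicity of $h$).

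An even simpler route avoids the monotonicity of $h$ altogether. Use $w_{\mu,1}^{1/3}>\mu^{1/3}$ directly in the term $-a\,w_{\mu,1}^{1/3}$ together with $w_{\mu,1}<\mu+2\mu^{1/3}$:
\begin{align*}
w_{\mu,1}-aw_{\mu,1}^{1/3} < \mu+2\mu^{1/3}-a\mu^{1/3}.
\end{align*}
This works for any $a>0$ and needs only large $\mu$ for \ref{itm:w-asymp}. There is no real obstacle. The only care required is translating $\lambda^{-1/6}$ into $w_{\mu,1}^{-2/3}$ and using both sides of \ref{itm:w-asymp} in the correct direction: the lower bound $w_{\mu,1}>\mu$ controls the cube-root term, and the upper bound $w_{\mu,1}<\mu+2\mu^{1/3}$ controls the linear term.
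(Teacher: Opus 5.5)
Your proposal is correct and is essentially the paper's argument: the paper writes $r_\mu = 1-\frac{w_{\mu,1}-\mu+(a-2)\mu^{1/3}}{w_{\mu,1}}$ and then applies the upper bound $w_{\mu,1}<\mu+2\mu^{1/3}$ followed by $\mu^{1/3}<w_{\mu,1}^{1/3}$. This is exactly your ``simpler route'' after clearing the denominator $w_{\mu,1}$, so the monotonicity of $h$ in your first route is not needed.
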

\begin{proof}
From \Cref{lem:roots-asymptotics}\ref{itm:w-asymp} we see 
    \begin{align*}
        r_\mu =1-\frac{w_{\mu,1}-\mu+(a-2)\mu^{1/3}}{w_{\mu,1}} >1-\frac{a\mu^{1/3}}{w_{\mu,1}} >1-a w_{\mu,1}^{-2/3}=1-a \lambda^{-\frac{1}{6}}.
    \end{align*}
\end{proof}

When considering the asymptotics of $J_\mu(w_{\mu, 1}r_\mu)$, the Airy function comes into play. We summarize the properties we need in the following lemma. 
\begin{lemma}
\label{lem:transition-asymptotics}
    For any fixed $a\geq 0$, as $\mu\to\infty$,
    \begin{enumerate}[label=\textup{(\alph*)}]
\item  \textup{\cite[\S10.19(iii)]{NIST-handbook}} $J_\mu(\mu-a\mu^{\frac{1}{3}}) = 2^\frac{1}{3} \mu^{-\frac{1}{3}} \Airy(2^{\frac{1}{3}}a) + O_a(\mu^{-1})$,
\item \textup{\cite[\S10.19(iii)]{NIST-handbook}} 
$J_\mu'(\mu-a\mu^{\frac{1}{3}}) = -2^\frac{2}{3} \mu^{-\frac{2}{3}} \Airy' (2^{\frac{1}{3}}a) + O_a(\mu^{-\frac{4}{3}})$,
\item \label{itm:airy 1} \textup{\cite[\S9.7(iii)]{NIST-handbook}} $\forall s>1$, $|\Airy'(s)|<s^{\frac{1}{4}}e^{-\frac{2}{3}s^{3/2}}$,
\item \label{itm:airy 2}\textup{\cite[\S9.2(ii)]{NIST-handbook}} $\Airy'(0)<-0.25$ .
\end{enumerate}        
\end{lemma}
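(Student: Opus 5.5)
The lemma just stated, \Cref{lem:transition-asymptotics}, is a collection of cited facts. The plan is therefore to check that each item is exactly what the cited sources give, with the normalization used here.

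\textbf{Parts (a) and (b).} These come from the uniform transition-region expansion of NIST \S10.19(iii). For $z$ fixed and $\mu\to\infty$ one has
\[
J_\mu(\mu+z\mu^{1/3}) \sim \frac{2^{1/3}}{\mu^{1/3}}\Airy(-2^{1/3}z)\sum_k \frac{P_k(z)}{\mu^{2k/3}} + \frac{2^{2/3}}{\mu}\Airy'(-2^{1/3}z)\sum_k\frac{Q_k(z)}{\mu^{2k/3}},
\]
with $P_0=1$ and $Q_0(z)=3z^2/10$. Substituting $z=-a$:
\begin{itemize}
\item The leading term is $2^{1/3}\mu^{-1/3}\Airy(2^{1/3}a)$.
\item The next corrections are of order $\mu^{-1}$, with constants depending continuously on $a$. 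This is the $O_a(\mu^{-1})$ error.
\end{itemize}
The derivative expansion is $J_\mu'(\mu+z\mu^{1/3})\sim -2^{2/3}\mu^{-2/3}\Airy'(-2^{1/3}z)(1+\dots)$ plus terms of order $\mu^{-4/3}$. Setting $z=-a$ again gives (b). Since $a$ is fixed, the non-uniformity of the expansion in $z$ causes no difficulty.

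\textbf{Part (c).} This follows from NIST \S9.7(iii), which gives the exponentially improved bound for $\Airy'$ on the positive axis. With $\zeta=\tfrac23 s^{3/2}$,
\[
\Airy'(s) = -\frac{s^{1/4}e^{-\zeta}}{2\sqrt\pi}\bigl(1+\eta(s)\bigr), \qquad |\eta(s)|\le \frac{7}{72\,\zeta}\exp\Bigl(\frac{7}{72\,\zeta}\Bigr).
\]
For $s>1$ we have $\zeta>2/3$, so $|1+\eta|\le 1+0.146\,e^{0.146}<1.2<2\sqrt\pi$. Hence $|\Airy'(s)|<s^{1/4}e^{-\frac23 s^{3/2}}$.

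\textbf{Part (d).} This is the explicit value
\[
\Airy'(0) = -\frac{1}{3^{1/3}\Gamma(1/3)} \approx -0.2588 < -0.25.
\]

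\textbf{Main obstacle.} The only point needing care is the error control in (a) and (b): confirming that the remainder is genuinely $O_a(\mu^{-1})$, respectively $O_a(\mu^{-4/3})$, and not merely $o(\mu^{-1/3})$. I would handle this by citing the asymptotic character of the NIST expansion, which holds with a remainder bounded by the first omitted term for bounded $z$.
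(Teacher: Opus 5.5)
Your proposal is correct and matches the paper's treatment: the paper gives no argument beyond citing DLMF \S10.19(iii), \S9.7(iii) and \S9.2(ii), and your checks are accurate, namely the normalizations in (a)--(b) (the transition expansions with $z=-a$) and the numerics in (c)--(d). One small precision: for real positive argument, \S9.7(iii) gives the sharper enveloping bound $0<\eta(s)<\frac{7}{72\zeta}$ (the remainder has the sign of, and is bounded by, the first neglected term), so your extra factor $\exp(\frac{7}{72\zeta})$ is unnecessary but harmless, and in (a)--(b) the remainders are $O_a$ of the first omitted terms rather than literally bounded by them.
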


We can now prove Proposition~\ref{prop:umo-localisation}.
\begin{proof}[Proof of~\Cref{{prop:umo-localisation}}]
     We show that for all $m$ large enough one has
     \begin{align*}
            \int_{B_{r_\mu}} u_{m, o}^2\,\de x \leq 100 e^{-\frac{1}{2}a^{3/2}} \cdot \int_{B_1} u_{m, o}^2\,\de x
        \end{align*}
        Denote by $T_\mu(s)$ the numerator in the expression
        of the right hand side in~\Cref{lem:u_mo-square-integral}. 
     Since $w_{\mu, 1}r_\mu <\mu$ and $w_{\mu, 1}^2r_{\mu}^2<\mu^2+a^2\mu^{2/3}$, we obtain by~\Cref{lem:transition-asymptotics}
        \begin{align*}
            T_{\mu}(r_\mu)
            \leq a^2\mu^{\frac{2}{3}} J_\mu(w_{\mu, 1}r_\mu)^2 + \mu^{2} J_\mu'(w_{\mu, 1}r_\mu)^2  =\mu^{\frac{2}{3}} 2^{\frac{4}{3}} \left(\Airy' (2^{\frac{1}{3}}(a-2)) \right)^2 + O_a(1).
        \end{align*}
On the other hand 
\begin{equation*}
    T_{\mu}(1)> T_{\mu}(\mu/w_{\mu, 1}) = \mu^2 J_{\mu}'(\mu)^2 = \mu^{\frac{2}{3}} 2^{\frac{4}{3}} \Airy'(0)^2 +O_a(1)
\end{equation*}
By \ref{itm:airy 1} and \ref{itm:airy 2} in \Cref{lem:transition-asymptotics}, we get from the preceding two inequalities that for all large~$\mu$
\[T_\mu(r_\mu)<20 (a-2)^{1/2} e^{-\frac{4\sqrt{2}}{3}(a-2)^{3/2}} T_\mu(1).\]
When $a \geq 4$, we have that $20(a-2)^{1/2}e^{-\frac{4\sqrt{2}}{3}(a-2)^{3/2}} \leq 100 e^{-\frac{1}{2}a^{3/2}}$. 
This finishes the proof for~\Cref{{prop:umo-localisation}}.
\end{proof}


\section{Proof of Theorem \ref{thm:delocalization on cylinder}: Delocalization in a Finite Flat Cylinder}
\label{sec:proof cylinder}

In this section, we show that there is no localization neither near the boundary nor inside the domain on the flat cylinder $\Sigma = \bS^1\times[-h, h]$. \Cref{thm:strong delocalization on cylinder2} below  explicitly gives the function $\delta(s)$ in \Cref{thm:delocalization on cylinder}. In accordance with  our notation~\eqref{eq:omegas} we let $\Sigma_{s h} = \bS^1\times[-(1-s)h, (1-s)h]$.

\begin{theorem}\label{thm:strong delocalization on cylinder2}
    Let $\Sigma = \bS^1\times[-h, h]$ be a cylinder of height $2h$ and radius $1$. Let $\{(u_{\lambda},\lambda)\}_{\lambda}$ be any family of normalized solution pairs to \eqref{eq:clamped-plate}. Then for any $s\in (0,1)$,
    \begin{align}
    0<\frac{1}{\pi}\bigl(\pi s - \sin(\pi s)\bigr)\leq \varliminf_{\lambda\to \infty} \|u_\lambda\|^2_{L^2(\Sigma_{(1-s)h})}\leq \varlimsup_{\lambda\to \infty} \|u_\lambda\|^2_{L^2(\Sigma_{(1-s)h})}\leq  \frac{1}{\pi}\bigl(\pi s + \sin(\pi s)\bigr)<1 .
    \end{align}
\end{theorem}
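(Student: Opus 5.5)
The plan is to separate variables and reduce everything to explicit one-dimensional computations. The operator $\Delta^2$ with clamped conditions commutes with rotations of $\bS^1$, so every eigenspace is spanned by functions $e^{ik\theta}\varphi(z)$, $k\in\mathbb{Z}$. The region $\Sigma_{(1-s)h}=\bS^1\times[-sh,sh]$ is rotation invariant. By Fourier orthogonality in $\theta$, both $\|u_\lambda\|^2_{L^2(\Sigma_{(1-s)h})}$ and $\|u_\lambda\|^2_{L^2(\Sigma)}$ are therefore sums over $k$ of the corresponding quantities for the single modes. Hence it suffices to prove the two bounds for a single mode $u=e^{ik\theta}\varphi(z)$, uniformly in $k$, with $\lambda\to\infty$.

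For such a mode the equation becomes $(-\partial_z^2+k^2)^2\varphi=\lambda\varphi$ with $\varphi=\varphi'=0$ at $z=\pm h$. Testing against $\varphi$ shows $\lambda> k^4$. So $\xi:=(\sqrt\lambda-k^2)^{1/2}>0$ and $\eta:=(\sqrt\lambda+k^2)^{1/2}\ge\lambda^{1/4}\to\infty$. The decomposition of Lemma~\ref{lem:two eigenfunctions} (here $\tau=0$, $\alpha=\beta=\sqrt\lambda$) reads $\varphi=\varphi_o+\varphi_e$ with $\varphi_o''=-\xi^2\varphi_o$ and $\varphi_e''=\eta^2\varphi_e$. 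Since the interval is symmetric, the eigenspace of each mode splits into even and odd parts, and the $L^2$ masses on symmetric sets add. So I treat two cases:
\begin{itemize}
\item even: $\varphi_o=A\cos(\xi z)$, $\varphi_e=B\cosh(\eta z)$;
\item odd: $\varphi_o=A\sin(\xi z)$, $\varphi_e=B\sinh(\eta z)$.
\end{itemize}
In the even case the clamped conditions give $\xi\tan(\xi h)=-\eta\tanh(\eta h)$. As $\eta\to\infty$ with $\xi\le\eta$, this forces $t:=\xi h$ to satisfy $t\ge\pi/2-o(1)$ uniformly. Similarly, in the odd case $\xi\cot(\xi h)=\eta\coth(\eta h)$, which forces $t\ge\pi-o(1)$.

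The exponential part is negligible. By Theorem~\ref{thm:approx-and-decay}, or directly from the explicit $\cosh$ profile, we have $\|u_e\|^2_{L^2(\Sigma)}\le C\lambda^{-1/4}$ with $C$ independent of $k$. Combined with Lemma~\ref{lem:orthogonal u v}, the ratio $\|u\|^2_{L^2(\Sigma_{(1-s)h})}$ differs from $\|u_o\|^2_{L^2(\Sigma_{(1-s)h})}/\|u_o\|^2_{L^2(\Sigma)}$ by $o(1)$ uniformly. The latter ratio is explicit:
\[
F_\pm(t)=\frac{s\pm \frac{\sin(2ts)}{2t}}{1\pm\frac{\sin(2t)}{2t}},
\]
with $+$ for the even case and $-$ for the odd case.

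The main step, and the obstacle I expect, is the elementary but delicate inequality
\[
s-\tfrac{1}{\pi}\sin(\pi s)\le F_\pm(t)\le s+\tfrac{1}{\pi}\sin(\pi s),
\]
valid for $t\ge\pi/2$ in the even case and $t\ge\pi$ in the odd case, up to an $o(1)$ slack from the approximate thresholds. At $t=\pi/2$ in the even case the denominator is $1$ and $F_+=s+\sin(\pi s)/\pi$, so the upper bound is attained; the lower bound should come from the extremal configuration at $t=\pi/2$, i.e.\ the odd-symmetry phase analogue. I would first try to write $F_\pm(t)-s=\dfrac{\pm\bigl(\sin(2ts)-s\sin(2t)\bigr)/(2t)}{1\pm\sin(2t)/(2t)}$. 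Then I would bound $|\sin(2ts)-s\sin(2t)|\le \ldots$ and use $|\sin(2t)/(2t)|\le 1/\pi$ for $t\ge\pi/2$ to control the denominator, checking the small-$t$ regime $t\in[\pi/2,\pi]$ by hand via monotonicity in $t$. For large $t$ the deviation is $O(1/t)$, so it suffices to consider bounded $t$. Positivity of $s-\sin(\pi s)/\pi$ and the bound $s+\sin(\pi s)/\pi<1$ for $s\in(0,1)$ are immediate from $\sin x<x$, applied to $x=\pi s$ and $x=\pi(1-s)$ respectively. Summing over modes and passing to $\liminf$ and $\limsup$ as $\lambda\to\infty$ then gives the theorem.
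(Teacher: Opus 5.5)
Your reductions are sound: Fourier and parity orthogonality reduce the problem to single modes, the exponential part is $O(\lambda^{-1/4})$, and $F_\pm(t)$ is the correct ratio for the oscillating part. The gap is in the main step. The inequality $s-\frac{1}{\pi}\sin(\pi s)\le F_\pm(t)\le s+\frac{1}{\pi}\sin(\pi s)$ is false on the range you propose, so no monotonicity check on $[\pi/2,\pi]$ can establish it. Write $G(t)=F_+(t)-s=N(t)/D(t)$ as in your formula. At $t=\pi/2$ one has $D=1$, $N=\sin(\pi s)/\pi$ and $D'=-2/\pi$, and a direct computation gives
\[
G'(\pi/2)=\frac{2s\,(1+\cos \pi s)}{\pi}>0 .
\]
So $F_+$ exceeds $s+\frac1\pi\sin(\pi s)$ just to the right of $\pi/2$, for every $s\in(0,1)$. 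For instance, $s=\tfrac12$ and $t=\tfrac{\pi}{2}+0.3$ give $F_+(t)\approx 0.89$, while $\tfrac12+\tfrac1\pi\approx 0.82$. The excess is a fixed amount, not an $o(1)$ slack. In particular $t=\pi/2$ is not a maximizer of $F_+$ on $[\pi/2,\infty)$; the bound is attained there only because $\sin 2t=0$.

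The missing idea is that you keep too little of the dispersion relation when you retain only $t\ge\pi/2-o(1)$. In the even case $\tan t=-(\eta/\xi)\tanh(\eta h)$ with $t=\xi h$. Your $O(1/t)$ bound handles $t\to\infty$. If instead $t$ stays bounded along a subsequence, then $\xi$ is bounded while $\eta\ge\lambda^{1/4}\to\infty$. Hence $\tan t\to-\infty$, and after a further subsequence $t\to \pi j-\pi/2$ for some fixed $j\in\bN$. In the odd case $\cot t=(\eta/\xi)\coth(\eta h)\to+\infty$ forces $t\to\pi j$. At these limit points $\sin 2t=0$, so $D\to1$ and $F_\pm(t)-s\to\pm\frac{\sin(n\pi s)}{n\pi}$ with $n=2j-1$ or $n=2j$. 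The inequality $|\sin(nx)|\le n|\sin x|$ then yields exactly the bounds $s\pm\frac1\pi\sin(\pi s)$. This is how the paper argues: it uses \Cref{lem:alpha beta asymptotics2}, i.e.\ $ha^-_{m,k}\to\pi k-\pi/2$ and $hb^-_{m,k}\to\pi k$ as $m\to\infty$, together with $|\sin(nx)/(nx)|\le|\sin x/x|$. With this replacement for your uniform-in-$t$ inequality, the rest of your outline goes through.
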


We first describe a basis of clamped plate eigenfunctions on the cylinder in the following \Cref{prop:cylinder eigenfunction form2}.
Let $f_{m,c},f_{m,s}:(m,\infty)\to \bR$ be defined by
\begin{align*}
    &f_{m,c}(x) := h(x^2-m^2)^{1/2}\tan\bigl( h(x^2-m^2)^{1/2} \bigr) +
    h(x^2+m^2)^{1/2}\tanh\bigl(h(x^2+m^2)^{1/2}\bigr)
    \\
    &f_{m,s}(x) := h(x^2-m^2)^{1/2}\cot\bigl(h(x^2-m^2)^{1/2}\bigr) -
    h(x^2+m^2)^{1/2}\coth\bigl(h(x^2+m^2)^{1/2}\bigr)
\end{align*} 
Denote by $\alpha_{m,k}$ and $\beta_{m,k}$ the $k$-th positive roots of $f_{m, c}$ and $f_{m, s}$,
respectively, and set
\begin{align*}
    a^{-}_{m, k}=(\alpha_{m,k}^2-m^2)^{1/2}, \quad\quad a^{+}_{m, k}=(\alpha_{m, k}^2+m^2)^{1/2}
    \\
    b^{-}_{m, k}=(\beta_{m,k}^2-m^2)^{1/2}, \quad\quad b^{+}_{m, k}=(\beta_{m, k}^2+m^2)^{1/2}.
\end{align*}
\begin{proposition}\label{prop:cylinder eigenfunction form2}
    For any $m \in \bN_0$ and $k \in \bN$, define
    \begin{align*}
        T_{m,k,c}(t) :=
        \frac{\cos(t a^{-}_{m,k})}{\cos(h a^{-}_{m,k})} - 
        \frac{\cosh(t a^{+}_{m,k})}{\cosh(h a^{+}_{m,k})}
        \quad and \quad
        T_{m,k,s}(t) :=
        \frac{\sin(t b^{-}_{m,k})}{\sin(h b^{-}_{m,k})} - 
        \frac{\sinh(t b^{+}_{m,k})}{\sinh(h b^{+}_{m,k})},
    \end{align*}
    for $t \in [-h,h]$. 
    Then,
        \begin{align}\label{eq:torus basis}
            \{ T_{m,k,s}\cos m\theta,\ T_{m, k, s}\sin m\theta,\ T_{m,k,c}\cos m\theta, \ T_{m, k, c}\sin m\theta\}_{m\in \bN_0, k\in \bN}
        \end{align}
        is an orthogonal basis of~$L^2(\Sigma)$ composed of clamped-plate eigenfunctions. The eigenvalue corresponding to $T_{m,k,c}\cos m\theta$ and $T_{m,k,c}\sin m\theta$ is $\alpha_{m,k}^4$ and the one corresponding to $T_{m, k, s}\cos m\theta$ and $T_{m ,k, s}\sin m\theta$ is $\beta_{m,k}^4$.
\end{proposition}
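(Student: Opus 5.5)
We separate variables on $\Sigma=\bS^1\times[-h,h]$ with coordinates $(\theta,t)$, and we expand any eigenfunction in Fourier modes in~$\theta$. Since $\Delta^2$ commutes with rotations and the boundary conditions $u=\partial_t u=0$ at $t=\pm h$ are rotation invariant, each eigenspace is invariant under $\partial_\theta$. Hence each eigenspace is spanned by functions of the form $T(t)\cos m\theta$ and $T(t)\sin m\theta$. For such a function the equation $\Delta^2u=\lambda u$ becomes the ODE
\[
(\partial_t^2-m^2)^2T=\lambda T, \qquad T(\pm h)=T'(\pm h)=0 .
\]
Writing $\lambda=x^4$ with $x>0$, the factorization $(\partial_t^2-m^2-x^2)(\partial_t^2-m^2+x^2)T=0$ gives the general solution as a combination of $\cos(t(x^2-m^2)^{1/2})$, $\sin(t(x^2-m^2)^{1/2})$, $\cosh(t(x^2+m^2)^{1/2})$ and $\sinh(t(x^2+m^2)^{1/2})$, provided $x>m$. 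We would first rule out $x\le m$: in that case all four exponentials are real, so $T$ has no oscillating part. Alternatively, $\lambda=\int|\Delta u|^2/\int u^2\ge \int(T''-m^2T)^2/\int T^2\ge m^4$, with equality impossible because of the boundary conditions. The reflection $t\mapsto -t$ commutes with the ODE and preserves the boundary conditions, so we may split into even and odd solutions.

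For even $T$ we write $T=A\cos(ta^-)+B\cosh(ta^+)$ with $a^\mp=(x^2\mp m^2)^{1/2}$. The conditions at $t=h$ give a $2\times2$ linear system. Its determinant vanishes exactly when $a^-\tan(ha^-)+a^+\tanh(ha^+)=0$, that is, when $f_{m,c}(x)=0$. Normalizing the coefficients yields $T_{m,k,c}$, whose derivative at $h$ vanishes precisely by this equation. Here one must check $\cos(ha^-)\ne0$; at such points $f_{m,c}$ has a pole, so no root occurs there. The odd case is identical with $\sin$ and $\sinh$ and leads to $f_{m,s}=0$, where $\sin(hb^-)\ne0$ for the same reason. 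This identifies all eigenfunctions of the given $\theta$-type, and the eigenvalue is $x^4$, so it equals $\alpha_{m,k}^4$ or $\beta_{m,k}^4$.

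It remains to prove orthogonality and completeness. For orthogonality, eigenfunctions with different Fourier modes or different parity are orthogonal by Fubini. Within a fixed mode and parity, distinct roots give distinct eigenvalues, so orthogonality follows from the self-adjointness of the clamped bi-Laplacian (integration by parts in $H^2_0$). For completeness, the clamped plate operator has compact resolvent, so its eigenfunctions form an orthonormal basis of $L^2(\Sigma)$. Each eigenspace decomposes into Fourier modes and parities, and we showed that every such component is a multiple of one of the listed functions. Thus the listed family spans every eigenspace and is therefore complete.

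The main obstacle is the exhaustiveness check. We must make sure that for each fixed $(m,\text{parity},x)$ the solution space of the boundary value problem is one-dimensional, so that no eigenfunction is missed and no multiplicity is lost. We would verify this by showing the $2\times2$ matrix has rank exactly $1$ at the roots, which holds because its entries $\cos(ha^-)$ and $\cosh(ha^+)$ are nonzero. A smaller point is to confirm that the roots exist and are indexed correctly. Each branch of $\tan$ (respectively $\cot$) contributes a root by the intermediate value theorem, but only the fact that the roots exhaust the eigenvalues is needed.
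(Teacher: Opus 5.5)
Your proposal is correct and is the standard separation-of-variables argument the paper invokes without details: Fourier decomposition in $\theta$, reduction to the clamped fourth-order ODE, the bound $\lambda>m^4$ forcing $x>m$, the even/odd split giving the determinant conditions $f_{m,c}=0$ and $f_{m,s}=0$ with one-dimensional solution spaces at the roots, and completeness from the compact resolvent of the clamped bi-Laplacian. Of your two arguments for excluding $x\le m$, only the energy identity $\int (T''-m^2T)^2=\int (T'')^2+2m^2\int (T')^2+m^4\int T^2$ is a proof; the ``no oscillating part'' remark is not.
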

\begin{proof}
This is a standard separation of variables argument.
\end{proof}
The following lemma will be used in the proof of Theorem \ref{thm:strong delocalization on cylinder2}. Its proof can be found at the end of this section.
\begin{lemma}\label{lem:alpha beta asymptotics2}
    Let $m \in \bN_0$. For any $k \in \bN$, we have:
    \begin{align}
        &\pi k - \pi/2< ha^{-}_{m,k} < \pi k \label{eq:range alpha2},
        \\
        &\underset{m\to \infty}{\lim} ha^{-}_{m,k} = \pi k - \pi/2 \label{eq:lim alpha2},
        \\
        &\pi k < hb^{-}_{m,k} < \pi k + \pi/4 \label{eq:range beta2},
        \\
        &\underset{m\to \infty}{\lim} hb^{-}_{m,k} = \pi k .\label{eq:lim beta2}
    \end{align}
In particular, for all $m$ and $k$ we have $\alpha_{m, k}\neq\beta_{m, k}$, and any two distinct basis eigenfunctions in~\eqref{eq:torus basis} with the same eigenvalue are orthogonal on $\Sigma_{sh}$ for any $s \in (0,1)$.
\end{lemma}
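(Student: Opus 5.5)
The plan is to analyse the roots of $f_{m,c}$ and $f_{m,s}$ through the variable $y = h(x^2-m^2)^{1/2}$, which maps $(m,\infty)$ increasingly onto $(0,\infty)$. Put $z = h(x^2+m^2)^{1/2} = (y^2+2h^2m^2)^{1/2} > y$. The equation $f_{m,c}=0$ then reads
\[
y\tan y = -z\tanh z ,
\]
and $f_{m,s}=0$ reads
\[
y\cot y = z\coth z .
\]
The right-hand sides are positive. Moreover $z\tanh z$ and $z\coth z$ are increasing in $y$ and tend to $+\infty$.

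For the cosine family, $y\tan y<0$ forces $y\in(\pi k-\pi/2,\pi k)$ for some $k$. On each such interval the function $y\tan y + z\tanh z$ is continuous. It tends to $-\infty$ at $(\pi k-\pi/2)^+$ and equals $z\tanh z>0$ at $y=\pi k$, so it has a root there. I will check that the root is unique, for instance by showing the derivative is positive: $(y\tan y)' = \tan y + y\sec^2 y$, and on this interval $y\sec^2 y \ge y(1+\tan^2 y)$ dominates $|\tan y|$ once $y\ge1$. For small $y$ I will handle the first interval directly, by monotonicity of $\tan$ together with an increasing positive term. Since on the intervals $(\pi k, \pi k+\pi/2)$ we have $y\tan y>0$, there are no roots there, so the $k$-th positive root lies in the $k$-th interval. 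This gives \eqref{eq:range alpha2}. As $m\to\infty$ with $k$ fixed, $y$ stays bounded while $z\ge \sqrt2 hm\to\infty$. Hence $y\tan y\to-\infty$ at the root, forcing $\tan y\to-\infty$ and $y\to(\pi k-\pi/2)^+$. This is \eqref{eq:lim alpha2}.

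For the sine family, $y\cot y = z\coth z > z > y$ forces $\cot y > 1$, i.e. $y\in(\pi k,\pi k+\pi/4)$ modulo the interval $(0,\pi/4)$. The latter is excluded because $y\cot y<1\le z\coth z$ for $y\in(0,\pi/4)$. On $(\pi k,\pi k+\pi/4)$ the function $y\cot y - z\coth z$ goes from $+\infty$ to $\pi k+\pi/4 - z\coth z<0$, since $z\coth z>z>y$. It is strictly decreasing, because $y\cot y$ is decreasing there and $z\coth z$ is increasing. So there is exactly one root per interval, giving \eqref{eq:range beta2}. The same blow-up argument as $m\to\infty$ forces $\cot y\to+\infty$, i.e. $y\to\pi k^+$, which is \eqref{eq:lim beta2}.

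For the final claims, $ha^-_{m,k}$ and $hb^-_{m,j}$ lie in disjoint intervals $(\pi k-\pi/2,\pi k)$ and $(\pi j,\pi j+\pi/4)$. Since $x\mapsto h(x^2-m^2)^{1/2}$ is injective, $\alpha_{m,k}\ne\beta_{m,j}$ for all $k,j$. Now consider two distinct basis functions with the same eigenvalue. If they carry different angular factors (different $m$, or $\cos$ versus $\sin$), they are orthogonal on every $\Sigma_{sh}$ by $\theta$-integration, since $\Sigma_{sh}$ is a product. If they have the same angular factor, equal eigenvalues with the same $m$ force the same root of the same family. This is because roots within a family are distinct and, by the above, $\alpha_{m,k}\ne\beta_{m,j}$; the two functions would then coincide, contradicting distinctness. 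One remaining case is a $c$-function and an $s$-function with different $m$ but equal eigenvalue; these are again orthogonal by angular integration. Alternatively, $T_{\cdot,c}$ is even and $T_{\cdot,s}$ is odd in $t$, and $\Sigma_{sh}$ is symmetric in $t$.

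The main obstacle is the uniqueness of the root in each interval for the cosine family. The monotonicity argument sketched above is the first thing to try. If it fails for small $y$, I would instead rewrite the equation as $\tan y = -z\tanh z/y$ and note that the right-hand side is negative, while $\tan$ is strictly increasing on the interval. Then I would need to argue that $z\tanh z/y$ is decreasing, or else use a direct sign-change count.
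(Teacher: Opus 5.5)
Your proposal is correct and follows essentially the same route as the paper. You substitute $y=h(x^2-m^2)^{1/2}$ and locate roots by the sign of $y\tan y$ (resp.\ $y\cot y$), getting existence and uniqueness from the endpoint blow-up, positivity at $y=\pi k$ and monotonicity; the limits come from $z\tanh z$ (resp.\ $z\coth z$) blowing up as $m\to\infty$, and orthogonality from the disjoint ranges plus orthogonality on $\bS^1$. Your parity remark for orthogonality is a valid extra shortcut, and you usefully write out the sine case the paper only calls ``similar''. Two harmless slips: the small-$y$ hedge is unnecessary, since $(y\tan y)'=(y+\tfrac12\sin 2y)/\cos^2 y>0$ for all $y>0$ (and in any case $y>\pi/2$ on every relevant interval); and for $m=0$ one has $z=y$ rather than $z>y$, but $z\coth z>z\ge y$ is all you actually use.
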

\subsection{Proof of Theorem \ref{thm:strong delocalization on cylinder2}}
Let $u_\lambda$ be a normalized solution to \eqref{eq:clamped-plate}. Then $u_\lambda$ is a linear combination of basis eigenfunctions in \eqref{eq:torus basis} all of  the same eigenvalue. By \Cref{lem:alpha beta asymptotics2}, these basis eigenfunctions are orthogonal on $\Sigma_{sh}$. Thus, it suffices to prove \Cref{thm:strong delocalization on cylinder2} when $u_\lambda$ is one of the basis eigenfunctions. In the following we consider the case $u_\lambda(t, \theta) = T_{m,k,c}(t)\cos m\theta$. The proofs of the other cases are similar.
Write
\begin{align} \label{eq:cylinder-norm}
    \frac{\norm{u_\lambda}^2_{L^2(\Sigma_{h(1-s)})}}
    {\norm{u_{\lambda}}^2_{L^2(\Sigma)}} =
    \frac{\int_{-sh}^{sh} \left(\cos(ta^{-}_{m,k}) - 
    \cos(ha^{-}_{m,k})\frac{\cosh(ta^{+}_{m,k})}{\cosh(ha^{+}_{m,k})} \right)^2 \,\de t}
    {\int_{-h}^{h} \left(\cos(ta^{-}_{m,k}) - 
    \cos(ha^{-}_{m,k})\frac{\cosh(ta^{+}_{m,k})}{\cosh(ha^{+}_{m,k})} \right)^2 \,\de t}
\end{align}
First observe that
\begin{align} \label{eq:lim-numerator}
    \varliminf_{\lambda \to \infty} & \int_{-sh}^{sh} \left(\cos(t a^{-}_{m,k}) - 
    \cos(h a^{-}_{m,k})\frac{\cosh(t a^{+}_{m,k})}{\cosh(h a^{+}_{m,k})} \right)^2 \,\de t =
    \\
    & \varliminf_{\lambda\to\infty} \int_{-sh}^{sh} \cos^2(ta_{m, k}^{-})\,\de t =
    sh + \varliminf_{\lambda \to \infty} \frac{\sin(2sh a^{-}_{m,k})}{2a_{m,k}^{-}} \nonumber
\end{align}
since $\frac{\cosh(ta_{m, k}^{+})}{\cosh{(ha_{m,k}^{+})}}\to 0$ (as $a_{m,k}^{+} \to \infty$) when $t \in (-h,h)$. Note:
\begin{enumerate}[label=(\roman*)]
    \item If $k \to \infty $ then by (\ref{eq:range alpha2}) $h a^{-}_{m,k} \to \infty$ and so $\frac{\sin(2sh a^{-}_{m,k})}{2a^{-}_{m,k}} \to 0$.
    \item \label{itm:partial-lim} For fixed~$k$ and $m \to \infty$, by (\ref{eq:lim alpha2}) $h a^{-}_{m,k} \to \pi k - \frac{\pi}{2}$. Hence, we get
    \begin{align*}
        \lim_{m \to \infty} \Big| \frac{\sin(2sh a^{-}_{m,k})}{2 a^{-}_{m,k}} \Big| =
        \Big| h \frac{\sin((2k-1)\pi s)}{(2k-1)\pi} \Big| \leq
        \Big|h \frac{\sin(\pi s)}{\pi} \Big|
    \end{align*}
    where we used the inequality:
    \begin{align*}
        \forall n\in\bN , \ \forall x\in\bR: \quad
        \Bigl|\frac{\sin (nx)}{nx}\Bigr|\leq\Bigl|\frac{\sin x}{x}\Bigr|.
    \end{align*}
\end{enumerate}
Also observe that the above reasoning with $s=1$ shows that the denominator in \eqref{eq:cylinder-norm} tends to $h$.
Hence, from \eqref{eq:lim-numerator} and \ref{itm:partial-lim} we conclude that
\begin{align*}
    \varliminf_{\lambda \to \infty} \|  u_{\lambda}\|^2_{L^2(\Sigma_{h(1-s)})} \geq
    \frac{1}{\pi}(\pi s - \sin(\pi s)) 
\end{align*}
The argument for the upper limit is essentially the same.
\begin{proof}[Proof of Lemma~\ref{lem:alpha beta asymptotics2}]
    Fix $m$. To prove (\ref{eq:range alpha2}), let $x^-=(x^2 - m^2)^{1/2}$ and $x^+=(x^2+m^2)^{1/2}$. At $hx^-=\pi k$ we have
    \begin{align*}
        f_{m,c}(x) &= hx^-\tan{hx^-} + hx^+\tanh{hx^+} >0
    \end{align*}
    On the other hand  $f_{m, c}(x)\to -\infty$ as $hx^{-}\searrow \pi k-\pi/2$. Since $f_{m, c}$ is monotonically increasing in the interval $(\pi k-\pi/2, \pi k+\pi/2)$ inequality~(\ref{eq:range alpha2}) follows.
    
    To prove (\ref{eq:lim alpha2}) write
    \begin{align*}
        ha_{m,k}^-\tan(h a_{m,k}^-)+h a_{m, k}^+\tanh h a_{m, k}^+ = 0
    \end{align*}
    As $m\to\infty$ we have $a_{m, k}^+\to\infty$ and so the second term tends to infinity. Hence, the first term tends to~$-\infty$.
    Since $h a_{m, k}^-$ lies in $(\pi k-\pi/2, \pi k+\pi/2)$ we conclude
    that $h a_{m,k}^- \to \pi k - \frac{\pi}{2}$.

    The proofs of (\ref{eq:range beta2}) and (\ref{eq:lim beta2}) are similar.

    Finally, the orthogonality of two different basis eigenfunctions in \eqref{eq:torus basis} with the same eigenvalue follows directly from \eqref{eq:range alpha2}, \eqref{eq:range beta2}, and their orthogonality on $\bS^1$.
\end{proof}







\bibliographystyle{abbrv} 
\bibliography{Bi_Laplacian.bib}

\end{document}